\newcommand{\F}{\mathbb{F}}
\newcommand{\Z}{\ensuremath{\mathbb{Z}}}
\newcommand{\cor}[1]{\mathcal{#1}}
\newcommand{\graffe}[1]{\{#1\}}
\newcommand{\gen}[1]{\langle #1\rangle}
\DeclareMathOperator{\Aut}{Aut} 
\DeclareMathOperator{\im}{Im} 
\DeclareMathOperator{\hc}{H} 
\DeclareMathOperator{\ZG}{Z}
\DeclareMathOperator{\Hom}{Hom}
\DeclareMathOperator{\id}{id}
\DeclareMathOperator{\Sym}{Sym} 
\DeclareMathOperator{\dG}{d} 
\DeclareMathOperator{\lL}{\ell L} 
\renewcommand{\L}{\mathrm{L}}
\DeclareMathOperator{\ext}{Ext}
\DeclareMathOperator{\hab}{H^2_{ab}}
\DeclareMathOperator{\Ext}{Ext}
\newtheorem{definition}{Definition}[section]
\newtheorem{lemma}[definition]{Lemma}
\newtheorem{theorem}[definition]{Theorem}
\newtheorem{proposition}[definition]{Proposition}
\newtheorem{corollary}[definition]{Corollary}
\newtheorem*{acknowledgements}{Acknowledgements}
\newtheorem{theoremA}{Theorem}
\newenvironment{ex}[1][]{\refstepcounter{definition}\par\medskip\noindent \textbf{Example~\thedefinition. #1} \rmfamily}{\medskip}
\newenvironment{remark}[1][]{\refstepcounter{definition}\par\medskip
	\noindent \textbf{Remark~\thedefinition. #1} \rmfamily}{\medskip}
\title{Orbits classifying extensions of prime power order groups}
\date{}
\author{Oihana Garaialde Oca\~na and Mima Stanojkovski \thanks{\noindent{\itshape 2010 Mathematics Subject Classification.}
		20D15, 20E22, 20J05, 20J06. 
		\noindent {\itshape Keywords.} 
		Cohomology of finite $p$-groups, group extensions, strong isomorphism, orbit sizes.
		\noindent The first author was  
supported by the Spanish Government project PID2020-117281GB-I00,  
partially by FEDER funds and, by the Basque Government project  
IT483-22.}}
\begin{document}

	\maketitle

	\begin{abstract}
		The strong isomorphism  classes of extensions of finite groups are parametrized by orbits of a prescribed action on the second cohomology group. We study these orbits in the case of extensions of a finite abelian $p$-group by a cyclic factor of order $p$. As an application, we compute the number and sizes of these orbits when the initial $p$-group is generated by at most $3$ elements. 
	\end{abstract}
	
	\thispagestyle{empty}

	\section{Introduction}

	An established way of constructing finite groups is via \emph{group extensions}. A group $E$ is said to be an \emph{extension} of a group $G$ by a group $N$ if there exists a short exact sequence of groups
	\begin{equation}\label{eq:SES}
		1\rightarrow N {\longrightarrow} E {\longrightarrow} G \rightarrow 1.
	\end{equation}
	Every finite group can be constructed inductively in this way by iterating extensions by simple (composition) factors. In particular, if $p$ is a prime number, then every finite $p$-group can be  realized 
	via consecutive extensions with kernel $N$ of order $p$ and, moreover, such extensions are \emph{central} (it is indeed well-known that non-trivial $p$-groups have non-trivial center). An extension like \eqref{eq:SES} is called central if $N$ is central in $E$ equivalently, if the action of $G$ on $N$ is trivial.
	Every group of order $p^n$ being a central extension of a group of order $p^{n-1}$ by $\F_p$, one could hope to \emph{classify $p$-groups by classifying extensions}.
	The famous $p$-group generation algorithm of Newman and O'Brien \cite{OBrien/90} builds upon a  structural refinement of this idea. 
	
	A challenging task in the framework of classifying groups via extensions is that of determining whether two extensions $E$ and $E'$ are isomorphic as groups, in symbols $E\cong E'$. Because of this, it is sometimes worth it to start by testing isomorphism in a slightly stronger form.
	Two group extensions 
	\[
	1\rightarrow N \overset{\iota}{\longrightarrow} E {\longrightarrow} G \rightarrow 1 \ \ \textup{ and } \ \ 1\rightarrow N \overset{\iota'}{\longrightarrow} E' {\longrightarrow} G \rightarrow 1
	\]
	of $G$ by $N$ are \emph{strongly isomorphic} (following \cite[Def.\ 17.20]{Fitting/38}), denoted $E\cong_s E'$, if there exists an isomorphism $\phi:E\rightarrow E'$ 
	inducing an isomorphism $\iota(N)\rightarrow\iota'(N)$. The extensions $E$ and $E'$ are \emph{equivalent}, denoted $E\sim E'$, if $\phi$ induces the identity on both $\iota(N)\rightarrow\iota'(N)$ and $G\rightarrow G$. In particular, it holds that
	\[
	E\sim E'\ \Longrightarrow\ E\cong_s E'\ \Longrightarrow\ E\cong E'
	\]
	which in a straightforward manner implies that
	\begin{equation}\label{eq:classes}\nonumber
		\#\graffe{\text{isomorphism classes}}\leq 
		\#\graffe{\text{strong isomorphism classes}}\leq 
		\#\graffe{\text{equivalence classes}}.
	\end{equation}
	The equivalence classes of extensions of $G$ by $N$ are in bijection with the elements of the \emph{second cohomology group} $\hc^2(G;N)$, while the strong isomorphism classes are parametrized by orbits of $A=\Aut(G)\times \Aut(N)$ on $\hc^2(G;N)$; cf.\ Theorem \ref{th orbits and strong exts}. If $C^2(G;N)$ denotes the collection of $2$-cocycles $G\times G\rightarrow N$ and composition in $\Aut(G)$ is taken from right to left (i.e.\ $\tau\circ\sigma(x)=\tau(\sigma(x))$), then the action of $A$ on $C^2(G;N)$ is defined from the following data:
	\begin{itemize}
		\item the right diagonal action of $\Aut(G)$ on $C^2(G;N)$ given by
		\[
		C^2(G;N)\times\Aut(G)\longrightarrow C^2(G;N), \quad (c,\sigma) \longmapsto ((x,y)\mapsto c(\sigma(x), \sigma(y))),
		\]
		\item the natural left action of $\Aut(N)$ on $C^2(G;N)$ given by 
		\[
		\Aut(N)\times C^2(G;N)\longrightarrow C^2(G;N), \quad (\lambda,c) \longmapsto ((x,y)\mapsto \lambda( c(x,y))).
		\]
	\end{itemize}
	The last actions respect coboundaries and therefore, if $\Aut(N)$ is abelian, we derive the following left action of $A$ on $\hc^2(G;N)$:
	\begin{equation}\nonumber
		A\longrightarrow\Sym(\hc^2(G;N)), \quad
		(\sigma,\lambda) \ \mapsto \ ([c]\mapsto [\lambda c \sigma^{-1}]),
	\end{equation}
	where $[c]$ denotes the cohomology class of $c$.
	The following result is a weaker version of \cite[Thm.\ $4.7$]{BescheEick}.  
	
	\begin{theoremA}
		\label{th orbits and strong exts}
		Let $p$ be a prime number, $G$ a finite group, and $N$ a trivial $\F_pG$-module. Then the set of strong isomorphism classes of extensions of $G$ by $N$ is in natural bijection with the collection of orbits of the action of $A$  on $\hc^2(G;N)$.
	\end{theoremA}
	
	\noindent
	We have decided to state the last result only in terms of central extensions, because those are the ones we will be concerned with. 
	The more general version from \cite{BescheEick} allows $N$ to be any $\F_p G$-module (actually the proof works for any $\Z G$-module) and parametrizes strong isomorphism classes in terms of an action of the \emph{compatible pairs} of $A$ (in our case, all elements of $A$). Compatible pairs were introduced in \cite{DJSRobinson81} in the context of computing automorphism groups of extensions. A version of Theorem \ref{th orbits and strong exts} for non-fixed module structure on $N$ can be found in \cite[Satz 1.2]{Laue/82}. 
	Many are the applications of Theorem \ref{th orbits and strong exts} in the literature: see for example \cite{BescheEick},\cite{EickOBrien/99},\cite{DietrichEick/05},\cite{DietrichEickFeichtenschlager/08},\cite{GrochowQiao/17}. 
	Moreover, results similar to Theorem \ref{th orbits and strong exts} are employed to count Lie algebras by extensions; see for instance \cite[Thm.\ 2]{LieAlgebras}. 
	
	Despite their relevance to the isomorphism problem for finite groups, not much is known about the sizes of the orbits from Theorem \ref{th orbits and strong exts}. In the present paper, we concern ourselves with the case in which $G$ is an abelian $p$-group and $N=\F_p$: our goal is to determine the orbits of the action of $\tilde{A}=\Aut(G)\times \F_p^*$ on $\hc^2(G;\F_p)$.  We remark that, under these last assumptions, the extensions parametrized by $\hc^2(G;\F_p)$ are abelian or with commutator subgroup of order $p$. The latter class of groups has been classified in
	\cite{SBlackburn} with respect to the group order and relies on the classification of bilinear forms. Our techniques are different and 
	the results are difficult to compare outside of small order cases. 
	Moreover, we hope that our approach can be generalized to the study of extensions where $N$ is cyclic or elementary abelian.
	
	\subsection{Summary of the main results}
	
	Let $p$ be an odd prime number and let $G$ be a finite abelian $p$-group. In this paper we are concerned with the orbits of the action of $\tilde{A}=\Aut(G)\times\F_p^*$ on $\hc^2(G;\F_p)$, where $\F_p$ is viewed as a trivial $\F_pG$-module. In this very case, such orbits parametrize the isomorphism classes of extensions of $G$ by $\F_p$, see Proposition~\ref{prop:SIC=IC}, and we determine them completely when $G$ is generated by at most $3$ elements. For a minimal generating set of larger size, we describe the orbits within a specific $\tilde{A}$-stable subset of $\hc^2(G;\F_p)$ as we now explain. 
	
	Under our assumptions, $\hc^2(G;\F_p)$ is an $\F_p$-vector space endowed with a map $$\cup: \Hom(G,\F_p)\times\Hom(G,\F_p)\rightarrow\hc^2(G;\F_p)$$ corresponding to the restriction of the \emph{cup product} in the full cohomology ring of $G$. A distinguished subspace of $\hc^2(G;\F_p)$ is $\Ext^1_{\Z_pG}(G,\F_p)$, which parametrizes the equivalence classes of abelian extensions of $G$ by $\F_p$ and, together with the $\F_p$-span of the image of $\cup$, figures in the following convenient decomposition as $\F_p\tilde{A}$-modules:
	$\hc^2(G;\F_p)=\Ext^1_{\Z_pG}(G,\F_p)\oplus \langle \im\cup \rangle$. 
	
	The $\tilde{A}$-stable subset we analyze is $\Ext_{\Z_pG}^1(G,\F_p)\times\im\cup$ and we do this ``projectively''.
	We write $V=G/pG$, $d=\dim_{\F_p}(V)$, and $\cor{G}(k,V)$ for the collection of subspaces of dimension $k$ of $V$. We show that there is a somewhat natural bijection of $\tilde{A}$-sets 
	$$
	\mathbb{P}\Ext^1_{\Z_pG}(G,\F_p)\times\mathbb{P}\im\cup\rightarrow \cor{G}(d-1,V)\times\cor{G}(d-2,V)
	$$ 
	which shifts the original problem to the determination of $\Aut(G)$-orbits of pairs of subgroups of $G$. Our main Theorem \ref{th:main} gives a combinatorial description of the $\tilde{A}$-orbits of $\Ext_{\Z_pG}^1(G,\F_p)\times\im\cup$ in terms of vectors of data parametrizing the $\tilde{A}$-orbits of such pairs and thus allows the computation of the orbit sizes.
	Moreover, this result yields a lower bound on the number of isomorphism types of extensions of $G$ by $\F_p$ and, specifically, 
	the number of isomorphism classes of extensions with centre of index at most $p^2$. 
	It is worth mentioning that the orbit sizes are, under our assumptions, given by vectors of polynomials in $p$. Though maybe not quite surprising given the ``low complexity'' of the groups we consider, this raises the question of whether this is always the case.
	
	We remark that our results also hold true for many $2$-groups; see Section \ref{subsec:assumptions}.

	\subsection{Assumptions and notation}\label{subsec:assumptions}
	In this section, we set the notation that will hold throughout the whole paper. 
	Let $p$ be a prime number and let $G$ be a finite abelian $p$-group, written in additive notation, of exponent $\exp(G)=p^n$ and with $\dG(G)=r+1\geq 1$, i.e.\ $G$ is $(r+1)$-generated but not $r$-generated. In particular, $G$ is non-trivial and $n\geq 1$.  Let, moreover, $C$ denote a cyclic group of order $p^{n+1}$ equipped with a trivial $G$-action.  For each subgroup $K$ of $G$ and nonnegative integer $m$, we write $K[m]$ for the $m$-th torsion subgroup of $K$, i.e.\ $K[m]=\graffe{x\in K \mid mx=0}$. 
	We now fix a decomposition of $G$ into cyclic summands. For this, we let 
	\begin{itemize}
		\item $t$ a positive integer,
		\item integers $1\leq n_1\leq n_2\leq \dots \leq n_t=n$,
		\item integers $1\leq r_1, \dots, r_t$ such that $r+1=r_1+\dots +r_t$,
		\item for each $j\in \{1, \dots,t\}$ and $k\in\{1, \dots,r_j\}$, a cyclic group $I_{jk}$ of order $p^{n_j}$, 
		\item for each $j\in\graffe{1,\ldots,t}$, a free $\Z/(p^{n_j})$-module $I_j$ of rank $r_j$,
	\end{itemize}
	be such that
	\begin{equation}\nonumber
		G= \bigoplus_{j=1}^{t} I_j = \bigoplus_{j=1}^{t} \bigoplus_{k=1}^{r_j} I_{jk} \textup{ with } I_j=\bigoplus_{k=1}^{r_j} I_{jk}.
	\end{equation}
	We additionally assume that, if $p=2$, then $n_1>1$ holds in the above decomposition, equivalently $G$ does not admit cyclic factors of order $2$: the reason for this choice is clarified in Remark \ref{rmk:p=2case}. 
	
	We fix generators $\gamma_{jk}$ of $I_{jk}$ and $\tilde{\gamma}$ of $C$. We denote by $\gamma$ the image of $\tilde{\gamma}$ under the natural projection $C\rightarrow C/p^nC$, and so $\gamma$ generates $C/p^nC$.  
	Set, moreover, $V=G/pG$ and denote by $\pi$ the natural projection $G\rightarrow V$. For each $j\in \{1, \dots,t\}$ and $k\in\{1, \dots,r_j\}$, we write $v_{jk}=\pi(\gamma_{jk})$ and observe that, as a consequence of their definition, the $v_{jk}$'s form a basis of $V$. Denote by $\widehat{V}=\Hom(V,\F_p)$ the dual of $V$ of which a basis is given by the homomorphisms $v_{jk}^*: V\to \F_p$ satisfying
	\[
	v_{jk}^*(v_{hl})=\delta_{(j,k),(h,l)}=\begin{cases}
		1 & \textup{if } (j,k)=(h,l),\\
		0 & \textup{otherwise.}
	\end{cases}
	\]
	Let $\phi_1:V\rightarrow \widehat{V}$ denote the isomorphism of vector spaces defined by $v_{jk} \mapsto v_{jk}^*$. Write $\Aut(G)$ for the automorphism group of $G$ and, for each $\sigma\in\Aut(G)$, denote by $\overline{\sigma}$ the element of $\Aut(V)$ that is induced by $\sigma$. 
	Write $\Z_p$ for the ring of $p$-adic integers, $\Z_p^*$ for its group of units, and set $A=\Aut(G)\times \Z_p^*$. Denote by $\F_p$ the field of $p$ elements, considered as a trivial $\Z_pG$-module, and by $\F_p^*$ its group of units. We define a series of left actions of $A$ on sets associated to $G$.
	For $K$ a finite $\Z_pG$-module, the group $A$ acts on
	\begin{itemize}
		\item $\hc^m(G;K)$ via the map
		\begin{equation}\label{eq:ActionAonHomToK}
			A\longrightarrow\Sym(\hc^m(G;K))\textup{ defined by } (\sigma,\lambda) \ \mapsto \ ([c]\mapsto \lambda [c]\sigma^{-1}=[\lambda c \sigma^{-1}]);
		\end{equation}
		\item $V$ via the map
		\begin{equation}\label{eq:ActionAonV}
			A\longrightarrow\Aut(V)\textup{ defined by } (\sigma,\lambda) \ \mapsto \ (v\mapsto \lambda \overline{\sigma}(v));
		\end{equation}
		\item the collection $\cor{S}_G$ of subgroups of $G$ via
		\begin{equation}\label{eq:ActionAonSbgs}
			A\rightarrow\Sym(\cor{S}_G) \textup{ defined by } (\sigma,\lambda)\mapsto (H\mapsto \sigma(H));
		\end{equation} 
		\item $\mathbb{P}\hc^2(G;K)$ via the map
		\begin{equation}\label{eq:ActionAonPH^2}
			A\longrightarrow\Sym(\mathbb{P}\hc^2(G;K))\textup{ defined by } (\sigma,\lambda) \ \mapsto \ ([c]\mapsto  [c\sigma^{-1}]).
		\end{equation}
	\end{itemize}

	If two objects $X$ and $Y$ 
	belong to the same $A$-orbit, we write $X\sim_A Y$. We write $A_X$ meaning the stabilizer of $X$ in $A$ and, if two elements $Y,Z$ are in the same orbit under the induced action by $A_X$, we write $Y\sim_{A_X}Z$. To lighten the notation, if $X=[c]\in\hc^2(G;\F_p)$, we write $A_c$ instead of $A_{[c]}$.

	\subsection{Organization and strategy}
	
	We describe here briefly the internal structure of this article and the strategy behind the proofs of our main results.
	
	In Section \ref{sec:HAlgebra}, we briefly describe the cohomological objects we will be dealing with and list a number of their properties; we also provide more detailed references for the interested reader. We show in Section \ref{subsec:bockstein} that the abelian extensions of $G$ are parametrized by the elements in the image of the \emph{higher order Bockstein homomorphism}. In Sections \ref{subsec:maximalsubgroups} and \ref{subsec:plucker}, we give two correspondences involving respectively $\Ext^1_{\Z_pG}(G,\F_p)$ and the image of the cup product $\cup:\Hom(G,\F_p)\times\Hom(G,\F_p)\rightarrow \hc^2(G;\F_p)$ and interpret the $A$-orbits thereof in terms of orbits of subspaces of $V$.
	
	In Section \ref{sec:generalresults}, we define the numbers that will allow us to describe the $A$-orbits in $\hc^2(G;\F_p)$ combinatorially and prove some compatibility results regarding the correspondences defined in the previous section. Such numbers are called the \emph{levels} of the pairs of subgroups associated to a given element of $\Ext_{\Z_pG}^1(G,\F_p)\times\im\cup$ and tell us how the two subgroups ``relatively sit in $G$''. 
	
	Section \ref{section AbelianExtensions} is devoted to the analysis of the action of $A$ on $\Ext^1_{\Z_pG}(G,\F_p)$. Here we heavily rely on the properties of the Bockstein homomorphism and the equivalence relation it induces on $\Hom(G,C/(p^nC))$, which we name the \emph{Bockquivalence relation}. Roughly speaking, the Bockstein homomorphism   controls the map $x\mapsto x^p$ on the extensions of $G$ by $\F_p$. In this section, we also show that in fact the strong isomorphism classes coincide with the isomorphism classes of extensions of $G$ by $\F_p$.  
	
	In Section \ref{sec:stabilizers}, we describe the orbits of $\im\cup$ under the action of $A_c$, where $[c]$ denotes an element in $\Ext^1_{\Z_pG}(G,\F_p)$. We do this by separating the cases according to the value of the \emph{$c$-index}, which we defined in Section \ref{sec:generalresults}.
	
	Section \ref{sec:apps} collects our main result, applications of it, and some closing remarks. In Section \ref{subsec:main}, we give and prove our Main Theorem \ref{th:main} combining the efforts from Sections \ref{section AbelianExtensions} and \ref{sec:stabilizers}. In Sections \ref{subsec:2gen} and \ref{subsec:3gen} we give respectively the orbit counts for $2$-generated and $3$-generated groups. In Section \ref{subsec:moregens}, we give an example and ideas for future work.

	\begin{acknowledgements}
		The authors are very thankful to Bettina Eick for helpful feedback and clarifications regarding this project. The authors also wish to thank the universities of Bielefeld and D\"usseldorf together with the Max-Planck-Institute for Mathematics in the Sciences (in particular the groups of Christopher Voll, Benjamin Klopsch, and Bernd Sturmfels), where part of this collaboration took place. We also wish to thank Jon Gonz\' alez-S\' anchez for useful suggestions regarding this manuscript. We thank the anonymous referee for their comments, which led to an improvement in the exposition of this paper. 
	\end{acknowledgements}

	\section{Homological algebra}\label{sec:HAlgebra}
	
	The aim of this section is to set the notation that will be used in the next sections and to shortly describe the objects we will be working with. For reasons of brevity, we work under the assumptions listed in Section \ref{subsec:assumptions};
	for a more general view on the topic, we refer the reader to \cite{KSBrown}, \cite{LEvens}, \cite{CAWeibel}.

	\subsection{Cohomology of groups}\label{subsec:cohomologyofgroups}

	Throughout we suppose that, for $n\geq 0$, the $n$-th cohomology group $\hc^n(G;\F_p)$ of $G$ with coefficients in $\F_p$ is computed by applying the left-exact functor $\Hom_{\F_pG}(\cdot, \F_p)$ to the standard or bar resolution $B_n(G;\Z)$ of $\Z$, i.e.\ $\hc^n(G;\F_p)=\hc^n(C^m(G;\F_p), \partial_m)$, where
	\[
	C^m(G;\F_p)=\{f: G^m=\underbrace{G\times \dots \times G}_{m \textup{ times}}\to \F_p \; \text{functions}\}
	\]
	and
	$
	\partial_m: C^m(G;\F_p)\to C^{m+1}(G;\F_p)
	$
	is defined by sending $f \in C^m(G;\F_p)$ to
	\begin{align*}
		\partial_m(f)(g_1, \dots,g_{m+1})=&f(g_2, \dots, g_{m+1})+ \sum_{i=1}^{m}(-1)^i f(g_1, \dots, g_i+g_{i+1}, \dots, g_{m+1})\\
		&+(-1)^{m+1}f(g_1, \dots, g_m).
	\end{align*}
	If $f\in C^m(G;\F_p)$, we say that $f$ has degree $m$,  written $|f|=m$, and we denote by $[f]$ its cohomology class in $H^m(G;\F_p)$. The cohomology group of $G$ with coefficients in $\F_p$ is the graded abelian group $$\hc^*(G;\F_p)=\bigoplus_{n\geq 0} \hc^n(G;\F_p).$$ For all integers $n,m\geq 0$, the cup product $\cup: C^n(G;\F_p)\times C^{m}(G;\F_p)\to C^{n+m}(G;\F_p)$ is defined by sending the pair
	$(c,d)\in C^{n}(G;\F_p)\times C^m(G;\F_p)$ to the map $G^n\times G^m\rightarrow\F_p$ that is given by
	$$(x,y)\longmapsto (c\cup d)(x,y)=c(x)d(y).$$  
	By slight abuse of notation, we also write $\cup$ for the induced cup product in cohomology $$\cup:\hc^n(G;\F_p)\times \hc^m(G;\F_p)\longrightarrow \hc^{n+m}(G;\F_p),$$ i.e., for each $x\in G^n$ and $y\in G^m$, the cup product of $[c]\in \hc^n(G;\F_p)$ and $[d]\in \hc^m(G;\F_p)$ is given by
	\begin{align*}
		[c]\cup [d](x,y)&=[c\cup d](x,y)\in \hc^{n+m}(G;\F_p).
	\end{align*}
	For more on cup products, see for example \cite[Sec.\ I.5, Sec.\ V.3]{KSBrown}.
	We remark that the abelian group $\hc^*(G;\F_p)$ together with the cup product is a graded-commutative ring, equivalently, for $[c]\in \hc^n(G;\F_p)$ and $[d]\in \hc^m(G;\F_p),$ the equality $[c]\cup[d]=(-1)^{nm}[d]\cup[c]$ holds.

	In this paper we will work only with the first $\hc^1(G;\F_p)$ and the second $\hc^2(G;\F_p)$ cohomology groups. These have a group theoretic interpretation and are very well understood. Since the action of $G$ on $\F_p$ is trivial, we have $\hc^1(G; \F_p)=\Hom(G,\F_p)$. Moreover, there is a one-to-one correspondence between the cohomology classes $[c]\in \hc^2(G;\F_p)$ and the equivalence classes of (central) group extensions 
	\begin{equation}\label{eq:extensionofGbyFp}
		0 \longrightarrow \F_p \overset{\iota}\longrightarrow E\overset{\rho}\longrightarrow G\longrightarrow 0,
	\end{equation}
	where the equivalence is defined as follows. 
	For a group $H$, let $\id_H$ denote the identity map on $H$. Two group extensions $E$ and $E'$ are \emph{equivalent} if there exists an isomorphism $\varphi: E\to E'$ making the next diagram commutative
	\begin{align*}
		\xymatrix{
			0 \ar[r] &\F_p \ar[d]^{\id_{\F_p}} \ar[r] &E \ar[r] \ar[d]_{\varphi} & G\ar[r] \ar[d]^{\id_{G}} &0\\
			0 \ar[r] &\F_p \ar[r] &E' \ar[r] &G \ar[r]&0.
		}
	\end{align*}
	Following \cite[Sec.\ IV. 3]{KSBrown}, we outline the aforementioned correspondence. Given an extension $E$ of $G$ by $\F_p$ as in \eqref{eq:extensionofGbyFp}, we 
	fix a set-theoretic map $s: G\to E$ with $\rho\circ s=\id_{G}$ and define $c\in C^2(G;\F_p)$ to be the 2-cocycle such that, for each $g_1,g_2\in G$, the equality 
	\[
	\iota(c(g_1,g_2))=s(g_1)s(g_2)s(g_1+g_2)^{-1}
	\]
	is satisfied.
	It can be shown that $[c]$ does not depend on the choice of $s$. Similarly, given $[c]\in H^2(G;\F_p)$, we construct a group extension as in \eqref{eq:extensionofGbyFp}. For this, we choose a representative $c\in C^2(G;\F_p)$ and consider the set $E_c=  G\times\F_p$, endowed with the product
	\[
	(g_1,m_1) \cdot (g_2,m_2)=(g_1+g_2, m_1+m_2+c(g_1,g_2)).
	\]
	With the definition of
	\begin{align*}
		\iota:\F_p\longrightarrow E_c,& \quad m\longmapsto \iota(m)=(0,m), \\
		\rho: E_c\longrightarrow G, & \quad (g,a) \longmapsto \rho(g,a)=g,
	\end{align*}
	we get that  $ 0 \to \F_p \overset{\iota}\to E_c\overset{\rho}\to G\to 0$ is indeed a group extension.
	
	\subsection{Cohomology of abelian $p$-groups}
	\label{subsec:strongisoLongexact}
	
	We proceed by describing the cohomology ring structure for finite abelian $p$-groups. To that aim, we observe that the cohomology ring of the cyclic $p$-group $\Z/(p^k)$ of order $p^k$ is given, as a graded ring, by the following:
	\[
	\hc^*(\Z/(p^k);\F_p)\cong \Lambda(y)\otimes \F_p[x] \textup{ for } \begin{cases}
		k\geq 1 & \textup{ if } p>2,\\
		k>1 & \textup{ if } p=2. 
	\end{cases}
	\]
	Here 
	$\Lambda(\cdot)$ denotes the exterior algebra and the generators $[y], [x]\in \hc^*(\Z/(p^k);\F_p)$ are of degrees $|y|=1$ and $|x|=2$.
	Following the notation and assumptions in Section \ref{subsec:assumptions}, using the K\"unneth formula for cohomology \cite[Sec.\ 2.5]{LEvens} and the fact that $\F_p$ is a field, we obtain the following isomorphism of graded rings
	\begin{equation}\label{eq:CohomologyAbelianGroups}
		\hc^*(G;\F_p)\cong \Lambda(y_1, \dots, y_{r+1})\otimes \F_p[x_1, \dots, x_{r+1}], 
	\end{equation}
	where the generators $[y_i]$ and $[x_i]$ have degrees $|y_i|=1$ and $|x_i|=2$ for $i\in \{1, \dots, r+1\}$ (see \cite[Sec.\ V.6]{KSBrown}). Moreover, for every $i\in\{1, \dots, r+1\}$, the element $x_i$ can be chosen to be $\beta(y_i)$, where $\beta$ is an appropriate higher order Bockstein homomorphism; see \cite[Sec.\ 6.2, Proof of Thm.\ 6.21]{JMcCleary} and Section \ref{subsec:bockstein}.
	\begin{remark}\label{rmk:p=2case}
		If we allowed $p$ to be $2$ with not all $n_j$'s at least $2$, then the cohomology ring of $G$ would not be isomorphic to the tensor product in \eqref{eq:CohomologyAbelianGroups} anymore (see for instance \cite[Sec.\ 3.3]{LEvens}). For this reason, we excluded such cases from our study.
	\end{remark}
	
	As we have seen in Section \ref{subsec:cohomologyofgroups}, the elements of $\hc^2(G;\F_p)$ correspond to central extensions of $G$ by $\F_p$; we denote by $\hab(G;\F_p)$ the subset of those that correspond to abelian extensions. We remark that $\hab(G;\F_p)$ is in fact the abelian group $\ext^1_{\Z_pG}(G,\F_p)$ \cite[Thm.\ 3.4.3]{CAWeibel}, whose elements are the abelian extension classes of $\Z_pG$-modules with trivial $G$-action
	and whose operation is the Baer sum; for more detail, see for example \cite[Sec.\ 3.4]{CAWeibel}. 
	Moreover,  $\hc^2(G;\F_p)$ decomposes as a sum of the following  $\F_p$-vector spaces (see \cite[Sec.\ 3.4]{LEvens} or \cite[11.4.16 and 11.4.18]{DJSRobinson81}):
	\[
	\hc^2(G;\F_p)=\hab(G;\F_p)\oplus\langle\im \cup\rangle\cong\hab(G;\F_p)\oplus \langle y_i\cup y_j\;  : 1\leq i <j\leq r+1\rangle.
	\]
	In addition, we have that 
	$$\dim_{\F_p}\hab(G;\F_p)=r+1 \textup{ and } \dim_{\F_p} \Lambda^2(y_1, \dots,y_{r+1})= \binom{r+1}{2}$$
	so, in particular, if $G$ is cyclic, then $\hc^2(G;\F_p)=\hab(G;\F_p)$.
	\subsection{Pontryagin dual}\label{subsec:Pontryagin}
	
	We define here the Pontryagin dual of $G$ following \cite[Ch.\ 3]{SMorris} and stress that this notion can be extended to arbitrary locally compact groups.
	Let $\mathbb{T}=\{z\in \mathbb{C}: \; |z|=1\}$ denote the circle group.
	
	\begin{definition}\label{def:pontryagindual}
		The Pontryagin dual of $G$ is the abelian group $\widehat{G}=\Hom(G,\mathbb{T})$ consisting of all homomorphisms from $G$ to $\mathbb{T}$.
	\end{definition}
	
	Since the exponent of $G$ is $p^n$, each element of $\widehat{G}=\Hom(G,\mathbb{T})$ will have image contained in $\mathbb{T}[p^n]$, the $p^n$-th torsion subgroup of $\mathbb{T}$, which is cyclic of order $p^n$. Without loss of generality, we identify $\widehat{G}$ with $\Hom(G,C/(p^nC))$.
	As the $\Hom$ functor commutes with direct sums, we have that
	\begin{align*}
		\widehat{G}=\Hom(G,C/(p^nC))&=\Hom(\bigoplus_{j=1}^{t}I_{j},C/(p^nC))\cong \bigoplus_{j=1}^{t} \Hom( I_{j},C/(p^nC))=\bigoplus_{j=1}^{t} \widehat{I_{j}} \\
		&=\Hom(\bigoplus_{j=1}^{t} \bigoplus_{k=1}^{r_j} I_{jk},C/(p^nC))\cong \bigoplus_{j=1}^{t} \bigoplus_{k=1}^{r_j}\Hom( I_{jk},C/(p^nC))=\bigoplus_{j=1}^{t} \bigoplus_{k=1}^{r_j} \widehat{I_{jk}}\,;
	\end{align*}
	see for example also \cite[Thm.\ 13]{SMorris}. The last series of maps induces the following isomorphism 
	\begin{equation}\label{eq:psi2tilde}
		\widehat{\phi_1}: G\longrightarrow\widehat{G}=\Hom(G,C/(p^nC)), \quad  \gamma_{jk}\mapsto (\gamma_{jk}^*: \gamma_{ih}\longmapsto \delta_{(j,k),(i,h)}p^{n-n_j}\gamma),
	\end{equation}
	which generalizes the isomorphism $\phi_1:V\rightarrow \widehat{V}$ from Section \ref{subsec:assumptions}.
	
	\subsection{Higher Bockstein homomorphism}\label{subsec:bockstein}
	
	We give here another characterization of $\hab(G;\F_p)$.
	To this end, we let $A$ act on $\Hom(G,C)$ and on $\Hom(G, C/p^nC)$ as described in \eqref{eq:ActionAonHomToK} and
	observe that the natural short exact sequence
	\[0\longrightarrow C[p]{\longrightarrow} C\overset{\pi}{\longrightarrow} C/(p^nC)\longrightarrow 0,\]
	induces a long exact sequence of $\Z_pA$-modules
	\begin{equation}\nonumber
		\begin{tikzpicture}[descr/.style={fill=white,inner sep=1.5pt}]
			\hspace{-10pt}
			\matrix (m) [
			matrix of math nodes,
			row sep=1em,
			column sep=2.5em,
			text height=1.5ex, text depth=0.25ex
			]
			{ 0 & C[p] & C & C/(p^nC) & \\
				& \Hom(G,C[p]) & \Hom(G,C) & \Hom(G,C/(p^nC)) & \\
				& \hc^2(G;C[p]) & \hc^2(G;C) & \hc^2(G;C/(p^nC)) & \ldots, \\
			};
			\path[overlay,->, font=\scriptsize,>=latex]
			(m-1-1) edge (m-1-2)
			(m-1-2) edge node[descr,above=0.5ex] {} (m-1-3) 
			(m-1-3) edge node[descr,above=0.5ex] {} (m-1-4)
			(m-1-4) edge[out=355,in=175,red] (m-2-2)
			(m-2-2) edge (m-2-3)
			(m-2-3) edge node[descr,above=0.5ex] {$\pi_B$} (m-2-4)
			(m-2-4) edge[out=355,in=175,red] node[descr,yshift=0.3ex] {$\beta$} (m-3-2)
			(m-3-2) edge (m-3-3)
			(m-3-3) edge (m-3-4)
			(m-3-4) edge (m-3-5);
		\end{tikzpicture}
	\end{equation}
	where $\pi_B(f)=\pi \circ f$ and $\beta$ is the connecting homomorphism \cite[Sec.\ 1.3, Add.\ 1.3.3]{CAWeibel}. The homomorphism $\beta$ is classically known as the \emph{(higher order) Bockstein homomorphism}; see for instance \cite[Sec.~5.2]{JRHarper}, \cite[Sec.\ 6.2, p.\ 197]{JMcCleary}. We write $\im\pi_B=\pi_B(G)$ and stress that $\pi_B$ respects the action of $A$. 
	Observe, moreover, that $\hc^2(G;C[p])$ is naturally isomorphic to $\hc^2(G;\F_p)$ and so we identify them.

	\begin{lemma}\label{lem:ImBeta=Hab} 
		The image of $\beta$ is an $\F_p$-vector space of dimension $\dG(G)$ and
		the following equalities hold
		$$\im \beta =\hab(G;\F_p)= \ext_{\Z_pG}^1(G,\F_p).$$
	\end{lemma}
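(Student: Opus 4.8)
The plan is to prove the two equalities separately, treating the identity $\hab(G;\F_p)=\ext_{\Z_pG}^1(G,\F_p)$ as already recorded in Section~\ref{subsec:strongisoLongexact} and concentrating on $\im\beta=\hab(G;\F_p)$. For this I would first establish the inclusion $\im\beta\subseteq\hab(G;\F_p)$ by an explicit description of the connecting homomorphism, and then force equality by a counting argument, using that $\dim_{\F_p}\hab(G;\F_p)=r+1$ is already known.

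For the inclusion, take $f\in\Hom(G,C/(p^nC))$ and choose a set-theoretic lift $\tilde f\colon G\to C$ of $f$ along the projection $C\to C/(p^nC)$. By the standard description of the connecting map, $\beta(f)$ is represented by the $2$-cocycle $c(g_1,g_2)=\tilde f(g_1)+\tilde f(g_2)-\tilde f(g_1+g_2)$, which indeed takes values in $C[p]\cong\F_p$ because $f$ is a homomorphism. Since $G$ is abelian, $c$ is symmetric, i.e.\ $c(g_1,g_2)=c(g_2,g_1)$; by the product formula on $E_c$ from Section~\ref{subsec:cohomologyofgroups}, symmetry of $c$ is equivalent to commutativity of $E_c$, and it is a property of the class $[c]$ since every coboundary $\partial_1 h$ is symmetric. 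Hence $E_{\beta(f)}$ is abelian and $\im\beta\subseteq\hab(G;\F_p)$.

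It remains to show $|\im\beta|=p^{r+1}$. The connecting map $\delta_0\colon\hc^0(G;C/(p^nC))\to\hc^1(G;C[p])$ preceding $\pi_B$ vanishes, because the map $\hc^0(G;C)\to\hc^0(G;C/(p^nC))$ is the projection $C\to C/(p^nC)$, which is surjective. The long exact sequence thus restricts to the four-term exact sequence
\[
0\longrightarrow\Hom(G,C[p])\longrightarrow\Hom(G,C)\overset{\pi_B}{\longrightarrow}\Hom(G,C/(p^nC))\overset{\beta}{\longrightarrow}\im\beta\longrightarrow 0.
\]
Here $\Hom(G,C[p])=\Hom(G,\F_p)\cong\widehat{V}$ has order $p^{r+1}$, while $|\Hom(G,C)|=|\Hom(G,C/(p^nC))|=|G|$, since each $n_j$ satisfies $n_j\leq n<n+1$ and so $\Hom(I_{jk},C)\cong\Hom(I_{jk},C/(p^nC))\cong\Z/(p^{n_j})$. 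Comparing orders along the exact sequence yields $p^{r+1}\cdot|G|=|G|\cdot|\im\beta|$, whence $|\im\beta|=p^{r+1}$. As $\im\beta$ is a subgroup of the $\F_p$-vector space $\hc^2(G;\F_p)$, it is an $\F_p$-subspace of dimension $r+1=\dG(G)$; together with the inclusion above and $\dim_{\F_p}\hab(G;\F_p)=r+1$ this gives $\im\beta=\hab(G;\F_p)$.

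The substantive part is the identification of the Bockstein cocycle and the observation that its symmetry---forced by the commutativity of $G$---translates into abelianness of the associated extension; the remainder is the bookkeeping with orders along the long exact sequence. The one point requiring care is the vanishing of $\delta_0$, which is what makes the four-term sequence exact at its left end and the order count clean.
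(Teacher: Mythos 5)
Your proof is correct. The first half (the inclusion $\im\beta\subseteq\hab(G;\F_p)$ via the explicit Bockstein cocycle $c(g_1,g_2)=\tilde f(g_1)+\tilde f(g_2)-\tilde f(g_1+g_2)$ and its symmetry) is essentially the paper's argument, which phrases the same observation as a commutator computation in $E_c$. The overall skeleton is also the same: both you and the paper conclude by matching $\dim_{\F_p}\im\beta$ against the previously recorded $\dim_{\F_p}\hab(G;\F_p)=r+1$. Where you diverge is in how that dimension is computed. The paper determines $\ker\beta$ explicitly as $\graffe{\sum\alpha_{jk}\gamma^*_{jk}\mid\alpha_{jk}\in p\Z_p}$ and identifies $\widehat{G}/\ker\beta$ with $V$; you instead observe that the connecting map $\hc^0(G;C/(p^nC))\to\hc^1(G;C[p])$ vanishes (since $C\to C/(p^nC)$ is surjective and the $G$-action is trivial), extract the four-term exact sequence, and count orders, using $|\Hom(G,C)|=|\Hom(G,C/(p^nC))|=|G|$. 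Both are sound; your route avoids the explicit kernel computation at the cost of the order bookkeeping, while the paper's explicit description of $\ker\beta$ is reused later (it underlies the isomorphism $\phi_2$ in Section~\ref{subsec:maximalsubgroups} and the definition of the Bockquivalence relation), so the paper gets more mileage out of its version of the step.
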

	
	\begin{proof}
		We start by showing that $\im\beta$ is contained in $\hab(G;\F_p)$. For this, let $[c]\in \im \beta$ and let $E_c$ be an extension of $G$ by $\F_p$ represented by $[c]$. 
		By definition of $\beta$, there exists a map $\tilde{c}: G\to C$ such that, for all $x,y\in G$, one has  $c(x,y)=\tilde{c}(x)+\tilde{c}(y)-\tilde{c}(x+y)$. Then, for all $g_1,g_2\in G$ and $m_1,m_2\in C[p]$, we have that
		\begin{align*}
			(g_1,m_1)\cdot (g_2,m_2)-(g_2,m_2)\cdot (g_1,m_1) & =(0,c(g_1,g_2)-c(g_2,g_1)) \\
			& =(0, -\tilde{c}(g_1+g_2)+\tilde{c}(g_2+g_1))=(0,0)
		\end{align*}
		and thus $E_c$ is abelian. This shows that $\im\beta\subseteq\hab(G;\F_p)$.
		Now we show that the following holds:
		\begin{equation}\label{eq: KerBeta}
			\ker\beta=\graffe{\sum_{j=1}^t\sum_{k=1}^{r_j}\alpha_{jk}\gamma^*_{jk} \mid \alpha_{jk}\in p\Z_p}=p\hat{G}.
		\end{equation}
		For this, note that $\beta$'s image is contained in the elementary abelian $p$-group $\hc^2(G;\F_p)$ and so it follows that $p\Hom(G, C/(p^nC))\subseteq \ker \beta$. We also have that
		\[
		\frac{\hat{G}}{p\hat{G}}=\frac{\Hom(G, C/(p^nC))}{p\Hom(G, C/(p^nC))}\cong \frac{G}{pG}
		\]
		and $\dim_{\F_p}\hab(G;\F_p)=r+1$ thus the first isomorphism theorem yields \eqref{eq: KerBeta}.
		It follows that $\widehat{\phi_1}$ 
		induces an isomorphism $V\rightarrow \widehat{G}/\ker\beta$ and $\im\beta$ is an $\F_p$-vector space of dimension $\dim_{\F_p}V=\dim_{\F_p}\hab(G;\F_p)$. We derive that $\im \beta=\hab(G;\F_p) = \ext_{\Z_pG}^1(G,\F_p)$.
	\end{proof}
	
	\noindent
	\begin{remark}
		Observe that $\im \beta =\hab(G;\F_p)= \ext_{\Z_pG}^1(G,\F_p)$ is precisely the collection of equivalence classes of symmetric 2-cocycles, i.e. cocycles $c$ with the property that, for all $g_1, g_2 \in G$, the equality $c(g_1,g_2)=c(g_2,g_1)$ holds. Since $A$ maps symmetric 2-cocycles to symmetric 2-cocycles, the action of $A$ on $\hc^2(G;\F_p)$ induces an action of $A$ on $\hab(G;\F_p)$. This can also be derived from Lemma \ref{lem:ImBeta=Hab}.
	\end{remark}
	
	\subsection{Maximal subgroups}\label{subsec:maximalsubgroups}
	
	In this section we describe a map that associates each cohomology class in $\hab(G;\F_p)$ to a subgroup of index at most $p$ in $G$. Recall that $\widehat{V}=\Hom(V,\F_p)$ denotes the dual of $V$ and $\widehat{G}=\Hom(G,C/(p^nC))$ the Pontryagin dual of $G$.
	Let $\phi: \widehat{G}\to \widehat{V}$ be the homomorphism defined by $\gamma_{jk}^*\mapsto v_{jk}^*$, in other words, $\phi=\phi_1\pi\widehat{\phi_1}^{-1}$. 
	It follows that
	$$\ker \phi=\{f\in\Hom(G,C/(p^nC))\mid  f(I_j)\subseteq (p^{n-n_j+1}C)/(p^nC), \, 1\leq j\leq t\}=p\Hom(G, C/(p^nC))=\ker \beta$$
	and thus $\phi$ induces an isomorphism 
	$$\phi_2: \widehat{G}/\ker \beta=\Hom(G, C/(p^n C))/\ker \beta \longrightarrow \widehat{V}=\Hom(V,\F_p).$$
	Let now $\phi_3$ be the isomorphism induced by $\beta$, i.e.
	$$\phi_3:\Hom(G,C/(p^nC))/\ker\beta\longrightarrow \im\beta=\hab(G;\F_p),$$
	where the fact that $\im\beta=\hab(G;\F_p)$ is ensured by Lemma \ref{lem:ImBeta=Hab}.
	Now, the map $\phi_4=\phi_3\circ\phi_2^{-1}$ is an isomorphism and we obtain the following commutative diagram:
	\begin{equation}\label{eq:commdiag}
		\xymatrix{
			& \widehat{V} \ar[d]^{\phi_4} &V \ar[l]_{\ {\phi_1}} &G \ar[l]_{\pi}  \\
			\widehat{G}/\ker\beta \ar[ur]^{{\phi_2}}\ar[r]^{ {\phi_3}\ } &\ \hab(G;\F_p).& &
		}
	\end{equation}
	
	\begin{lemma}\label{lem:phi4Ainv}
		The isomorphisms $\phi_2,\phi_3,\phi_4$ respect the action of $A$.
	\end{lemma}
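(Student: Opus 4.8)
The plan is to handle the three isomorphisms in turn, establishing the $A$-equivariance of $\phi_3$ and $\phi_2$ independently and then deducing that of $\phi_4=\phi_3\circ\phi_2^{-1}$ as a composition of $A$-equivariant isomorphisms.

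For $\phi_3$ the argument is structural. The long exact sequence displayed in Section~\ref{subsec:bockstein} is a sequence of $\Z_pA$-modules, so its connecting homomorphism $\beta$ is a morphism of $\Z_pA$-modules, i.e.\ it is $A$-equivariant for the action \eqref{eq:ActionAonHomToK} on source and target (here I would also record that the identification $\hc^2(G;C[p])\cong\hc^2(G;\F_p)$ is one of $\Z_pA$-modules). Consequently $\ker\beta$ is an $A$-submodule of $\widehat G$, the group $A$ acts on the quotient $\widehat G/\ker\beta$, and the isomorphism $\phi_3\colon \widehat G/\ker\beta\to\im\beta=\hab(G;\F_p)$ induced by $\beta$ (with $\im\beta=\hab(G;\F_p)$ furnished by Lemma~\ref{lem:ImBeta=Hab}) respects the $A$-action.

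For $\phi_2$ it suffices to show that $\phi=\phi_1\pi\widehat{\phi_1}^{-1}\colon\widehat G\to\widehat V$ is $A$-equivariant, since $\ker\phi=\ker\beta$ is then $A$-stable and $\phi$ descends to the claimed map on the quotient. I would verify equivariance on the $\Z_p$-module generators $\gamma_{jk}^*$ of $\widehat G$. Using the explicit description \eqref{eq:psi2tilde} of $\widehat{\phi_1}$ together with the action \eqref{eq:ActionAonHomToK} on $\widehat G=\Hom(G,C/(p^nC))$, where $\lambda$ scales inside $C/(p^nC)$, one computes $(\sigma,\lambda)\cdot\gamma_{jk}^*=\lambda(\gamma_{jk}^*\circ\sigma^{-1})$, expands this in the basis $\{\gamma_{ih}^*\}$, and applies $\phi$; this is then compared with the action on $\widehat V$ (given by \eqref{eq:ActionAonHomToK} with $G,\sigma$ replaced by $V,\overline\sigma$, equivalently transported from $V$ through $\phi_1$) applied to $v_{jk}^*=\phi(\gamma_{jk}^*)$. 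The two outcomes coincide because, under the chosen identifications, $\phi$ is reduction modulo $p$: the $\Z_p^*$-scalar acts on both $\widehat G$ and $\widehat V$ through the same reduction $\Z_p^*\to\F_p^*$, and $\sigma^{-1}$ descends to $\overline\sigma^{-1}$ on $V=G/pG$. Once $\phi$ is equivariant, so is the induced $\phi_2$, and then $\phi_4=\phi_3\circ\phi_2^{-1}$ is $A$-equivariant as a composite.

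I expect the equivariance of $\phi_2$ to be the main obstacle, and it is worth flagging why it is not merely formal: none of $\widehat{\phi_1}$, $\pi$, or $\phi_1$ is individually $A$-equivariant, as the action on $G$ ignores $\lambda$, while the actions \eqref{eq:ActionAonV} on $V$ and \eqref{eq:ActionAonHomToK} on $\widehat G$ carry a $\lambda$ and the twist $\sigma\leftrightarrow\overline\sigma$. The content of the computation is precisely that these discrepancies — a factor of $\lambda$ and the passage from $\sigma$ to $\overline\sigma$ — cancel in the composite $\phi$, so that the delicate bookkeeping reduces to the single conceptual fact that reduction modulo $p$ intertwines the three incarnations of the $\Z_p^*$-action (scaling in $C/(p^nC)$, the twisted action on $V$, and scaling in $\F_p$). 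The remaining verification is routine.
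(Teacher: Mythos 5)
Your overall strategy is the same as the paper's: $\phi_3$ is handled structurally via the long exact sequence of $\Z_pA$-modules, $\phi_2$ is checked on the generators $\gamma_{jk}^*$, and $\phi_4$ is equivariant as a composite. The treatment of $\phi_3$ is fine. The difficulty is that the one step you defer as ``routine'' --- the identity $\phi_2(\lambda\gamma_{jk}^*\sigma^{-1}+\ker\beta)=\lambda v_{jk}^*\overline{\sigma}^{-1}$ --- is exactly where all the content sits, and the justification you give for it does not hold up. You argue that $\phi$ is ``reduction modulo $p$'' and that $\sigma^{-1}$ descends to $\overline{\sigma}^{-1}$. But $\phi$ is not postcomposition with any homomorphism $C/(p^nC)\rightarrow\F_p$: it rescales the cyclic summands $\gen{\gamma_{jk}^*}$ of $\widehat{G}$ by different powers of $p$, since it sends $\gamma_{jk}^*$, whose value on $\gamma_{jk}$ is $p^{n-n_j}\gamma$, to $v_{jk}^*$ for every $j$ simultaneously. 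Consequently precomposition with $\sigma^{-1}$ on $\widehat{G}$ is not intertwined by $\phi$ with precomposition with $\overline{\sigma}^{-1}$ on $\widehat{V}$: expanding $\gamma_{jk}^*\sigma^{-1}$ in the basis $\graffe{\gamma_{ih}^*}$ produces the matrix entries of $\sigma^{-1}$ multiplied by correction factors $p^{n_i-n_j}$, whereas $v_{jk}^*\overline{\sigma}^{-1}$ produces those entries reduced mod $p$ with no such factors. (Your parenthetical claim that the contragredient action on $\widehat{V}$ agrees with the action transported from $V$ through $\phi_1$ is a symptom of the same conflation: those two actions differ by a transpose and coincide only for special $\overline{\sigma}$.)

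Concretely, take $p$ odd, $G=\Z/(p)\oplus\Z/(p^2)=\gen{\gamma_{11}}\oplus\gen{\gamma_{21}}$, and $\sigma\in\Aut(G)$ with $\sigma^{-1}(\gamma_{11})=\gamma_{11}$ and $\sigma^{-1}(\gamma_{21})=\gamma_{11}+\gamma_{21}$. Then $\gamma_{11}^*\sigma^{-1}=\gamma_{11}^*+p\gamma_{21}^*\equiv\gamma_{11}^*\bmod\ker\beta$, so $\phi_2\bigl((\sigma,1)(\gamma_{11}^*+\ker\beta)\bigr)=v_{11}^*$, while $(\sigma,1)\phi_2(\gamma_{11}^*+\ker\beta)=v_{11}^*\overline{\sigma}^{-1}=v_{11}^*+v_{21}^*$. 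So the asserted identity fails as soon as $G$ has invariant factors of distinct orders (it does go through when $t=1$, where all the factors $p^{n_i-n_j}$ are trivial). To be fair to you, the paper's own proof displays the same unproved equality and is open to the same objection, so your proposal is faithful to the intended argument; but as written the equivariance of $\phi_2$ with respect to the contragredient action on $\widehat{V}$ is not established, and the example above shows that the ``routine verification'' cannot simply be waved through --- one must either modify the action on $\widehat{V}$ (replacing $\overline{\sigma}^{-1}$ by the automorphism of $V$ that $\sigma^{-1}$ actually induces through $\widehat{\phi_1}$ and reduction) or restrict the class of groups considered.
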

	
	\begin{proof}
		Since $\phi_3$ clearly respects the action of $A$, it suffices to show that $\phi_2$ is an $A$-isomorphism on the generators $\gamma_{jk}^*\in \widehat{G}$ described in \eqref{eq:psi2tilde}. Let $(\sigma, \lambda)\in A$. Since $\sigma (\ker \beta)\subseteq \ker\beta$, the following equalities hold
		\begin{align*}
			\phi_2((\sigma,\lambda)(\gamma_{jk}^*+\ker\beta))&=\phi_2(\lambda \gamma_{jk}^*\sigma^{-1}+ \ker\beta)= \lambda v_{jk}^*\overline{\sigma}^{-1}=(\sigma,\lambda)\phi_2(\gamma_{jk}^*+\ker\beta)
		\end{align*}
		and thus both $\phi_2$ and $\phi_4=\phi_3\circ \phi_2^{-1}$ are $A$-isomorphisms.
	\end{proof}
	We rely on the commutative diagram \eqref{eq:commdiag} to define the following function 
	\begin{align*}
		\tau: \hab(G;\F_p) & \longrightarrow \graffe{\textup{subspaces of codimension at most $1$ of } V} \\\nonumber
		[c] &\longmapsto \ker(\phi_4^{-1}([c]))
	\end{align*}
	and remark that, by construction, 
	$\tau([c])=V$ if and only if $[c]=[0]$.
	In particular $\tau$ induces a bijection
	\[
	\mathfrak{t}_V:\mathbb{P}\hab(G;\F_p) \longrightarrow \graffe{\textup{hyperplanes of  }V},
	\]
	equivalently, postcomposing with $\pi^{-1}$, a bijection
	\begin{align}\label{eq:TauMap}
		\mathfrak{t}_G: \ & \mathbb{P}\hab(G;\F_p)\longrightarrow \graffe{\textup{subgroups of index $p$ of } G}. 
	\end{align}
	We will show in Section \ref{subsec:compatibility} that the maps $\mathfrak{t}_G$ and $\mathfrak{t}_V$ are compatible with the actions of $A$ as given in \eqref{eq:ActionAonSbgs} and \eqref{eq:ActionAonPH^2}; see Corollary \ref{cor:stabT}. In particular, it will follow that each non-trivial orbit of $A$ in $\hab(G;\F_p)$ has cardinality divisible by $p-1$. This is true in higher generality. We warn the reader that in the sequel we will often, by a slight abuse of notation, write $\mathfrak{t}_G([c])$ meaning the image under $\mathfrak{t}_G$ of the projective class of $[c]$.
	
	\begin{lemma}\label{lem:lambda=1}
		Let $\lambda\in\Z_p^*$, $[c]\in\hc^2(G;\F_p)$, and $[\omega]\in\gen{\im\cup}$. Then the following hold:
		\begin{enumerate}[label=$(\arabic*)$]
			\item $\lambda[c]=[c]$ if and only if $[c]=0$ or $\lambda=1$,
			\item if $[c]\in\hab(G;\F_p)$, then $(\lambda,\lambda)[c]=[c]$ and $(\lambda,\lambda)[\omega]=\lambda^{-1}[\omega]$.
		\end{enumerate}
		Moreover, every non-trivial orbit of $\hc^2(G;\F_p)$ has cardinality divisible by $p-1$.  
	\end{lemma}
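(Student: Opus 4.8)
The plan is to handle the three assertions separately, throughout recording that a unit $\lambda\in\Z_p^*$ acts on $\F_p$-coefficients only through its reduction modulo $p$, which I also write $\lambda$; thus $(\id,\lambda)\in A$ acts on the $\F_p$-vector space $\hc^2(G;\F_p)$ as multiplication by the scalar $\lambda\in\F_p$, and $\lambda[c]=(\id,\lambda)[c]$ in the notation of \eqref{eq:ActionAonHomToK}. Assertion $(1)$ is then linear algebra over the field $\F_p$: the equality $\lambda[c]=[c]$ is equivalent to $(\lambda-1)[c]=0$, which holds exactly when $[c]=0$ or $\lambda=1$.

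For $(2)$ I would write $\mu\in\Aut(G)$ for the automorphism $g\mapsto\lambda g$ (defined since $G$ is a finite $\Z_p$-module and $\lambda\in\Z_p^*$), so that $(\lambda,\lambda)$ means $(\mu,\lambda)$. The key observation to isolate is that $(\mu,\lambda)$ fixes $\Hom(G,M)$ pointwise for every $\Z_p$-module $M$ with trivial $G$-action on which $\lambda$ acts as the $\Z_p$-scalar: for $f\in\Hom(G,M)$, precomposing with $\mu^{-1}$ sends $f$ to $g\mapsto f(\lambda^{-1}g)=\lambda^{-1}f(g)$, and the coefficient action multiplies this back by $\lambda$. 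Taking $M=C/(p^nC)$ shows that $(\mu,\lambda)$ fixes $\widehat G=\Hom(G,C/(p^nC))$ pointwise; since $\beta$ is a connecting map of a sequence of $\Z_pA$-modules and hence $A$-equivariant, and $\hab(G;\F_p)=\im\beta$ by Lemma \ref{lem:ImBeta=Hab}, we deduce $(\lambda,\lambda)[c]=[c]$ for all $[c]\in\hab(G;\F_p)$. Taking $M=\F_p$ shows likewise that $(\mu,\lambda)$ fixes $\hc^1(G;\F_p)=\Hom(G,\F_p)$ pointwise.

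It remains to treat $\gen{\im\cup}$, and this is the step I expect to require the most care. From the cochain identity $(c\cup d)\circ(\sigma^{-1}\times\sigma^{-1})=(c\circ\sigma^{-1})\cup(d\circ\sigma^{-1})$ together with $\lambda(c\cup d)=\lambda^{-1}\bigl((\lambda c)\cup(\lambda d)\bigr)$, the definition \eqref{eq:ActionAonHomToK} yields the twisted equivariance
\begin{equation}\nonumber
(\sigma,\lambda)\bigl([y]\cup[z]\bigr)=\lambda^{-1}\,\bigl((\sigma,\lambda)[y]\bigr)\cup\bigl((\sigma,\lambda)[z]\bigr)\qquad\textup{for }[y],[z]\in\hc^1(G;\F_p),
\end{equation}
the surviving $\lambda^{-1}$ reflecting that the action on $\hc^2$ carries one factor of $\lambda$ whereas a cup of two $1$-classes carries $\lambda^2$. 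Specialising $\sigma=\mu$ and using that $(\mu,\lambda)$ fixes $[y]$ and $[z]$ gives $(\lambda,\lambda)([y]\cup[z])=\lambda^{-1}([y]\cup[z])$; since $\gen{\im\cup}$ is the $\F_p$-span of such products and the $A$-action is $\F_p$-linear, this gives $(\lambda,\lambda)[\omega]=\lambda^{-1}[\omega]$ for every $[\omega]\in\gen{\im\cup}$. The delicate point is to verify the displayed identity at the level of cohomology classes, i.e.\ that passing from cochains to the induced cup product preserves exactly the single factor $\lambda^{-1}$.

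For the final statement, set $L=\graffe{\id_G}\times\Z_p^*\leq A$, a central (hence normal) subgroup. Fix $[c]\neq 0$; by $(1)$ we have $L\cap A_c=\graffe{(\id,\lambda)\mid\lambda=1\textup{ in }\F_p}$, so $[L:L\cap A_c]=\#\F_p^*=p-1$. As $\hc^2(G;\F_p)$ is finite, the orbit $A[c]$ and the index $[A:A_c]$ are finite; since $L$ is normal, $LA_c$ is a subgroup with $[LA_c:A_c]=[L:L\cap A_c]=p-1$, and the tower law $[A:A_c]=[A:LA_c]\,[LA_c:A_c]$ forces $p-1$ to divide $\#A[c]=[A:A_c]$.
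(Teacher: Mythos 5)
Your proposal is correct and follows essentially the same route as the paper: part $(1)$ is the same scalar-action observation (the paper does it at the cochain level), part $(2)$ rests on the same two computations, namely that $(\lambda,\lambda)$ fixes $\widehat{G}$ pointwise and hence $\im\beta=\hab(G;\F_p)$ by equivariance of $\beta$, and that $(\lambda,\lambda)(f\cup g)=\lambda^{-1}(f\cup g)$ on generators of $\gen{\im\cup}$; the ``delicate point'' you flag is harmless since both the action and the cup product are defined on cochains and descend compatibly. Your orbit--stabilizer argument via the central subgroup $\graffe{\id_G}\times\Z_p^*$ for the final divisibility claim is the intended one, which the paper leaves implicit.
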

	
	\begin{proof}
		(1) Suppose that $\lambda[c]=[c]$, i.e.\ there exists $f\in C^1(G;\F_p)$ such that for all $x,y\in G$,
		\[
		\lambda c(x,y)= c(x,y)+\partial_1(f)(x,y) \Longleftrightarrow (\lambda-1)c(x,y)=\partial_1(f)(x,y).
		\]
		If $\lambda \neq 1$, then, for all $x,y\in G$, it holds that $c(x,y)=(\lambda-1)^{-1}\partial_1(f)(x,y)$. Now define $\tilde{f}=(\lambda-1)^{-1}f$ to obtain that $c(x,y)=\partial_1(\tilde{f})(x,y)$ and thus $[c]=[0]$. The other implication is clear.
		
		(2) Let $\tilde{c}\in\Hom(G,C/(p^nC))$ be such that $[c]=\beta(\tilde{c})$ and note that $\tilde{c}$ exists by Lemma \ref{lem:ImBeta=Hab}. Let, moreover, $f,g\in\Hom(G,\F_p)$. Then, for each $x,y\in G$, we have
		\begin{align*}
			(\lambda,\lambda)[c]&=\beta(\lambda \tilde{c}\lambda^{-1})=\beta(\tilde{c})=[c],\\
			(\lambda,\lambda)(f\cup g)(x,y) & = \lambda f(\lambda^{-1}x)g(\lambda^{-1}y)=\lambda\lambda^{-2}f(x)g(y)=\lambda^{-1}(f\cup g)(x,y).
		\end{align*}
		We are now done since $\gen{\im\cup}$ is the linear span of elements of the form $[f\cup g]$.
	\end{proof}
	
	\begin{definition}\label{def:kerT,c-idx}
		Let $[c]\in\hab(G;\F_p)$ and let $M$ be a subgroup of $G$. Then the
		\begin{enumerate}[label=$(\arabic*)$]
			\item 
			\emph{kernel of $[c]$ in $G$} is the subgroup 
			$$T_c=\begin{cases}
				G & \textup{ if } [c]=0,\\
				\mathfrak{t}_G([c]) & \textup{ otherwise.}
			\end{cases}$$
			\item \emph{$c$-index of $M$} is the number
			$i_c(M)=\dim_{\F_p}((M+T_c)/T_c)\in \graffe{0,1}$.
		\end{enumerate}
	\end{definition}
	
	\noindent
	We will often write $T$ for $T_c$ if the cohomology class $[c]$ is clear from the context.
	
	\begin{ex}\label{ex:CB-ok}
		Let $j\in\graffe{1,\ldots, t}$ and $k\in\graffe{1,\ldots,r_j}$ and let $\beta$ be as in Section \ref{subsec:bockstein}.  
		Set $[c]=\beta(\gamma^*_{jk})$.
		Then $\phi_4^{-1}([c])=v_{jk}^*$ and it follows that the kernel of $[c]$ is 
		$
		T=\mathfrak{t}_G([c])=\pi^{-1}(\ker v_{jk}^* )=\ker(\gamma^*_{jk})+pG.
		$
	\end{ex}
	
	\subsection{Pl\"ucker embedding}\label{subsec:plucker}
	
	Recall the definition of the cup product 
	$\cup: \hc^1(G;\F_p)\times \hc^1(G;\F_p) \longrightarrow \hc^2(G;\F_p)$ as given in Section \ref{subsec:cohomologyofgroups}. In the present section, we construct maps on $\im\cup$ that will allow us to interpret $\im\cup$ as a specific family of subgroups of $G$. This construction is based on the Pl\"ucker embedding for Grassmannians; see for example \cite[Sec.\ 1.24]{ISafarevich},\cite[Ch.\ 5]{InvitationNLA}. 
	Until the end of the current section, for each positive integer $k$ and $\F_p$-vector space $W$, we denote by $\cor{G}(k,W)$ the Grassmannian of $k$-dimensional linear subspaces of $W$. Denote, moreover, by $\wedge$ the exterior product map $W\times W\rightarrow \Lambda^2 W$.
	
	We start by remarking that the vector spaces $\langle \im \cup \rangle$ and $\Lambda^2\widehat{V}$ are naturally isomorphic. The cup product being bilinear and alternating, the universal property of wedge products yields the surjective homomorphism
	\[
	\psi_G:\Lambda^2\Hom(G,\F_p)\longrightarrow \gen{\im\cup} \textup{ satisfying } f\wedge g \longmapsto [f\cup g].
	\]
	Observe that the last map is our announced isomorphism, since $\Hom(G,\F_p)$ and $\Hom(V,\F_p)=\widehat{V}$ are naturally isomorphic and the dimensions of  $\Lambda^2\widehat{V}$ and $\gen{\im\cup}$ are the same. Moreover, by its definition, $\psi_G$ satisfies $\psi_G(\im\wedge)=\im\cup$ and thus induces a bijection $\mathbb{P}\im\wedge\rightarrow\mathbb{P}\im\cup$. 
	
	We proceed by describing the Pl\"ucker embedding
	$s: \cor{G}(2,\widehat{V})\rightarrow \mathbb{P}(\Lambda^2 \widehat{V})$.
	For each $2$-dimensional subspace $U$ of $\widehat{V}$, fix an $\F_p$-basis $(f_u,g_u)$ of $U$ and define $s(U)=[f_u \wedge g_u]$. It is not difficult to show that $s$ is well-defined and that its image is equal to $\mathbb{P}\im\wedge$. We use now the map $s$ to define a bijection $\cor{G}(\dG(G)-2,V)  \rightarrow \mathbb{P}\im\cup$. For that, we start by identifying $\cor{G}(2,\widehat{V})$ and $\cor{G}(\dG(G)-2,V)$ via
	\[
	\cor{G}(2,\widehat{V})\longrightarrow \cor{G}(\dG(G)-2,V), \quad U=\F_pf_u\oplus\F_pg_u \longmapsto \ker f_u \cap \ker g_u.
	\]
	Composing maps in the obvious way, we get the following well-defined bijection
	\begin{equation}\nonumber
		\mathfrak{m}_V: \mathbb{P}\im\cup  \longrightarrow  \cor{G}(\dG(G)-2,V), \quad [\omega]=[f\cup g] \longmapsto \mathfrak{m}_V([\omega])=\pi(\ker f\cap \ker g),
	\end{equation}
	inducing the bijection
	\begin{equation}\label{eq:correspondenceM}
		\mathfrak{m}_G: \mathbb{P}\im\cup  \longrightarrow  \graffe{\pi^{-1}(U) \mid U \in \cor{G}(\dG(G)-2,V)}, \quad [\omega]=[f\cup g] \longmapsto \mathfrak{m}_G([\omega])=\ker f\cap \ker g.
	\end{equation}
	The last map identifies each element of $\mathbb{P}\im\cup$ with a subgroup $M$ of $G$ of index $p^2$ that contains $pG$. We next show that $\mathfrak{m}_G$ respects the action of $A$. As for the case of $\mathfrak{t}_G$ we will slightly abuse notation writing $\mathfrak{m}_G([\omega])$ for the image of the projective class of $[\omega]$ under $\mathfrak{m}_G$.
	
	\begin{lemma}\label{lemma:kernels}
		Let $[\omega]\in\mathbb{P}\im\cup$ and $(\sigma,\lambda)\in A$. 
		Then the equality $\sigma(\mathfrak{m}_G([\omega]))=\mathfrak{m}_G((\sigma,\lambda)[\omega])$ holds.
	\end{lemma}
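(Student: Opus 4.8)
The plan is to reduce the identity to a cochain-level computation performed on a decomposable representative, and then to rewrite kernels of precomposed homomorphisms as images under $\sigma$.

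First, I would fix a representative of the form $\omega=f\cup g$ with $f,g\in\Hom(G,\F_p)$; this is legitimate precisely because $\mathfrak{m}_G$ was defined on classes $[\omega]=[f\cup g]$ and already shown to be well defined, so the particular decomposable representative chosen is immaterial. By the projective action \eqref{eq:ActionAonPH^2} we have $(\sigma,\lambda)[\omega]=[\omega\sigma^{-1}]$, so the factor $\lambda$ plays no role. The key observation is that precomposition with $\sigma^{-1}$ commutes with the cup product of $1$-cochains: for all $x,y\in G$ one has $(f\cup g)(\sigma^{-1}x,\sigma^{-1}y)=f(\sigma^{-1}x)\,g(\sigma^{-1}y)=\bigl((f\sigma^{-1})\cup(g\sigma^{-1})\bigr)(x,y)$, so that $\omega\sigma^{-1}=(f\sigma^{-1})\cup(g\sigma^{-1})$ already at the level of $C^2(G;\F_p)$. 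In particular $(\sigma,\lambda)[\omega]$ is again decomposable and lies in $\mathbb{P}\im\cup$, and applying $\mathfrak{m}_G$ to it yields $\ker(f\sigma^{-1})\cap\ker(g\sigma^{-1})$.

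Next, for any $h\in\Hom(G,\F_p)$ I would rewrite $\ker(h\sigma^{-1})=\{x\in G\mid h(\sigma^{-1}x)=0\}=\sigma(\ker h)$, using that $\sigma$ is an automorphism of $G$. Since $\sigma$ is bijective it preserves intersections, whence $\ker(f\sigma^{-1})\cap\ker(g\sigma^{-1})=\sigma(\ker f)\cap\sigma(\ker g)=\sigma(\ker f\cap\ker g)=\sigma(\mathfrak{m}_G([\omega]))$. Comparing this with the previous paragraph gives exactly the desired equality $\sigma(\mathfrak{m}_G([\omega]))=\mathfrak{m}_G((\sigma,\lambda)[\omega])$.

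There is no substantial obstacle here; the computation is essentially functoriality of the cup product under precomposition combined with the bijectivity of $\sigma$. The only points deserving care are (i) that $\mathfrak{m}_G$ depends only on $[\omega]$, so the choice of a decomposable representative is harmless, and (ii) that the $\lambda$-component genuinely drops out, which it does because the action on $\mathbb{P}\hc^2(G;\F_p)$ in \eqref{eq:ActionAonPH^2} ignores $\lambda$ (equivalently, rescaling $f$ and $g$ by $\lambda$ alters $f\cup g$ only by the scalar $\lambda^2$, which is invisible projectively). It is worth noting that this same computation re-proves that $\im\cup$ is $A$-stable, so that the action of $A$ on $\mathbb{P}\im\cup$ implicit in the statement is well defined.
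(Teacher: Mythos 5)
Your argument is correct, and it takes a somewhat different route from the paper's. You exploit the cochain-level identity $(f\cup g)\sigma^{-1}=(f\sigma^{-1})\cup(g\sigma^{-1})$ to produce one explicit decomposable representative of the image class, and then delegate the independence of the answer from the chosen decomposition to the well-definedness of $\mathfrak{m}_G$, which was established in Section \ref{subsec:plucker} via the injectivity of $\psi_G$ and of the Pl\"ucker embedding $s$; after that, the only computation is $\ker(h\sigma^{-1})=\sigma(\ker h)$ together with the fact that $\sigma$ preserves intersections. The paper instead writes $(\sigma,\lambda)[\omega]=[f_\vartheta\cup g_\vartheta]$ for an \emph{arbitrary} decomposable representative of the image class and verifies $\sigma(M_\omega)\subseteq M_\vartheta$ directly: for $x\in M_\omega$ it uses the symmetry of the coboundary $\partial_1(f)$ to obtain $f_\vartheta(\sigma(x))g_\vartheta(y)=f_\vartheta(y)g_\vartheta(\sigma(x))$ for all $y$, and then specializes $y$ to kill each factor in turn. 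In effect the paper re-derives the representative-independence inside the lemma by a cocycle computation, whereas you quote it; your version is shorter and more conceptual, at the cost of leaning on the well-definedness of $s$, which the paper asserts without full detail. Your side remarks are also accurate: $\lambda$ is irrelevant both under the projective action \eqref{eq:ActionAonPH^2} and for the non-projective representative $\lambda f\sigma^{-1}$ used later in Proposition \ref{prop:LevelsTindices}, since rescaling does not change kernels, and your computation does show that $\mathbb{P}\im\cup$ is $A$-stable. The only imprecision is cosmetic: rescaling a single factor of $f\cup g$ by $\lambda$ changes the class by $\lambda$, not $\lambda^2$, but either way this is invisible projectively.
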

	
	\begin{proof}
		Write $[\omega]=[f\cup g]$ with $f,g\in\Hom(G;\F_p)$. Then, for each choice of $x,y\in G$, we have
		\[
		(\sigma,\lambda)(f\cup g)(x,y)=\lambda (f\cup g)(\sigma^{-1}(x),\sigma^{-1}(y))=\lambda f(\sigma^{-1}(x))g(\sigma^{-1}(y)).
		\]
		In other words, $(\sigma,\lambda)(f\cup g)=(\lambda f\sigma^{-1})\cup(g\sigma^{-1})$ and we derive that
		\[
		\mathfrak{m}_G((\sigma,\lambda)[\omega])=\ker(\lambda f\sigma^{-1})\cap \ker(g\sigma^{-1})=\sigma (\ker f)\cap \sigma(\ker g)=\sigma(\ker f \cap \ker g)=\sigma(\mathfrak{m}_G([\omega])).
		\]
		This concludes the proof. 
	\end{proof}
	
	\begin{corollary}\label{cor:mmap}
		The map 
		$$\mathfrak{m}_G:\mathbb{P}\im\cup\rightarrow \graffe{M \textup{ subgroup of } G \textup{ with } G/M\textup{  elementary abelian of rank } 2}$$
		is a bijection respecting the action of $A$. 
	\end{corollary}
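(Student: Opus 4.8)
The plan is to assemble the corollary from the bijection $\mathfrak{m}_G$ constructed in \eqref{eq:correspondenceM} together with the equivariance supplied by Lemma \ref{lemma:kernels}. Recall that the discussion preceding the corollary already exhibits $\mathfrak{m}_G$ as a bijection from $\mathbb{P}\im\cup$ onto $\graffe{\pi^{-1}(U)\mid U\in\cor{G}(\dG(G)-2,V)}$. Thus the first task is purely a matter of rewriting the codomain: I would show that the set of preimages $\pi^{-1}(U)$, as $U$ ranges over the codimension-$2$ subspaces of $V=G/pG$, is exactly the set of subgroups $M\leq G$ with $G/M$ elementary abelian of rank $2$.

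For this identification I would argue via the correspondence theorem for $\pi\colon G\to V$. A subgroup $M\leq G$ has $G/M$ elementary abelian precisely when $p(G/M)=0$, i.e.\ when $pG=\ker\pi\subseteq M$; and for such $M$ the quotient $G/M$ has rank $2$ exactly when $[G:M]=p^2$. Under $\pi$, the subgroups containing $pG$ correspond bijectively to the subspaces $U=M/pG$ of $V$, with $M=\pi^{-1}(U)$, and the rank-$2$ condition translates into $\dim_{\F_p}U=\dG(G)-2$, that is $U\in\cor{G}(\dG(G)-2,V)$. This gives the claimed equality of the two codomains, so $\mathfrak{m}_G$ is the asserted bijection.

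It remains to check equivariance. By \eqref{eq:ActionAonSbgs} the group $A$ acts on subgroups of $G$ by $(\sigma,\lambda)\cdot H=\sigma(H)$, and since each $\sigma\in\Aut(G)$ preserves both the index of a subgroup and the subgroup $pG$, the target set is $A$-stable and $\mathfrak{m}_G$ indeed lands in an $A$-set. By \eqref{eq:ActionAonPH^2} the action on $\mathbb{P}\im\cup\subseteq\mathbb{P}\hc^2(G;\F_p)$ is $(\sigma,\lambda)\cdot[\omega]=[\omega\sigma^{-1}]$, the scalar $\lambda$ being irrelevant projectively. With these two actions in hand, the equality $\sigma(\mathfrak{m}_G([\omega]))=\mathfrak{m}_G((\sigma,\lambda)[\omega])$ furnished by Lemma \ref{lemma:kernels} is precisely the statement that $\mathfrak{m}_G$ respects the $A$-action, completing the proof. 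There is little genuine obstacle here: the corollary is a bookkeeping step, and the only point requiring care is confirming that the two descriptions of the codomain coincide and that this codomain is $A$-stable, both of which reduce to elementary facts about $\pi$ and the automorphisms of $G$.
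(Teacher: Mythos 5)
Your argument is correct and matches the paper's intent exactly: the paper states this corollary without proof, treating it as an immediate consequence of the construction of $\mathfrak{m}_G$ in \eqref{eq:correspondenceM} (whose codomain is identified with the rank-$2$ elementary abelian quotients via the correspondence theorem, just as you do) together with the equivariance from Lemma \ref{lemma:kernels}. Nothing is missing.
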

	
	\begin{definition}\label{def:kerM,c-idx}
		Let $[c]\in\hab(G;\F_p)$ and $[\omega]\in\im\cup$. 
		Then the
		\begin{enumerate}[label=$(\arabic*)$]
			\item 
			\emph{kernel of $[\omega]$ in $G$} is the subgroup 
			$$M_{\omega}=\begin{cases}
				G & \textup{ if } [\omega]=0,\\
				\mathfrak{m}_G([\omega]) & \textup{ otherwise.}
			\end{cases}$$
			\item \emph{$c$-index of $[\omega]$} is the number
			$i_c([\omega])=i_c(M_{\omega})$.
		\end{enumerate}
	\end{definition}
	
	We remark that, with the notation from Definition \ref{def:kerM,c-idx}, it is not difficult to show that, if $E$ is an extension defined by $[\omega]$, then the image of $\ZG(E)$ in $G$ coincides with $M_{\omega}$.
	
	\begin{lemma}\label{lemma:T=kerf'}
		Let $[\omega]\in\im\cup$ and let $M=M_{\omega}$ be the kernel of $[\omega]$ in $G$.
		Let $M\subseteq H,K\subseteq G$ be distinct maximal subgroups of $G$. 
		Then there exist $f,g\in\Hom(G,\F_p)$ such that $H=\ker f$, $K=\ker g$, and $[\omega]=[f\cup g]$. 
	\end{lemma}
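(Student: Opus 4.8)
The plan is to work projectively and lean on the bijection $\mathfrak{m}_G$ of Corollary~\ref{cor:mmap} together with bilinearity of the cup product. First I would dispose of the degenerate case: if $[\omega]=0$ then $M=M_\omega=G$, and no two distinct maximal subgroups can sit between $G$ and $G$, so the hypothesis forces $[\omega]\neq 0$. Hence $M=M_\omega=\mathfrak{m}_G([\omega])$ has index $p^2$ with $G/M$ elementary abelian of rank $2$; in particular $M\supseteq pG$, so $M/pG$ is a codimension-$2$ subspace of $V$ and
$$U=\{h\in\Hom(G,\F_p)\mid M\subseteq\ker h\}\cong\Hom(G/M,\F_p)$$
is $2$-dimensional (every $h\in\Hom(G,\F_p)$ factors through $V$, so $U$ is the annihilator of $M/pG$ in $\widehat{V}$).

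Next I would set up the dictionary between maximal subgroups and lines in $U$. Since $H,K$ are maximal and contain $M\supseteq pG$, each of $H/pG$, $K/pG$ is a hyperplane of $V$ containing $M/pG$, so its annihilator is a line inside $U$. I would choose $f_0,g_0\in U$ spanning $\mathrm{Ann}(H/pG)$ and $\mathrm{Ann}(K/pG)$ respectively, viewed as elements of $\Hom(G,\F_p)$; then $\ker f_0=H$ and $\ker g_0=K$, and because $H\neq K$ the pair $(f_0,g_0)$ is an $\F_p$-basis of $U$. A short index count gives $H\cap K=M$: two distinct maximal subgroups of the abelian group $G$ satisfy $HK=G$, whence $[G:H\cap K]=p^2$, and since $M\subseteq H\cap K$ with $[G:M]=p^2$ the two subgroups coincide.

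Now I would compare $[\omega]$ with $[f_0\cup g_0]$. As $f_0,g_0$ are linearly independent, $\psi_G$ being an isomorphism yields $[f_0\cup g_0]\neq 0$, and by the definition of $\mathfrak{m}_G$ in \eqref{eq:correspondenceM} we get $\mathfrak{m}_G([f_0\cup g_0])=\ker f_0\cap\ker g_0=H\cap K=M=\mathfrak{m}_G([\omega])$. Injectivity of $\mathfrak{m}_G$ on $\mathbb{P}\im\cup$ then forces $[\omega]$ and $[f_0\cup g_0]$ to be projectively equal, i.e.\ $[\omega]=\mu[f_0\cup g_0]$ for some $\mu\in\F_p^*$. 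Finally I would absorb the scalar using bilinearity of $\cup$: setting $f=\mu f_0$ and $g=g_0$ gives $[f\cup g]=\mu[f_0\cup g_0]=[\omega]$, while $\ker f=\ker f_0=H$ and $\ker g=K$, as required.

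The only delicate point—really the crux—is the passage from projective equality to the exact equality $[\omega]=[f\cup g]$: the map $\mathfrak{m}_G$ sees only projective classes, so by itself it cannot pin $[\omega]$ down on the nose. The resolution is the bilinearity of the cup product, which lets the undetermined scalar $\mu$ be pushed entirely into one of the two factors without disturbing the kernels $H$ and $K$. Everything else reduces to linear algebra over $V$ and the standard duality between subspaces and their annihilators.
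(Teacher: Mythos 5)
Your proof is correct and follows essentially the same route as the paper, which simply invokes the well-definedness of the bijection $\mathfrak{m}_G$ from \eqref{eq:correspondenceM} once $H$ and $K$ are written as kernels of homomorphisms $G\to\F_p$. You spell out the details the paper leaves implicit — in particular the absorption of the projective scalar $\mu$ into $f_0$ via bilinearity — and that step is exactly right.
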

	
	\begin{proof}
		Any maximal subgroup can be written as the kernel of a homomorphism $G\rightarrow\F_p$. Now, $H$ and $K$ being distinct, the claim follows from the fact that the map $\mathfrak{m}_G$ from \eqref{eq:correspondenceM} is well-defined.
	\end{proof}

	\section{Subgroup levels and compatibility}\label{sec:generalresults}
	
	We recall briefly the notation introduced in Section \ref{subsec:assumptions} that will be relevant here. If two elements $[c],[d]\in\hc^2(G;\F_p)$ belong to the same $A$-orbit, we will write $[c]\sim_A[d]$. For $[c]\in\hc^2(G;\F_p)$, we will write $A_c$ meaning the stabilizer of $[c]$ in $A$ and, if two elements $[d],[e]$ are in the same orbit under the induced action by $A_c$, we will write $[d]\sim_{A_c}[e]$. For a subgroup $K$  of $G$, we denote by $A_K$ the stabilizer of $K$ in $A$ with respect to the action from \eqref{eq:ActionAonSbgs}. 
	
	\subsection{Subgroup levels}
	The aim of this section is to introduce subgroup levels and prove basic properties about them. Subgroup levels are the key objects allowing us to describe the $A$-orbits on $\hab(G;\F_p)\times \im\cup$ combinatorially. 
	
	\begin{definition}\label{def:lLH}
		Let $M$ and $T$ be subgroups of $G$. Then the \emph{$T$-levels of $M$} are the entries of the pair $\ell\L_T(M)=(\ell_T(M), \L_T(M))$ where
		\begin{enumerate}[label=$(\arabic*)$]
			\item $\ell_T(M)=1+\max\graffe{0\leq i\leq \log_p\exp(T)\, :\, T[p^i]\subseteq M\cap T}$,
			\item $\L_T(M)=\min\graffe{j\in\Z_{\geq 0}\, :\, T[p^j]+(M\cap T)=T}$.
		\end{enumerate}
		If $T=G$, simply write $\ell\L(M)=(\ell(M),\L(M))$ for $\lL_G(M)$.
	\end{definition}
	
	\begin{ex}\label{ex:index0}~
		\begin{enumerate}[label=$(\arabic*)$]
			\item The $G$-levels vector of $G$ is $(n+1,0)$. More generally, the $T$-levels vector of $G$ is $(\log_p\exp(T)+1, 0)$.
			\item Assume that 
			$
			G=\Z/(p^2)\oplus\Z/(p^2)\oplus\Z/(p^3)\oplus\Z/(p^3)=\langle\gamma_{11}, \gamma_{12}, \gamma_{21}, \gamma_{22}\rangle,
			$
			and define 
			\begin{align*}
				T  &= \langle \gamma_{12}, \gamma_{21}, \gamma_{22} \rangle+ pG=\graffe{(pt_1,t_2,t_3,t_4) \mid t_i\in\Z_p}\subseteq G, \\ 
				M &= \langle \gamma_{12}, \gamma_{21}-\gamma_{22} \rangle +pG =\graffe{(pm_1,m_2, m_3,-m_3+pm_4)\mid m_i\in\Z_p}\subseteq G.
			\end{align*}
			It follows that 
			$\lL(M)=(2,3)$ and $\lL_T(M)=(3,3)$. 
			Since $T$ is a maximal subgroup of $G$, we can associate to it an element $[c]\in\hab(G;\F_p)\setminus\graffe{0}$ via \eqref{eq:TauMap}. For such a cohomology class $[c]$, the $c$-index of $M$ is $i_c(M)=0$, because $M$ is contained in $T$ (see Definition \ref{def:kerT,c-idx}). 
		\end{enumerate}
	\end{ex}
	
	\noindent
	We generalize the last example in the form of the following proposition.
	
	\begin{proposition}
		Let $X$ be a proper subgroup of $G$ containing $pG$ and with $|G:X|=p^k$. Let 
		\[
		p^{\alpha_1}\geq\ldots\geq p^{\alpha_{r+1}}
		\ \ \textup{ and } \ \ 
		p^{\beta_1}\geq\ldots\geq p^{\beta_{r+1}}
		\] 
		denote the elementary divisors of $G$ and $X$, respectively. Then there exist indices $r+1\geq i_1>\ldots>i_k\geq 1$ such that the following holds:
		\[
		\beta_i-\alpha_i =\begin{cases}
			1 & \textup{ if } i\in\{i_1,\ldots,i_k\}, \\
			0 & \textup{ otherwise.}
		\end{cases}
		\]
		Moreover, one has $\ell(X)=\alpha_{i_k}$ and $\L(X)=\alpha_{i_1}$.
	\end{proposition}
	
	\begin{proof}
		Since $X$ contains $pG$ and $|G:X|=p^k$, it is clear that the sequence $r+1\geq i_1>\ldots>i_k\geq 1$ of integers exists. From the definition of the $i_j$'s it is now easy to see that $\ell(X)=\alpha_{i_k}$ and $\L(X)=\alpha_{i_1}$.
	\end{proof}
	
	\begin{corollary}\label{cor:levMax}
		If $T$ is a maximal subgroup of $G$, then $\ell(T)=\L(T)$.
	\end{corollary}
	
	\begin{corollary}\label{cor:lLiso}
		Let $X,Y$ be subgroups of $G$ containing $pG$ and satisfying $|G:X|=|G:Y|\leq p^2$. 
		Then the following are equivalent:
		\begin{enumerate}[label=$(\arabic*)$]
			\item $X$ and $Y$ are isomorphic,
			\item $\lL(X)=\lL(Y)$.
		\end{enumerate}
	\end{corollary}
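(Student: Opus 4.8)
The plan is to split along the three possible values of the index, $|G:X|\in\{1,p,p^2\}$, and to treat the two implications separately; the case $|G:X|=1$ is immediate, since then $X=Y=G$.

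For $(2)\Rightarrow(1)$ I would exploit the $A$-equivariant correspondences already at our disposal. If $|G:X|=p$, then $X$ and $Y$ are the kernels (Definition \ref{def:kerT,c-idx}) of nonzero classes $[c],[d]\in\hab(G;\F_p)$ via the bijection $\mathfrak{t}_G$ of \eqref{eq:TauMap}, and $\lL([c])=\lL(X)$, $\lL([d])=\lL(Y)$ by Definition \ref{def:lLfunctions}. The hypothesis $\lL(X)=\lL(Y)$ together with Proposition \ref{prop:abelian} then gives $[c]\sim_A[d]$, whence $X\sim_A Y$ because $\mathfrak{t}_G$ respects the $A$-action (Corollary \ref{cor:stabT}). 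If $|G:X|=p^2$, the same argument runs with $\mathfrak{m}_G$ from \eqref{eq:correspondenceM} in place of $\mathfrak{t}_G$ and Proposition \ref{prop:lL} in place of Proposition \ref{prop:abelian}. In either case $X\sim_A Y$ produces $\sigma\in\Aut(G)$ with $\sigma(X)=Y$, and in particular $X\cong Y$.

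The implication $(1)\Rightarrow(2)$ is the genuine content, the difficulty being that the levels are defined through the ambient embedding $X\subseteq G$, so one must check they are determined by the abstract isomorphism type of $X$. To this end I would introduce, for a finite abelian $p$-group $H$, the isomorphism invariants $c_m(H)=\dim_{\F_p}(p^{m-1}H/p^mH)$, which count the cyclic factors of $H$ of order at least $p^m$ and jointly determine $H$ up to isomorphism. The key computation is that, using a convenient orbit representative (built as in Lemma \ref{lemma:xyA-orbits} for index $p^2$, and the analogue of Example \ref{ex:CB-ok} for index $p$), the subgroup $X$ is obtained from $G$ by replacing a cyclic factor of order $p^{\ell(X)}$ by one of order $p^{\ell(X)-1}$ and, when $|G:X|=p^2$, a further factor of order $p^{\L(X)}$ by one of order $p^{\L(X)-1}$. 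Consequently $c_m(X)=c_m(G)$ for every $m\notin\{\ell(X),\L(X)\}$, while $c_m$ strictly drops exactly at $m=\ell(X)$ and $m=\L(X)$: by $1$ at each when $\ell(X)\ne\L(X)$, by $2$ when $\ell(X)=\L(X)$ and $|G:X|=p^2$, and by $1$ when $\ell(X)=\L(X)$ and $|G:X|=p$. Reading off the orders at which $c_m(X)<c_m(G)$ thus recovers $\lL(X)$ from the intrinsic data $c_\bullet(X)$ and the fixed data $c_\bullet(G)$.

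With this in hand the conclusion is immediate: if $X\cong Y$, then $|G:X|=|G:Y|$ and $c_m(X)=c_m(Y)$ for all $m$, so the two subgroups lose factors from $G$ at exactly the same orders and with the same multiplicities, forcing $\lL(X)=\lL(Y)$. I expect the main obstacle to be precisely this computation of $c_m(X)$ from a representative, and in particular the bookkeeping of the regimes $\ell(X)<\L(X)$ and $\ell(X)=\L(X)$ so that the decrement pattern injectively encodes $\lL(X)$; everything else reduces to the equivariant machinery already developed.
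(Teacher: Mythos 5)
Your proof is correct. The direction $(2)\Rightarrow(1)$ is exactly the paper's argument: both reduce to the $A$-equivariant bijections $\mathfrak{t}_G$ and $\mathfrak{m}_G$ together with Propositions \ref{prop:abelian} and \ref{prop:lL} and Proposition \ref{prop:LevelsTindices}(2). For $(1)\Rightarrow(2)$, however, you take a genuinely different route. The paper argues by contradiction using only the cardinalities of the torsion subgroups: if $\ell(X)<\ell(Y)$ then $|X[p^{\ell(Y)-1}]|<|Y[p^{\ell(Y)-1}]|$, and if $\L(X)<\L(Y)$ then $|X:X[p^{\L(X)}]|\neq|Y:Y[p^{\L(X)}]|$; each of these is an isomorphism invariant, so the levels must agree. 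You instead compute the full isomorphism type of $X$: from Lemma \ref{lemma:xyA-orbits} one gets $G=\gen{x}\oplus\gen{y}\oplus\tilde{M}$ with $\gen{px}\oplus\gen{py}\oplus\tilde{M}\subseteq X$ and equality by an index count, so $X$ is $G$ with a cyclic factor of order $p^{\ell(X)}$ and one of order $p^{\L(X)}$ each lowered by one, and the decrement pattern of your invariants $c_m$ then encodes $\lL(X)$. This is heavier --- in the index-$p$ case you need the analogous splitting, which is not literally Lemma \ref{lemma:xyA-orbits} but follows from the Section \ref{sec:ReductionProblem} reduction together with the $A$-invariance of both the levels and the isomorphism type (Proposition \ref{prop:LevelsTindices}), as you indicate --- but it buys strictly more: it identifies the isomorphism class of $X$ explicitly in terms of $\lL(X)$ and makes the equivalence of (1) and (2) transparent in both directions at once, whereas the paper's argument only rules out a disagreement of levels. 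Your bookkeeping of the regimes $\ell(X)=\L(X)$ versus $\ell(X)<\L(X)$ (decrement $2$ at one position versus $1$ at two positions, with $\L=\ell$ forced in the index-$p$ case by Example \ref{ex:index0}(2)) is exactly what is needed for the pattern to determine $\lL(X)$ injectively, so there is no gap.
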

	
	\noindent
	It would be interesting to generalize the concept of $G$-levels in order to get a version of Corollary \ref{cor:lLiso} holding for all subgroups of $G$ containing $pG$.
	
	\begin{definition}\label{def:lLfunctions}
		Let $[c],[d]$ be elements of $\hab(G;\F_p)\cup\im\cup$ and let $K$ and $H$ be the kernels of $[c]$ resp.\ $[d]$ in $G$.
		Let $T$ be a subgroup of $G$. Then the $T$-levels and $[d]$-levels of $[c]$ are defined respectively as
		$$\lL_T([c])=\lL_T(K) \textup{ and } \lL_d([c])=\lL_H(K).$$
		If $T=G$ or $[d]=0$, simply write $\lL([c])=(\ell([c]),\L([c]))$ for $\lL_G([c])$ and $\lL_0([c])$.
	\end{definition}
	
	\noindent
	Below, we give some properties of $T$-levels.

	\begin{lemma}\label{lemma:lLbasic}
		Let $M$ and $T$ be subgroups of $G$. 
		Then the following hold:
		\begin{enumerate}[label=$(\arabic*)$]
			\item $1\leq \ell_T(M)$ and $\L_T(M)\leq \log_p\exp(T)$;
			\item if $z\in T\setminus (M\cap T)$, then $|z|\geq p^{\ell_T(M)}$; 
			\item if $\L_T(M)=\ell_T(M)-1$, then $T=\graffe{0}$;
			\item if $T\not\subseteq M$, then $\ell_T(M)-1<\L_T(M) $.
		\end{enumerate}
	\end{lemma}
	
	\begin{proof}
		Set $l_T=\ell_T(M)$ and $L_T=\L_T(M)$. 
		Items (1)-(2) are straightforward. To prove (3)-(4), we start by observing that, when $T$ is contained in $M$, then $l_T=\log_p\exp(T)+1$ and $L_T=0$. Assume now that $M$ does not contain $T$. Then $T\neq\graffe{0}$, $L_T\neq 0$, and, since $T[p^{l_T-1}]$ is contained in $M$ but $T[p^{L_T}]$ is not, we have that $l_T-1\lneq L_T$.
	\end{proof}

	\begin{lemma}\label{lemma:levelsTlevels}\label{lemma:GTLevels}
		Let $M$ and $T$ be subgroups of $G$ such that $|G:T|=p$ and $|G:M|=p^2$.
		Define $\lL(M)=(l,L)$ and $\lL_T(M)=(l_T,L_T)$. Then the following inequalities hold:
		\[
		l\leq l_T\leq \min\graffe{ L, L_T}=\begin{cases}
			L & \textup{ if } M \not\subseteq T,\\
			L_T & \textup{ if } M \subseteq T.
		\end{cases}
		\]
	\end{lemma}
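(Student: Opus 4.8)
The plan is to prove the three inequalities separately and then read off the value of the minimum from a short case distinction. Throughout I keep the notation $l=\ell(M)$, $L=\L(M)$, $l_T=\ell_T(M)$, $L_T=\L_T(M)$ from the statement, and I begin by recording two elementary consequences of the index hypotheses that will be used repeatedly: since $|G:T|=p\neq p^2=|G:M|$ we have $M\neq T$, and moreover $T\not\subseteq M$ (otherwise $p=|G:T|\geq|G:M|=p^2$, which is absurd). The containment $T\not\subseteq M$ is precisely what allows me to invoke Lemma \ref{lemma:lLbasic}.

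First I would establish $l\leq l_T$. By definition $G[p^{l-1}]\subseteq M$, and I claim $l-1\leq\log_p\exp(T)$: if this failed, then $p^{l-1}>\exp(T)$ would force $T[p^{l-1}]=T$, whence $T=T\cap G[p^{l-1}]\subseteq M$, contradicting $T\not\subseteq M$. Thus $l-1$ lies in the admissible range, and since $T[p^{l-1}]=T\cap G[p^{l-1}]\subseteq M\cap T$, the index $l-1$ is counted in the definition of $\ell_T(M)$, giving $l_T\geq l$. Next, $l_T\leq L_T$ is immediate from Lemma \ref{lemma:lLbasic}(4) applied to the pair $(M,T)$, since $T\not\subseteq M$ gives $\ell_T(M)-1<\L_T(M)$.

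The crux is the inequality $l_T\leq L$, which bridges the behaviour of $M\cap T$ inside $T$ and that of $M$ inside $G$. Writing $i=l_T-1$, so that $T[p^i]\subseteq M$, it suffices to show $G[p^i]+M\neq G$, equivalently $i<L$. Here I would split on whether $G[p^i]\subseteq T$. If $G[p^i]\subseteq T$, then $G[p^i]=T\cap G[p^i]=T[p^i]\subseteq M$, so $G[p^i]+M=M\neq G$. If $G[p^i]\not\subseteq T$, then maximality of $T$ gives $G[p^i]+T=G$, hence $|G[p^i]:T[p^i]|=|G:T|=p$; as $T[p^i]\subseteq M$ this yields $|G[p^i]:G[p^i]\cap M|\leq p<p^2=|G:M|$, so again $G[p^i]+M\neq G$. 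In either case $i<L$, as required.

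Finally I would compare $L$ and $L_T$. If $M\subseteq T$, then $M\cap T=M$, and intersecting the identity $G[p^L]+M=G$ with $T$ via the modular law (using $M\subseteq T$) gives $(G[p^L]\cap T)+M=T$, that is $T[p^L]+(M\cap T)=T$, so $L_T\leq L$ and $\min\{L,L_T\}=L_T$. If $M\not\subseteq T$, then maximality of $T$ gives $M+T=G$; substituting $T=T[p^{L_T}]+(M\cap T)$ and absorbing $M\cap T\subseteq M$ yields $G=M+T[p^{L_T}]\subseteq M+G[p^{L_T}]\subseteq G$, so $G[p^{L_T}]+M=G$ and $L\leq L_T$, whence $\min\{L,L_T\}=L$. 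Together with $l_T\leq L$ and $l_T\leq L_T$ this gives $l_T\leq\min\{L,L_T\}$ and the stated evaluation. I expect the middle inequality $l_T\leq L$ to be the main obstacle: the outer bounds follow from the definitions or directly from Lemma \ref{lemma:lLbasic}, whereas this one genuinely requires the index bookkeeping relating $G[p^i]$, $T[p^i]=T\cap G[p^i]$, and $M$, and in particular the observation that $T[p^i]$ sits with index at most $p$ inside $G[p^i]$.
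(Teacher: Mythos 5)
Your proof is correct and follows essentially the same route as the paper's: the outer inequalities come from the definitions and Lemma \ref{lemma:lLbasic}, the middle inequality $l_T\leq L$ rests on the same index comparison of $|G[p^{\bullet}]:T[p^{\bullet}]|\leq p$ against $|G:M|=p^2$ (you argue directly at level $l_T-1$ where the paper argues by contradiction at level $L$), and the final case split via the modular law for $M\subseteq T$ respectively $M+T=G$ for $M\not\subseteq T$ is identical. The only addition is your check that $l-1\leq \log_p\exp(T)$, a detail the paper's proof of $l\leq l_T$ leaves implicit.
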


	\begin{proof}
		The inequality $l\leq l_T$ follows from the fact that, for each $m\in\Z_{\geq 0}$, one has 
		\[
		G[p^m]\subseteq M \Longrightarrow T[p^m]=G[p^m]\cap T\subseteq M\cap T.
		\] 
		We now show that $l_T\leq L$. For a contradiction, assume that $l_T>L$. It follows from the definition of $\lL_T(M)$ that $T[p^L]$ is contained in $M\cap T$. In particular, since $T[p^L]=G[p^{L}]\cap T$, we have that $T[p^L]$ is contained in $M\cap G[p^L]$. Now, since $|G:T|=p$, we get that $|G[p^L]:T[p^L]|\leq p$ and consequently
		\begin{align*}
			p^2=|G:M|&=|(M+G[p^L]):M|=|G[p^L]:(M\cap G[p^L])|\leq |G[p^L]:T[p^L]|\leq p
		\end{align*}
		providing a contradiction. So we have proven that $l_T\leq L$. The inequality $l_T\leq L_T$ follows from Lemma \ref{lemma:lLbasic} and thus yields that $l_T\leq \min\graffe{L,L_T}$.
		
		For the last equality, assume first that $M$ is contained in $T$.
		Since $G=M+G[p^L]$, we have that $T\cap G=T=M+T[p^{L}]$. By definition of $L_T$, we have $L_T\leq L$.
		To conclude, assume that $M$ is not contained in $T$. From
		\[
		T=(M\cap T)+T[p^{L_T}]=(M+T[p^{L_T}])\cap T
		\]
		we get that $G=M+T[p^{L_T}]=M+G[p^{L_T}]$. It follows from the minimality of $L$ that $L\leq L_T$.
	\end{proof}

\clearpage
	\begin{ex}\label{ex:index1}~
		\begin{enumerate}[label=$(\arabic*)$]
			\item Assume that $G$ is a free $\Z/(p^3)$-module of rank $2$. Then $M=G[p^2]=pG$ has index $p^2$ in $G$ and is contained in every maximal subgroup of $G$. For each $T$ maximal in $G$, it then holds that $3=\ell(M)=\ell_T(M)=\L_T(M)=\L(M)$.
			\item Let 
			$
			G=\Z/(p)\oplus\Z/(p^2)\oplus\Z/(p^3)\oplus\Z/(p^4)=\langle \gamma_{11}, \gamma_{21}, \gamma_{31}, \gamma_{41} \rangle,
			$ and define
			\begin{align*}
				T&=
				\gen{\gamma_{21}, \gamma_{31}, \gamma_{41}}=\graffe{(0,t_2,t_3,t_4) \mid t_i\in\Z_p}\subseteq G, \\
				M&=\gen{\gamma_{11}-\gamma_{31}-\gamma_{41},\gamma_{21}}+pG=\graffe{(m_1,m_2,-m_1+pm_3,-m_1+pm_4) \mid m_i\in\Z_p}\subseteq G.
			\end{align*}
			It follows that $M\cap T=\gen{\gamma_{21}}+pG$ and therefore $1=\ell(M)<3=\ell_T(M)=\L(M)<4=\L_T(M)$. We conclude by observing that, since $M$ is not contained in $T$, the $c$-index of $M$ in $G$ will be $1$ for each $[c]\in\hab(G;\F_p)$ that realizes $T$ in the sense of \eqref{eq:TauMap}.
		\end{enumerate}
	\end{ex}

	\subsection{Compatibility}\label{subsec:compatibility}
	
	The following proposition collects a number of basic properties shared by elements belonging to the same $A$-orbit.

	\begin{proposition}\label{prop:LevelsTindices}
		Let $[c],[d]\in\hab(G;\F_p)$ and let $T_c$ and $T_d$ denote respectively the kernels of $[c]$ and $[d]$.  Let, moreover, $a=(\sigma,\lambda)\in A$ be such that $[d]=a\cdot [c]$. Let $[\omega]=[f\cup g]\in\im\cup$ and let $M=M_{\omega}$ be the kernel of $[\omega]$. Define
		\[
		f_a=\lambda f \sigma^{-1} \textup{ and } g_a=g\sigma^{-1}
		\]
		and let $M_a$ be the kernel of $[\omega_a]=[f_a\cup g_a]$. Then 
		$a\cdot ([c]+[\omega])=[d]+[\omega_a]$ and the following hold:
		\begin{enumerate}[label=$(\arabic*)$]
			\item The following maps are inverses to each other:
			\begin{align*}
				\phi: \im\cup/A_c \longrightarrow \im\cup/A_{d}, & \quad  A_c[\omega] \longmapsto A_{d}(a\cdot[\omega]), \\
				\psi: \im\cup/A_{d} \longmapsto \im\cup/A_c 
				& \quad A_{d}[\omega] \mapsto A_c(a^{-1}\cdot[\omega]). 
			\end{align*}
			\item $T_d=\sigma(T_c)$ and $M_a=\sigma(M)$.
			\item $\lL(M)=\lL(M_a)$, $\lL_c(M)=\lL_d(M_a)$, $\lL(T_c)=\lL(T_d)$ and $i_c(M)=i_d(M_a)$.
		\end{enumerate}
	\end{proposition}
	
	\begin{proof}
		To show that $a\cdot ([c]+[f\cup g])=[d]+[f_a\cup g_a]$ is an easy computation. 
		
		(1) Straightforward. 
		
		(2) That $\sigma(M)=M_a$ is a straight consequence of Lemma \ref{lemma:kernels}. We prove that $\sigma(T_d)=T_c$.
		We use the bar notation for the subspaces of $V=G/pG$ and we refer to the notation in \eqref{eq:commdiag} and \eqref{eq:ActionAonV}.
		The map $\sigma$ being an isomorphism, it follows from Lemma \ref{lem:phi4Ainv} that
		\begin{align*}
			\overline{T_d} & = \ker(\phi_4^{-1}([d])) =\ker(\phi_4^{-1}(a\cdot [c]))\\
			& = \ker(a\cdot\phi_4^{-1}([c])) = \ker(\lambda \phi_4^{-1}([c])\circ\overline{\sigma}^{-1}) \\
			& = \ker(\phi_4^{-1}([c])\circ \overline{\sigma}^{-1})= \overline{\sigma} \ker(\phi_4^{-1}([c]))\\
			& = \overline{\sigma}(\overline{T_c}).
		\end{align*}
		Lifting everything back to $G$, we get $T_d=\sigma(T_c)$.
		
		(3) This is a direct consequence of (2) and Definitions \ref{def:kerT,c-idx} and \ref{def:kerM,c-idx}.
	\end{proof}
	
	\noindent
	In the next result, let $\mathfrak{t}_G$ and $\mathfrak{m}_G$ denote respectively the maps from \eqref{eq:TauMap} and \eqref{eq:correspondenceM}. For each $k\in\graffe{1,\ldots,d}$, we write moreover
	\begin{align*}
		\cor{S}_G^{(k)} & = \graffe{ \pi^{-1}(W) \mid W \textup{ subspace of codimension } k \textup{ of } V} \\
		& = \graffe{ K \textup{ subgroup of } G \textup{ with } G/K \textup{ elementary abelian of rank } k}
	\end{align*}
	and note that the action of $A$ given in \eqref{eq:ActionAonSbgs} naturally induces an action of $A$ on each $\cor{S}_G^{(k)}$ and, component-wise, on any of their products.
	
	\begin{corollary}\label{cor:stabT}
		The following is an isomorphism of $A$-sets:
		\[\mathfrak{(t,m)}_G:\mathbb{P}\hab(G;\F_p)\times \mathbb{P}\im\cup \longrightarrow \cor{S}_G^{(1)}\times \cor{S}_G^{(2)}, \quad ([c],[\omega])\longmapsto (\mathfrak{t}_G([c]),\mathfrak{m}_G([\omega])).\]
		Moreover, for each $[c]\in\hab(G;\F_p)\setminus\graffe{0}$ with kernel $T$, the stabilizer $A_c$ is a subgroup of $A_T$ 
		of index $|A_T:A_{c}|=p-1$.
	\end{corollary}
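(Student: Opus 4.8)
The plan is to treat the two assertions separately, building the first on the equivariance results already established and the second on an orbit--stabilizer count for the fibre of $\mathfrak{t}_G$ above $T$.

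For the isomorphism of $A$-sets, I would first recall that $\mathfrak{t}_G$ from \eqref{eq:TauMap} and $\mathfrak{m}_G$ from \eqref{eq:correspondenceM} are bijections onto $\cor{S}_G^{(1)}$ and $\cor{S}_G^{(2)}$ respectively, so their product is automatically a bijection; it remains only to check equivariance component-wise. The $\mathfrak{m}_G$-component is $A$-equivariant by Corollary~\ref{cor:mmap}. For the $\mathfrak{t}_G$-component, equivariance is exactly the statement $T_d=\sigma(T_c)$ whenever $[d]=(\sigma,\lambda)\cdot[c]$, which is Proposition~\ref{prop:LevelsTindices}(2). Here one should note that the relevant action on the source is the projective one from \eqref{eq:ActionAonPH^2}, and indeed the computation behind Proposition~\ref{prop:LevelsTindices}(2) shows that the scalar $\lambda$ disappears upon passing to $\ker(\phi_4^{-1}([c])\circ\overline\sigma^{-1})$; compatibility with the subgroup action \eqref{eq:ActionAonSbgs} then follows. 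Combining the two components yields the desired isomorphism of $A$-sets.

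For the index claim, fix $[c]\in\hab(G;\F_p)\setminus\{0\}$ with kernel $T$. I would first show $A_c\subseteq A_T$: if $(\sigma,\lambda)$ stabilizes $[c]$ under \eqref{eq:ActionAonHomToK}, then it stabilizes the associated projective class, so by the equivariance of $\mathfrak{t}_G$ just established we get $\sigma(T)=T$, i.e.\ $(\sigma,\lambda)\in A_T$. Next I would identify the fibre of $\mathfrak{t}_G$ above $T$: since $\mathfrak{t}_G$ is a bijection on $\mathbb{P}\hab(G;\F_p)$, the set of nonzero classes with kernel $T$ is precisely the punctured line $\{\mu[c]:\mu\in\F_p^*\}$, of cardinality $p-1$. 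The group $A_T$ preserves this fibre, because $(\sigma,\lambda)\in A_T$ forces $T_{(\sigma,\lambda)[c]}=\sigma(T)=T$. I would then show that $A_T$ acts transitively on it: for each $\mu\in\F_p^*$, lift $\mu$ to $\lambda\in\Z_p^*$ and consider $(\id,\lambda)\in A_T$, which by \eqref{eq:ActionAonHomToK} sends $[c]$ to $[\lambda c]=\mu[c]$, so every element of the fibre is hit. Since the stabilizer of $[c]$ inside $A_T$ is $A_c\cap A_T=A_c$, the orbit--stabilizer theorem gives $|A_T:A_c|=p-1$.

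The routine parts are the bijectivity and the orbit count; the one point demanding care is the bookkeeping between the affine action \eqref{eq:ActionAonHomToK} defining $A_c$ and the projective action \eqref{eq:ActionAonPH^2} governing $\mathfrak{t}_G$. One must verify that passing to the projective class does not enlarge the stabilizer, which is precisely where Lemma~\ref{lem:lambda=1}(1) does the decisive work: it guarantees that a scalar $\lambda$ fixes the nonzero class $[c]$ only when $\lambda=1$, so that distinct $\mu$ yield distinct images and the orbit genuinely has size $p-1$.
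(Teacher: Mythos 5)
Your argument is correct and follows the same route as the paper: the paper deduces the $A$-set isomorphism from Corollary~\ref{cor:mmap} and Proposition~\ref{prop:LevelsTindices}(2) exactly as you do, and then asserts the index claim with a one-line ``therefore'' whose intended justification is precisely your orbit--stabilizer argument on the fibre $\{\mu[c]:\mu\in\F_p^*\}$ of $\mathfrak{t}_G$ over $T$. Your explicit attention to the discrepancy between the action \eqref{eq:ActionAonHomToK} and the projective action \eqref{eq:ActionAonPH^2}, resolved via Lemma~\ref{lem:lambda=1}(1), is a welcome filling-in of a detail the paper leaves implicit.
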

	
	\begin{proof}
		The map $\mathfrak{(t,m)}_G$ is an isomorphism of $A$-sets as a consequence of Corollary \ref{cor:mmap} and Proposition \ref{prop:LevelsTindices}(2).
		Therefore, we get that, for each element $[c]\in \hab(G;F_p)$, if $T=\mathfrak{t}_G([c])$, then $A_c\subseteq A_T$ and $|A_T:A_c|=p-1$.
	\end{proof}
	
	\noindent
	We point out the connection between Corollary \ref{cor:stabT} and Lemma \ref{lem:lambda=1}. The last corollary clearly describes the projective nature of the orbits in terms of subgroups of $G$. It would be interesting to know whether the map $\mathfrak{(t,m)}_G$ can be extended to the whole of $\mathbb{P}\hc^2(G;\F_p)$; see also Section \ref{subsec:moregens}.

	\section{Abelian extensions}\label{section AbelianExtensions}
	
	In this section we classify the $A$-orbits of $\hab(G;\F_p)$ via classifying the $A$-orbits 
	in $\Hom(G,C/p^nC)/\ker\beta$, where $\beta$ is the homomorphism introduced in Section \ref{subsec:bockstein}.
	We also show that, under our assumptions, strong isomorphism classes and isomorphism types of extensions of $G$ by $\F_p$ coincide. 
	
	Until the end of Section \ref{section AbelianExtensions}, the following assumptions will hold. For $j\in\graffe{1,\ldots, t}$ and $k\in\graffe{1,\ldots, n_j}$, let $\gamma_{jk}^*$ be the dual of $\gamma_{jk}$ as defined in \eqref{eq:psi2tilde}, within Section \ref{subsec:Pontryagin}. 
	For $j\in\graffe{1,\ldots,t}$, define moreover $\gamma^*_j=\gamma_{j1}^*$ and denote by $\widehat{\pi}_j$ the natural projection $\widehat{G}=\bigoplus_{j=1}^t\widehat{I_j}\rightarrow \widehat{I_j}$.
	The next proposition is the main result of the current section.
	
	\begin{proposition}\label{prop:abelian}
		Let $[c],[d]\in\hab(G;\F_p)$. Then the following are equivalent:
		\begin{enumerate}[label=$(\arabic*)$]
			\item $[c]\sim_A [d]$;
			\item $[c]=[d]=0$ or there exists $i\in\graffe{1,\ldots, t}$ such that $[c]\sim_A\beta(\gamma_i^*)\sim_A[d]$;
			\item $\lL([c])=\lL([d])$;
			\item $\ell([c])=\ell([d])$;
			\item $\L([c])=\L([d])$.
		\end{enumerate} 
	\end{proposition}

	\subsection{Bockquivalence relation}\label{subsec:Bockquivalence}
	
	In this section we prove Proposition \ref{prop:abelian} via studying the action of $A$ on $\widehat{G}=\Hom(G,C/(p^nC))$. We recall from Section \ref{subsec:bockstein} that, since $\beta$ respects the action of $A$, Lemma \ref{lem:ImBeta=Hab} yields that the $A$-orbits of $\hab(G;\F_p)$ are in natural bijection with the $A$-orbits of $\widehat{G}/\ker\beta$.
	
	\begin{definition}[Bockquivalence relation]
		Two elements $f,g\in \widehat{G}$ are \emph{Bockquivalent}, written $f\approx_G g$, if there exist $(\sigma,\lambda)\in A$ and $\varepsilon\in \pi_B(G)$ such that $g=\lambda f\sigma^{-1}+\varepsilon$. 
	\end{definition} 
	
	\noindent
	The just defined Bockquivalence relation is clearly an equivalence relation, because it describes the $A$-orbits of  $\Hom(G,C/(p^nC))/\ker\beta$.
	We will refer to the corresponding equivalence classes as \emph{Bockquivalence classes} and, if $f\in \Hom(G, C/(p^n C))$, we will write $\llbracket f\rrbracket$ to denote the Bockquivalence class of $f$. Our immediate goal is to determine representatives for the Bockquivalence classes of $G$.
	
	\begin{proposition}\label{prop representatives}
		Let $\Gamma=\graffe{\gamma^*_j : j=1,\ldots,t}\cup\graffe{0}$. Then $\Gamma$ is a set of representatives for the Bockquivalence classes of $G$ and, for each $j\in\graffe{1,\ldots,t}$, the following equality holds:
		\[
		\llbracket \gamma^*_j\rrbracket=\graffe{c\in\widehat{G} : |\im\widehat{\pi}_j(c)|=p^{n_j},|\im\widehat{\pi}_l(c)|<p^{n_l} \textup{ for }  l>j}.
		\]
		Moreover, $G$ has exactly $t+1$ Bockquivalence classes. 
	\end{proposition}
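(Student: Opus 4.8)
The plan is to produce the $t+1$ sets
\[
S_0=\pi_B(G), \qquad S_j=\graffe{c\in\widehat{G} : |\im\widehat{\pi}_j(c)|=p^{n_j},\ |\im\widehat{\pi}_l(c)|<p^{n_l}\textup{ for }l>j}\quad(1\le j\le t),
\]
and to show that they partition $\widehat{G}$, that each is a single Bockquivalence class, and that $0$ and the $\gamma^*_j$ are representatives. That $\widehat{G}=S_0\sqcup S_1\sqcup\cdots\sqcup S_t$ is immediate: writing $J(c)=\graffe{j : |\im\widehat{\pi}_j(c)|=p^{n_j}}$, every $c$ lies in $S_{\max J(c)}$ (reading $\max\emptyset$ as $0$), and $S_0=\pi_B(G)=\llbracket 0\rrbracket$ by Lemma~\ref{lemma pi_B(G)}$(3)$ together with the definition of $\approx_G$. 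It then remains to prove $(A)$ each $S_j$ is $\approx_G$-invariant and $(B)$ every $c\in S_j$ satisfies $c\approx_G\gamma^*_j$; with disjointness these force $S_j=\llbracket\gamma^*_j\rrbracket$ and give exactly $t+1$ classes.

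For $(A)$ the crucial point is that $\max J(c)$ is intrinsic. I would set $\nu(c)=\max\graffe{|c(x)| : x\in G,\ |c(x)|=|x|}$, the largest order attained by an \emph{order-preserving} element (so $\nu(c)=1$ exactly when $c\in\pi_B(G)$, by Lemma~\ref{lemma pi_B(G)}$(2)$), and prove the identity $\nu(c)=p^{n_{\max J(c)}}$. The inequality $\ge$ is easy: if $j\in J(c)$ then some $\gamma_{jk}$ has $|c(\gamma_{jk})|=p^{n_j}=|\gamma_{jk}|$. For $\le$, given order-preserving $x$ of order $p^m$, decompose $c(x)=\sum_l\widehat{\pi}_l(c)(x_l)$; since every summand has order at most $p^m$ and the sum has order $p^m$, some component $\widehat{\pi}_{j_0}(c)(x_{j_0})$ has order exactly $p^m$, and a short $p$-adic valuation computation on the coefficients of $x_{j_0}$ and of $c(\gamma_{j_0 k})$ forces some $c(\gamma_{j_0 k})$ to be a generator, i.e.\ $j_0\in J(c)$ with $n_{j_0}\ge m$. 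Now $\nu(\lambda c)=\nu(c)$ because $\lambda$ is a unit, $\nu(c\sigma^{-1})=\nu(c)$ because $x\mapsto\sigma^{-1}x$ is an order-preserving bijection and $|c\sigma^{-1}(x)|=|c(\sigma^{-1}x)|$, while $J(c+\varepsilon)=J(c)$ for $\varepsilon\in\pi_B(G)$ (subgroups of the cyclic group $(C/p^nC)[p^{n_j}]$ are totally ordered, so adding the non-surjective term $\widehat{\pi}_j(\varepsilon)$ neither creates nor destroys surjectivity of $\widehat{\pi}_j$). Hence $\max J$, and so each $S_j$, is $\approx_G$-invariant. I stress that $J(c)$ itself is \emph{not} $\sigma$-invariant — an automorphism can move content between summands — and it is precisely the identity $\nu=p^{n_{\max J}}$, which uses that $n_1<\cdots<n_t$ are pairwise distinct, that rescues the maximum.

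For $(B)$ I would normalise $c\in S_j$ to $\gamma^*_j$ in three moves. First, as $|\im\widehat{\pi}_l(c)|<p^{n_l}$ for $l>j$, the map $\varepsilon=-\sum_{l>j}\widehat{\pi}_l(c)$ lies in $\pi_B(G)$ by Lemma~\ref{lemma pi_B(G)}$(3)$, so replacing $c$ by $c+\varepsilon\approx_G c$ we may assume $c$ is supported on $\bigoplus_{l\le j}I_l$ with $\widehat{\pi}_j(c)$ still surjective onto $\gen{p^{n-n_j}\gamma}$. Second, a surjection $I_j\to\Z/(p^{n_j})$ is carried to $\gamma^*_j|_{I_j}$ by some $(\sigma_j,\lambda)\in\Aut(I_j)\times\Z_p^*$ via unimodular-row reduction, so Lemma~\ref{lemma replacement} with $H=I_j$, $K=\bigoplus_{l<j}I_l$ lets us assume $c=\gamma^*_j|_{I_j}+c_{<j}$ without touching the lower part. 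Third, I kill $c_{<j}$ one generator at a time: for $l<j$ each value $c(\gamma_{lk})\in\gen{p^{n-n_l}\gamma}$ can be written $a'p^{n-n_l}\gamma$, and the transvection $\sigma:\gamma_{lk}\mapsto\gamma_{lk}+a'p^{n_j-n_l}\gamma_{j1}$ — a genuine automorphism since $a'p^{n_j-n_l}\gamma_{j1}$ has order dividing $|\gamma_{lk}|=p^{n_l}$ (here $l<j$ gives $n_l\le n_j$) — satisfies $(c\sigma^{-1})(\gamma_{lk})=c(\gamma_{lk})-a'p^{n_j-n_l}c(\gamma_{j1})=0$ while fixing $c(\gamma_{j1})=p^{n-n_j}\gamma$ and every other value. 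Composing these over all $l<j$ and all $k$ yields $c\approx_G\gamma^*_j$.

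Combining $(A)$ and $(B)$ with the partition gives $S_j=\llbracket\gamma^*_j\rrbracket$ for $1\le j\le t$ and $S_0=\llbracket 0\rrbracket$; since these $t+1$ classes are distinct (their invariants $\nu=1,p^{n_1},\dots,p^{n_t}$ are distinct) and cover $\widehat{G}$, it follows that $\Gamma$ is a transversal, that $G$ has exactly $t+1$ Bockquivalence classes, and that $\llbracket\gamma^*_j\rrbracket$ is exactly the displayed set $S_j$. The main obstacle is the intrinsic identity $\nu(c)=p^{n_{\max J(c)}}$ of step $(A)$: the rest is normalisation, but this is the sole place where one must rule out an automorphism raising the ``top full index'', and it rests entirely on the valuation estimate showing that the order-realising component of an order-preserving element is automatically surjective.
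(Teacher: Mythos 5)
Your proof is correct, and it reorganizes the argument in a way that genuinely differs from the paper's. The paper factors the statement through four lemmas: a base case for free modules (Lemma~\ref{lemma t=1 Bockquivalence classes}), an induction on $t$ for surjective $c$ (Lemma~\ref{lemma 0s at front}), a reduction of the general case to the surjective one by splitting off $\bigoplus_{l>j}I_l$ (Lemma~\ref{lemma bockq to ej}), and a pairwise non-equivalence check of the representatives by evaluating $\gamma^*_i=\lambda\gamma^*_j\sigma^{-1}+\varepsilon$ at a generator of $\sigma(I_{j1})$ (Lemma~\ref{lemma ei not ej}). You keep the same skeleton --- partition $\widehat{G}$ by the maximal index with surjective $j$-th projection, normalise to $\gamma^*_j$, separate the classes --- but (i) your normalisation is non-inductive: strip the blocks above $j$ with a $\pi_B(G)$-correction, reduce the $j$-th block by unimodular-row reduction over the local ring $\Z/(p^{n_j})$, and kill the blocks below $j$ by transvections into $\gamma_{j1}$ (your valuation check that these transvections are well defined and act as claimed on $c$ is correct); and (ii) you replace the paper's pairwise check by the intrinsic invariant $\nu(c)=\max\{|c(x)|:|c(x)|=|x|\}$ together with the identity $\nu(c)=p^{n_{\max J(c)}}$, which is a slightly stronger statement since it shows directly that each $S_j$ is closed under $\approx_G$ rather than only that the chosen representatives are inequivalent; your valuation argument for the $\le$ direction and the observation that $J(c+\varepsilon)=J(c)$ are both sound. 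Both approaches rest on the same two ingredients, Lemma~\ref{lemma replacement} and Lemma~\ref{lemma pi_B(G)}, and both require the exponents $n_1<\dots<n_t$ to be pairwise distinct (otherwise an automorphism permuting blocks of equal exponent identifies $\gamma^*_i$ with $\gamma^*_j$); you flag this explicitly, whereas the paper's displayed hypotheses only read $n_1\le\dots\le n_t$, so your version is, if anything, the more carefully stated one.
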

	
	\begin{proof}
		We start by recalling that $\ker\beta=p\hat{G}$, as given in \eqref{eq: KerBeta}. We will show that the images of the maps $\gamma_i^*$ in the quotient $\hat{G}/p\hat{G}=\hat{G}/\ker\beta$ constitute a set of representatives for the nonzero orbits of the action of $A$ on the last quotient.
		We first show that each nonzero orbit can be represented by one of the $\gamma_i^*$'s. To this end,  for every nontrivial orbit choose a representative  $f\in\hat{G}\setminus p\hat{G}$ of the form
		\[
		f=\sum_{j=1}^t\sum_{k=1}^{r_j}\alpha_{jk}\gamma_{jk}^*, \textup{ with } \alpha_{jk}\in\Z_p^*\cup\{0\}
		\]
		and let $i\in\{1,\ldots,t\}$ be maximal such that there exists $s\in\{1,\ldots,r_i\}$ with $\alpha_{is}\in\Z_p^*$. 
		It follows from the maximality of $i$ that $f(G)$ is generated by $p^{n-n_i}\gamma$ and so the first  isomorphism theorem yields that $G=\gen{\gamma_{is}}\oplus\ker f$. Set now $H=\gen{\gamma_{jk} \mid (j,k)\neq (i,1)}$ and note that $G=\gen{\gamma_{i1}}\oplus H$. Since $\gamma_{i1}$ and $\gamma_{is}$ have the same order, the elementary divisor theorem yields an automorphism $\sigma$ of $G$ sending $\gamma_{is}$ to $\gamma_{i1}$ and $\ker f$ to $H$. As a consequence, $(\sigma,1)f=\gamma_i^*$ and $f$ is in the orbit of $\gamma^{*}_i$. 
		
		We now show that any two $\gamma_i^*$'s represent distinct orbits. To this end, let $i\geq j$ be such that $\gamma_i^*$ and $\gamma_j^*$ represent the same $A$-orbit in $\hat{G}/p\hat{G}$ and let $(\sigma,\lambda)\in A$ and $g\in\hat{G}$ be such that $\gamma_i^*=(\sigma,\lambda)\gamma_j^*+pg$. It follows that 
		\[
		p^{n-n_i}\gamma=\gamma_{i}^*(\gamma_{i1})=(\sigma,\lambda)\gamma_j^*(\gamma_{i1})+pg(\gamma_{i1})
		\]
		and so, by taking orders, we derive that $n_i=\max\{|\gamma_j^*(\sigma^{-1}(\gamma_{i_1}))|,n_i-1\}\leq \max\{n_j,n_i-1\}$. From the fact that $i\geq j$, that is $n_i\geq n_j$, we conclude that $i=j$.
	\end{proof} 
	
	\begin{proof}[Proof of Proposition \ref{prop:abelian}]
		
		$(1)\Leftrightarrow(2)$ This is Proposition \ref{prop representatives}. 
		
		$(2)\Leftrightarrow(3)\Leftrightarrow(4)\Leftrightarrow(5)$ 
		Thanks to Proposition \ref{prop representatives}, a set of representatives of the $A$-orbits of $\hab(G;\F_p)$ is given by
		$0,\beta(\gamma_1^*), \ldots, \beta(\gamma_t^*)$.
		As a consequence of Example \ref{ex:CB-ok} and Proposition \ref{prop:LevelsTindices}(2), the $A$-orbits are uniquely determined by their $G$-levels, which 
		are respectively
		$(n+1,0), (n_1,n_1), \ldots, (n_t,n_t)$.
	\end{proof}

	\subsection{Convenient orbit representatives}\label{sec:ReductionProblem}
	
	The goal of this section is to produce, for each given $[c]\in\hab(G;\F_p)$, a representative of the $A$-orbit of $[c]$ that can be conveniently expressed in terms of the choice of generators we made in Section \ref{subsec:assumptions} and is thus more suitable to computations. We essentially want to be able to regard elements of $\hab(G;\F_p)$ as if they were images of the generators of $\widehat{G}$.
	
	Let $[c]\in\hab(G;\F_p)$ and let $\tilde{c}$ be an element of $\widehat{G}$ such that $[c]=\beta(\tilde{c})$; recall that $\tilde{c}$ exists thanks to Lemma \ref{lem:ImBeta=Hab}. 
	Then, thanks to Proposition \ref{prop:abelian}, there exists $a\in A$ and $$b\in \cor{B}=\graffe{\gamma_{jk} \mid 1\leq j\leq t, 1\leq k\leq n_j}$$ such that, for $\graffe{b_1,\ldots, b_r}=\cor{B}\setminus\graffe{b}$, the following hold
	\[
	\ker(a\cdot\tilde{c})=\bigoplus_{i=1}^r\gen{b_i} \textup{ and } 
	G=\gen{b}\oplus \ker(a\cdot\tilde{c})=\gen{b}\oplus\bigoplus_{i=1}^r\gen{b_i}.
	\]
	Set $\tilde{d}=a\cdot\tilde{c}$ and $[d]=\beta(\tilde{d})=[a\cdot c]$. Let, moreover, $T_c$ and $T_d$ denote the kernels of respectively $[c]$ and $[d]$. Then, thanks to Example \ref{ex:CB-ok}, we know that $T_d=\ker\tilde{d}+pG$ and so we have a very concrete description of $T_d$ in terms of the elements of $\cor{B}$. Moreover, if we are interested in the action of $A_c$ on $\im\cup$, we can as well consider the action of $A_d$ on $\im\cup$, thanks to Proposition \ref{prop:LevelsTindices}(1).

	\subsection{Strong isomorphism}\label{subsec:strongiso}

	We close Section \ref{section AbelianExtensions} by showing that strong isomorphism classes of $G$ by $\F_p$ coincide with isomorphism classes of extensions of $G$ by $\F_p$.
	
	\begin{proposition}\label{prop:SIC=IC}
		Let $E_c$ and $E_d$ be central extensions of $G$ by $\F_p$ represented by the cohomology classes $[c]$ and $[d]$ in $\hc^2(G;\F_p)$, respectively. Then, $E_c$ and $E_d$ are isomorphic if and only if $[c]\sim_A [d]$. 
	\end{proposition}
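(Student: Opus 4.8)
The plan is to reduce everything to Theorem~\ref{th orbits and strong exts}, which provides a natural bijection between strong isomorphism classes of extensions of $G$ by $\F_p$ and the $A$-orbits on $\hc^2(G;\F_p)$; concretely, $E_c\cong_s E_d$ if and only if $[c]\sim_A[d]$. Since strong isomorphism implies isomorphism, the implication $[c]\sim_A[d]\Rightarrow E_c\cong E_d$ is immediate. The whole content lies in the converse: assuming $E_c\cong E_d$ one must upgrade an abstract isomorphism to a strong one (or otherwise force $[c]\sim_A[d]$ directly). As the quotient $E_c/\iota(\F_p)\cong G$ is abelian, the commutator subgroup satisfies $E_c'\subseteq\iota(\F_p)$, and I would split the argument according to whether $E_c$ is abelian.

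First I would treat the non-abelian case. If $E_c$ is non-abelian, then $E_c'\neq\graffe{1}$ and, $\iota(\F_p)$ having prime order $p$, we get $E_c'=\iota(\F_p)$; the same holds for $E_d\cong E_c$. Hence any isomorphism $\phi\colon E_c\to E_d$ sends the characteristic subgroup $E_c'=\iota(\F_p)$ to $E_d'=\iota'(\F_p)$, so $\phi$ is automatically a strong isomorphism, and Theorem~\ref{th orbits and strong exts} then yields $[c]\sim_A[d]$.

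The remaining, and main, case is when $E_c$ is abelian. Being abelian is simultaneously an isomorphism invariant of $E_c$ and an $A$-invariant of the class (equivalently $[c]\in\hab(G;\F_p)$, which is $A$-stable by Lemma~\ref{lem:ImBeta=Hab}), so here $[c],[d]\in\hab(G;\F_p)$. By Proposition~\ref{prop:abelian} together with Proposition~\ref{prop representatives}, the set $\hab(G;\F_p)$ consists of exactly $t+1$ distinct $A$-orbits, represented by $0,[\beta(\gamma_1^*)],\ldots,[\beta(\gamma_t^*)]$. I would then compute the isomorphism type of the abelian extension attached to each representative: the class $0$ gives the split extension $E_0\cong G\oplus\F_p$, while using the explicit Bockstein cocycle (as in Example~\ref{ex:CB-ok}) one checks that $[\beta(\gamma_j^*)]$ yields the group obtained from $G$ by replacing one cyclic summand $I_{j1}\cong\Z/(p^{n_j})$ with a cyclic group of order $p^{n_j+1}$, the generator of $\F_p$ becoming its $p^{n_j}$-th power.

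It then remains to see that these $t+1$ abelian groups are pairwise non-isomorphic. The extension $E_0$ has $\dG(G)+1$ invariant factors while every $E_{\beta(\gamma_j^*)}$ has $\dG(G)$, so $E_0$ is distinct from all the others; and for $i\neq j$ the groups $E_{\beta(\gamma_i^*)}$ and $E_{\beta(\gamma_j^*)}$ are separated by the invariant $N(m)=\#\graffe{\textup{invariant factors of order}\geq p^m}$, which exceeds that of $G$ by exactly one at $m=n_i+1$ respectively $m=n_j+1$, and $n_i\neq n_j$. Since $[c]\sim_A[c']$ implies $E_c\cong_s E_{c'}$ and hence $E_c\cong E_{c'}$, writing $[c]\sim_A[c']$ and $[d]\sim_A[d']$ with $c',d'$ among the representatives, the chain $E_{c'}\cong E_c\cong E_d\cong E_{d'}$ forces $c'=d'$, whence $[c]\sim_A[d]$. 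The principal obstacle is precisely this abelian case: it requires pinning down the isomorphism type of each $E_{\beta(\gamma_j^*)}$ from the Bockstein cocycle and verifying that the resulting $t+1$ partitions are distinct, whereas the non-abelian case comes essentially for free from the characteristicity of the commutator subgroup.
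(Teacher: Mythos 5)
Your proof is correct and follows the same skeleton as the paper's: the forward direction via Theorem~\ref{th orbits and strong exts}, the non-abelian case via the observation that the commutator subgroup equals $\iota(\F_p)$ and is therefore preserved by any isomorphism, and the abelian case via the count of $t+1$ orbits from Proposition~\ref{prop representatives}. The only divergence is in how the abelian case is closed. The paper argues indirectly: every isomorphism class is a union of strong isomorphism classes, there are (``well-known'') exactly $t+1$ isomorphism types of abelian extensions of $G$ by $\F_p$ and exactly $t+1$ strong isomorphism classes, so the two partitions must coincide. You instead compute the isomorphism type attached to each orbit representative $[\beta(\gamma_j^*)]$ explicitly (replacing a cyclic summand of order $p^{n_j}$ by one of order $p^{n_j+1}$) and separate the resulting $t+1$ abelian groups by their invariant factors. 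Your version is slightly longer but more self-contained, since it does not appeal to the unproved ``well-known'' count; in effect you prove that count along the way. Both arguments are valid and rest on the same key input, Proposition~\ref{prop representatives}.
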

	
	\begin{proof}
		If $c\sim_A d$, then, thanks to Theorem \ref{th orbits and strong exts}, the extensions $E_c$ and $E_d$ are strongly isomorphic, so in particular isomorphic. Assume now that $E_c$ and $E_d$ are isomorphic. If $E_c$ is nonabelian, then $[E_c,E_c]$ has order $p$ and is mapped, by any isomorphism $E_c\rightarrow E_d$, to $[E_d,E_d]$. So, $E_c$ and $E_d$ are strongly isomorphic and we are done by Theorem \ref{th orbits and strong exts}. 
		We conclude by observing that each isomorphism class of extensions of $G$ by $\F_p$ is a union of strong isomorphism classes. It is well-known that there are $t+1$ possible isomorphism types of abelian extensions of $G$ by $\F_p$ and now, thanks to Proposition \ref{prop representatives}, we know that there are exactly $t+1$ strong isomorphism classes of such extensions. As the numbers are the same, we are done.
	\end{proof}

	\section{Nonabelian extensions}\label{sec:stabilizers}
	
	Let $[c]\in\hab(G;\F_p)$ and denote by $A_c$ the stabilizer of $[c]$ in $A$. 
	The aim of this section is to determine the orbits of the action of $A_c$ on the image of the cup product 
	$\cup:\Hom(G,\F_p)\times\Hom(G,\F_p)\rightarrow\hc^2(G;\F_p).$
	We will prove the following result.
	
	\begin{proposition}\label{prop:imcup}
		Let $[c]\in\hab(G;\F_p)$ and $[\omega]$, $[\vartheta]$ be elements of $\im\cup$. The following are equivalent:
		\begin{enumerate}[label=$(\arabic*)$]
			\item $[\omega]\sim_{A_c}[\vartheta]$,
			\item $(\lL([\omega]),\lL_c([\omega]),i_c([\omega]))=(\lL([\vartheta]),\lL_c([\vartheta]),i_c([\vartheta]))$.
		\end{enumerate}
	\end{proposition}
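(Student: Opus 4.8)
The plan is to reduce the statement to an orbit problem for subgroups of $G$ under $\Stab_{\Aut(G)}(T_c)$ and then to argue by an explicit normal form, separating cases according to the $c$-index.

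\textbf{Reduction to subgroups.} I would first transport the question from $\im\cup$ to the subgroups $M_\omega,M_\vartheta$. By Lemma~\ref{lem:lambda=1}(2) the scalar $(\lambda\,\id_G,\lambda)$ lies in $A_c$ and sends $[\omega]$ to $\lambda^{-1}[\omega]$; as $\lambda$ runs over $\Z_p^*$ its reduction covers $\F_p^*$, so every nonzero $A_c$-orbit in $\im\cup$ is a union of full scalar classes, and the nonzero $A_c$-orbits in $\im\cup$ are in bijection with the $A_c$-orbits in $\mathbb{P}\im\cup$. The class $0$ is isolated by the invariants, since $\ell([\omega])=n+1$ holds exactly when $M_\omega=G$ and hence only for $[\omega]=0$. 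Since $\mathfrak{m}_G$ is $A$-equivariant (Corollary~\ref{cor:mmap}), the orbits in $\mathbb{P}\im\cup$ match the $A_c$-orbits in $\cor{S}_G^{(2)}$, so for nonzero classes $[\omega]\sim_{A_c}[\vartheta]$ is equivalent to $M_\omega\sim_{A_c}M_\vartheta$. Finally, the action on subgroups in \eqref{eq:ActionAonSbgs} forgets the $\Z_p^*$-component, and the description of $A_c$ as the stabilizer of the functional $\bar c\in\widehat V$ (so $(\sigma,\lambda)\in A_c$ iff $\lambda\,\bar c\,\overline{\sigma}^{-1}=\bar c$), combined with $(\lambda\,\id_G,\lambda)\in A_c$, shows that the image of $A_c$ in $\Aut(G)$ is exactly $\Stab_{\Aut(G)}(T_c)$ (compatibly with Corollary~\ref{cor:stabT}). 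Thus the proposition becomes: two index-$p^2$ subgroups $M,M'$ of $G$ containing $pG$ are $\Stab_{\Aut(G)}(T_c)$-conjugate if and only if $(\lL(M),\lL_{T_c}(M),i_c(M))=(\lL(M'),\lL_{T_c}(M'),i_c(M'))$.

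\textbf{Direction $(1)\Rightarrow(2)$.} This is immediate from Proposition~\ref{prop:LevelsTindices}(3) taken with $[d]=[c]$: an element $\sigma\in\Stab_{\Aut(G)}(T_c)$ preserves the characteristic subgroups $G[p^i]$ and the torsion $T_c[p^i]$, and carries $M$ to $\sigma(M)$ and $M\cap T_c$ to $\sigma(M\cap T_c)$, hence preserves all three invariants.

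\textbf{Direction $(2)\Rightarrow(1)$.} This is the substance. Using Proposition~\ref{prop:LevelsTindices}(1), the reduction of Section~\ref{sec:ReductionProblem}, and Proposition~\ref{prop:abelian}, I may replace $[c]$ by a convenient representative, so that either $[c]=0$ (whence $T_c=G$, $\lL_{T_c}=\lL$, and $i_c\equiv 0$) or, by Example~\ref{ex:CB-ok}, $T_c=\ker(\gamma_j^*)+pG$ is a coordinate maximal subgroup. I then split on the common value $i_c\in\graffe{0,1}$. In the case $i_c=0$ one has $M,M'\subseteq T_c$, so $M,M'$ are index-$p$ subgroups of the abelian $p$-group $T_c$; here $\lL_{T_c}$ records the type of $M$ inside $T_c$ while $\lL$ records how $M$ meets the exponent filtration of $G$, and one builds a generating set of $M$ adapted simultaneously to $G=\bigoplus_j I_j$ and to $T_c$, then produces $\sigma\in\Stab_{\Aut(G)}(T_c)$ carrying the normal form of $M$ to that of $M'$. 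In the case $i_c=1$ one has $M+T_c=G$ and $M\cap T_c$ of index $p$ in $M$, and the pairs $\lL$ and $\lL_{T_c}$ jointly pin down the position of $M$ against the exponent filtrations of $G$ and of $T_c$; one again constructs an adapted normal form and a stabilizing automorphism realizing $M\mapsto M'$.

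\textbf{Main obstacle.} The conceptual picture is that, $G[p^i]$ and $T_c$ being flag-compatible, passing to $V=G/pG$ identifies $\Stab_{\Aut(G)}(T_c)$ with the subgroup of $\GL(V)$ preserving both the flag of images $\pi(G[p^i])$ and the hyperplane $\pi(T_c)$, and the levels become the jump indices of $\pi(M)$ against these flags. The real difficulty lies in the $i_c=1$ case: one must match the thresholds coming from the filtration of $G$ and those coming from the filtration of $T_c$ \emph{at the same time}, i.e. exhibit a single flag- and $T_c$-preserving automorphism sending $M$ to $M'$ once all four level numbers agree. Checking that these four numbers together with $i_c$ form a \emph{complete} invariant — that corank-$2$ subgroups require no finer datum — is precisely what makes the normal-form construction delicate.
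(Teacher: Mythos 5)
Your reduction and the easy direction are sound and follow the same route as the paper: $(1)\Rightarrow(2)$ is indeed immediate from Proposition~\ref{prop:LevelsTindices}(3), the passage to $\mathbb{P}\im\cup$ and to subgroups via $\mathfrak{m}_G$ is exactly how the paper proceeds (Corollary~\ref{cor:mmap}, Corollary~\ref{cor:stabT}, Lemma~\ref{lem:lambda=1}(2)), and your identification of the image of $A_c$ in $\Aut(G)$ with $\Stab_{\Aut(G)}(T_c)$ is correct. The case split on $i_c$ is also the paper's organisation (Sections~\ref{subsec:CupUnderA}, \ref{subsec:ic=0}, \ref{subsec:ic=1}).

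However, the proof has a genuine gap: in the direction $(2)\Rightarrow(1)$ you describe the normal-form construction but never carry it out, and you yourself flag the $i_c=1$ case as the ``main obstacle'' without resolving it. That case is where essentially all the work lies, and it does not reduce to a single uniform argument. Concretely, one first needs a decomposition $G=\gen{b}\oplus\gen{x}\oplus\gen{y}\oplus\tilde{M}$ with $x,y\in\ker\tilde{c}$ of orders $p^{\ell_T(M)}$ and $p^{\L_T(M)}$ and $M=\ker f\cap\ker g$ adapted to it (the paper's Lemma~\ref{lemma:xTyT}); then one needs to locate an element $b_M=b-\alpha x-\delta y\in M\setminus(M\cap T)$ and determine which of $\alpha,\delta$ are units, which depends on which of the inequalities $\ell(M)\leq\ell_T(M)\leq\L(M)\leq\L_T(M)$ are equalities (Lemma~\ref{lemma:bM}); finally the automorphism $\sigma$ must be built differently in each of the three sub-cases $l=l_c\leq L=L_c$, $l<l_c<L=L_c$, and ($l<l_c=L=L_c$ or $L<L_c$), and in each one the scalar $\lambda$ accompanying $\sigma$ must be chosen (e.g.\ $\lambda=\delta_\vartheta\delta_\omega^{-1}$ or $\lambda=\alpha_\vartheta\alpha_\omega^{-1}$) precisely so that $(\sigma,\lambda)$ fixes $[c]$ and not merely $T_c$. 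Your sketch asserts that a ``single flag- and $T_c$-preserving automorphism'' exists once the four level numbers agree, but establishing that these numbers form a complete invariant is exactly the content of the three constructive lemmas; without them the claim is unproven. The $i_c=0$ case is also only asserted, though there the argument is shorter (and, as the paper notes in Proposition~\ref{prop:i=0,levels}, $\lL_c$ is then determined by $\lL$, so only the $G$-levels matter).
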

	
	Until the end of Section \ref{sec:stabilizers}, the following assumptions will be satisfied. Let $[c]\in\hab(G;\F_p)$ and $[\omega],[\vartheta]\in\im\cup$ be fixed. As a consequence of the discussion from Section \ref{sec:ReductionProblem}, without loss of generality, we will work under the following additional assumptions. 
	Let $\cor{B}=\graffe{b_0=b,b_1,\ldots,b_r}$ be a minimal set of generators  of cardinality $r+1$ such that
	\[
	G= \langle b\rangle \oplus \bigoplus_{i=1}^r \langle b_i\rangle.
	\]
	Let $\tilde{c}\in\widehat{G}$ be such that $[c]=\beta(\tilde{c})$ and, if $[c]\neq 0$, assume that 
	$\im \tilde{c}\cong \langle b \rangle$ and that 
	$$\ker\tilde{c}=  \bigoplus_{i=1}^r \langle b_i\rangle \textup{ and } G=\gen{b}\oplus\ker\tilde{c}.$$
	Let $T$ be the kernel of $[c]$ and, if $[c]\neq 0$, observe that $T=\ker \tilde{c}+pG$ is maximal in $G$, analogously to Example \ref{ex:CB-ok}. Write, moreover, $M_{\omega}$ and $M_{\vartheta}$ respectively for the kernels of $[\omega]$ and $[\vartheta]$, respectively.
	The case $[c]=0$ is covered in Section \ref{subsec:CupUnderA}. If $[c]\in \hab(G;\F_p)\setminus\graffe{0}$, then we study the action of $A_c$ on cup products in two parts. 
	The case where $M_{\omega}+M_{\vartheta}\subseteq T$ 	is discussed in Section \ref{subsec:ic=0} and the case where $G=M_{\omega}+T=M_{\vartheta}+ T$ 
	is considered in Section \ref{subsec:ic=1}. 
	We remark that, the condition $i_c([w])=i_c([\vartheta])$ imposed in $(2)$ prevents the existence of any other case.
	We last let $M$ be a subgroup of index $p^2$ of $G$ containing $pG$ and 
	observe that $M$ is the kernel of some element of $\im\cup\setminus\graffe{0}$; see Section \ref{subsec:plucker}.

	\begin{lemma}\label{lemma:lLstart}
		Write $\ell\L(M)=(l,L)$. Let, moreover, $\tilde{M}$ be a subgroup of $M$ and $\cor{C}\subseteq \cor{B}$ such that $G=\gen{\cor{C}}\oplus\tilde{M}$. 
		Then there exist $x,y\in\cor{C}$ such that $|x|=p^l$, $|y|=p^L$, and $G=\gen{x,y}+M$. 
	\end{lemma}
	
	\begin{proof}
		We start by showing that there exists $x\in\cor{C}$ such that $|x|=p^l$ and $x\notin M$.
		For a contradiction, assume this is not true and write $C=\gen{\cor{C}}$.  
		Then
		$G[p^l]=C[p^l]+\tilde{M}[p^l]\subseteq C[p^{l-1}]+M=M$,
		which is a contradiction to the maximality of $l$. 
		Fix now such an element $x$ and define $\tilde{H}=\gen{x}\oplus \tilde{M}$, which satisfies $G=\gen{\cor{C}\setminus\{x\}}\oplus\tilde{H}$. 
		Note that $\tilde{H}$ is a subgroup of the maximal subgroup $H=\gen{x}+M$ of $G$.
		We now claim that there exists $y\in\cor{C}\setminus\{x\}$ of order $p^L$.  If this is not the case and $D=\gen{\cor{C}\setminus\{x\}}$,
		then 
		\[
		G[p^L]=D[p^L]+\tilde{H}[p^L]\subseteq D[p^{L-1}]+H\subseteq G[p^{L-1}]+H
		\]
		from which it follows that
		\[
		G=G[p^L]+M=G[p^{L-1}]+H=G[p^{L-1}]+\gen{x}+M.
		\]
		The minimality of $L$ yields that $l=L$ and so that $G=\gen{x}+M$. In particular, $|G:M|=|\gen{x}:\gen{px}|=p$. Contradiction.
	\end{proof}
	
	\begin{theorem}\label{th:gustavo}
		Write $\lL(M)=(l,L)$ and let $x,y\in\cor{B}$ be such that $G=\gen{x,y}+M$ and $(|x|,|y|)=(p^l,p^L)$. Let, moreover, $H$ be a subgroup of $G$ such that $x,y\in H$. Then there exists a subgroup $\tilde{M}\subseteq H\cap M$ such that $H=\gen{x}\oplus\gen{y}\oplus \tilde{M}$.
	\end{theorem}
	
	\begin{proof}
		Let $J$ be a subgroup of $G$ such that $G=\gen{x}\oplus\gen{y}\oplus J$ and note that $J$ exists because $x,y\in\cor{B}$. Moreover, thanks to Dedekind's Law, we also have that $H=\gen{x}\oplus\gen{y}\oplus(H\cap J)$. Write now $I=\gen{x}\oplus\gen{y}$. We will show that $H\cap J$ can be replaced by a complement of $I$ in $H$ that is contained in $M$. For this, we consider all decompositions of $H$ of the form
		\[
		H=I\oplus\gen{z_1}\oplus\ldots\oplus\gen{z_s}
		\]
		and we choose one such that $m=|\{i \mid z_i \notin M\}|$ is minimal. We will prove that $m=0$, in other words that $C=\gen{z_1}\oplus\ldots\oplus\gen{z_s}$ is the desired complement. We argue by contradiction, assuming that $z_1\notin M$. It follows that $|z_1|\geq p^l$ and, from $G=\gen{x,y}+M$ and $pG\subseteq M$, that $z_1$ can be expressed as 
		\begin{equation}\label{eq:pI}
			z_1=\eta x+\kappa y + z_1' \ \ \textup{ with } \ \ \eta, \kappa\in\{0,\ldots, p-1\},\ z_1'\in H\cap M.
		\end{equation}
		We claim that $C'=\gen{z_1'}\oplus\gen{z_2}\oplus\ldots\oplus\gen{z_s}$ is a complement of $I$ in $H$. We will show this by means of proving that $I\oplus\gen{z_1}=I\oplus\gen{z_1'}$. Since the equality $I+\gen{z_1}=I+\gen{z_1'}$ is clear, it suffices to verify that $I\cap \gen{z_1'}=0$ holds. For this, let $\lambda,\mu,\nu\in\Z_p$ be such that 
		$\lambda x+ \mu y+\nu z_1'=0$. It follows that 
		\[
		(\lambda-\nu\eta)x+(\mu-\nu\kappa)y+\nu z_1=0, 
		\]
		from which we derive that $(\lambda-\nu\eta)x=(\mu-\nu\kappa)y=\nu z_1=0$. Then $\nu\geq |z_1|\geq p^l$ and, the order of $x$ being $p^l$ yields that 
		$0=(\lambda-\nu\eta)x=\lambda x$. If, additionally $|z_1|\geq p^L$ or $\kappa=0$, in a similar fashion we obtain that $\mu y=0$. We assume now that $|z_1|<p^L$ and that $\kappa \neq 0$. Then $|x|$ is also smaller than $p^L$. Moreover, $\kappa$ is invertible modulo $p$ and so \eqref{eq:pI} yields that $y$ belongs to $\gen{x,z_1}+M$. We deduce that 
		\[
		G=\gen{x,y}+M=\gen{x,z_1}+M=G[p^{L-1}]+M,
		\]
		which contradicts the definition of $L=\L(M)$. This concludes the proof that $I\oplus\gen{z_1}=I\oplus\gen{z_1'}$.
		
		We have shown that $C'$ is a complement of $I$ in $H$ with a smaller number of generators outside of $M$; contradiction to the minimality of $m$.
	\end{proof}

	\subsection{Full stabilizer}\label{subsec:CupUnderA}
	
	Until the end of Section \ref{subsec:CupUnderA}, we work under the assumption that $[c]=[0]\in\hab(G;\F_p)$; then $A=A_c$ and we are simply studying the action of $A$ on the cup product. In this section we prove thus Proposition \ref{prop:imcup} under these assumptions and in the following form.
	
	\begin{proposition}\label{prop:lL}
		One has $[\omega]\sim_A [\vartheta]$ if and only $\lL([\omega])=\lL([\vartheta]).$
	\end{proposition}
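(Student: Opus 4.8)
The plan is to produce, for each admissible pair $(l,L)$, a single orbit representative and to show that every $[\omega]\in\im\cup$ with $\lL([\omega])=(l,L)$ is $A$-equivalent to it. The forward implication is free: taking $[c]=[d]=0$ in Proposition~\ref{prop:LevelsTindices}, part $(3)$ of that proposition reads $\lL([\omega])=\lL(a\cdot[\omega])$ for every $a\in A$ (here $[\omega_a]=a\cdot[\omega]$), so $[\omega]\sim_A[\vartheta]$ forces $\lL([\omega])=\lL([\vartheta])$.

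For the converse I would first translate the levels into linear-algebra data on $V$. Set $V_{\le i}=\pi(G[p^i])=\langle v_{jk}\mid n_j\le i\rangle$. Since $\ker\pi=pG\subseteq M_\omega$ and $\pi$ is onto, one has $G[p^i]\subseteq M_\omega\iff V_{\le i}\subseteq\pi(M_\omega)$ and $G[p^i]+M_\omega=G\iff V_{\le i}+\pi(M_\omega)=V$. Writing $[\omega]=[f\cup g]$ with $f,g\in\widehat{V}$, so that $\pi(M_\omega)=\ker f\cap\ker g$, this says that $\ell([\omega])$ is the least $i$ for which the pair $(f,g)$ is nonzero on $V_{\le i}$ and $\L([\omega])$ the least $i$ for which $(f,g)$ maps $V_{\le i}$ onto $V/\pi(M_\omega)\cong\F_p^2$; in particular $\ell([\omega]),\L([\omega])\in\{n_1,\dots,n_t\}$ and $\ell([\omega])\le\L([\omega])$ by Lemma~\ref{lemma:lLbasic}(4).

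Now fix $(l,L)=\lL([\omega])$, pick $j_0,j_1$ with $n_{j_0}=l$, $n_{j_1}=L$ (with $j_0=j_1$ but two distinct generators when $l=L$, legitimate because a double jump of the defect forces $r_{j_0}\ge 2$), and set $[\omega_{l,L}]=[v_{j_0 1}^*\cup v_{j_1 1}^*]$, which one checks has levels $(l,L)$. The target is $[\omega]\sim_A[\omega_{l,L}]$; transitivity then proves the ``if'' direction. By Lemma~\ref{lemma:lLstart} there exist $x,y\in\cor{B}$ of orders $p^l,p^L$ with $G=\langle x,y\rangle+M_\omega$; composing with an automorphism permuting equal-order members of $\cor{B}$ I may assume $x=\gamma_{j_0 1}$, $y=\gamma_{j_1 1}$, and choosing $f,g$ dual to $\bar x,\bar y$ modulo $\pi(M_\omega)$ I may normalise the pair so that $f=v_{j_0 1}^*+\phi$ and $g=v_{j_1 1}^*+\psi$. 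The description of the levels above then forces $\phi,\psi$ to be supported on directions $v_{hk}^*$ with $n_h\ge l$, and $(f,g)$ to have rank $1$ on $V_{\le j}$ for $l\le j<L$.

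The decisive and hardest step is to move $\langle f,g\rangle$ onto $\langle v_{j_0 1}^*,v_{j_1 1}^*\rangle$ using only automorphisms of $G$. The image of $\Aut(G)$ in $\GL(V)$ is the parabolic stabilising the flag $V_{\le n_1}\subset\cdots\subset V_{\le n_t}=V$; dually this means one may add to $v_{jk}^*$ only multiples of $v_{hl}^*$ with $n_h\ge n_j$. I would straighten in two order-respecting moves: an elimination absorbs the tail $\phi$ (supported on orders $\ge l=n_{j_0}$) into $v_{j_0 1}^*$, turning $f$ into $v_{j_0 1}^*$; then, replacing $g$ by $g$ minus its $v_{j_0 1}^*$-component (harmless, since $M_\omega$ depends only on $\langle f,g\rangle$), the rank-$1$ condition on each $V_{\le j}$, $l\le j<L$, forces the new $g$ to vanish there, hence to be supported on orders $\ge L=n_{j_1}$, so a second elimination carries it to $v_{j_1 1}^*$; call $\sigma\in\Aut(G)$ the resulting automorphism. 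The conceptual reason that only $(l,L)$ survives is that $V/\pi(M_\omega)$ has rank $2$, so the defect sequence $i\mapsto\dim V_{\le i}-\dim(V_{\le i}\cap\pi(M_\omega))\in\{0,1,2\}$ is pinned down by its two jumps $l$ and $L$; the cases $l<L$ and $l=L$ require slightly different bookkeeping, and this order-compatible elimination is where the real work lies. Once $f\sigma^{-1},g\sigma^{-1}$ and $v_{j_0 1}^*,v_{j_1 1}^*$ span the same plane $U\subseteq\widehat{V}$, their wedges lie in the line $\Lambda^2 U$ and are proportional, so $\psi_G$ gives $(\sigma,1)\cdot[\omega]=\mu[\omega_{l,L}]$ for some $\mu\in\F_p^*$; applying $(\id,\mu^{-1})$ and invoking Lemma~\ref{lem:lambda=1} yields $[\omega]\sim_A[\omega_{l,L}]$.
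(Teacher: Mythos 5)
Your argument is correct (modulo the degenerate class $[\omega]=0$, which you should dispose of separately as the paper does: it is the unique class with $\L([\omega])=0$, so it can only be $A$-equivalent to itself), but it follows a genuinely different route from the paper's. The paper also starts from Lemma \ref{lemma:lLstart}, but then (Lemma \ref{lemma:xyA-orbits}) upgrades the pair $x,y$ to a direct-sum decomposition $G=\gen{x}\oplus\gen{y}\oplus\tilde{M}$ with $\tilde{M}\subseteq M_\omega$, $|x|=p^l$, $|y|=p^L$; doing this for both $M_\omega$ and $M_\vartheta$, the complements $\tilde{M}_\omega$ and $\tilde{M}_\vartheta$ are isomorphic by Krull--Schmidt, and an automorphism matching the two decompositions carries $[\omega]$ to $[\vartheta]$ directly, with no normal form and no scalar correction at the end. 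You instead work dually in $\widehat{V}$: you identify the image of $\Aut(G)$ in $\GL(V)$ as the parabolic stabilising the torsion flag $V_{\le n_1}\subset\cdots\subset V_{\le n_t}$ (a true and standard fact, but one the paper never needs to isolate) and perform flag-compatible Gaussian elimination on the plane $\gen{f,g}$ to reach the normal form $\gen{v_{j_0 1}^*,v_{j_1 1}^*}$, finishing with a scalar via Lemma \ref{lem:lambda=1}; I checked that the two eliminations are legitimate (the tail of $f$ is supported on orders $\ge l$, the corrected $g$ on orders $\ge L$ by your rank-$1$ observation, and in the case $l=L$ the two generators can be taken in the same block since $r_{j_0}\ge 2$). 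Your route is closer in spirit to the projective/Grassmannian viewpoint of Section \ref{subsec:plucker} and is the kind of argument one would try to push to higher-rank elements of $\mathbb{P}\gen{\im\cup}$ as envisaged in Section \ref{subsec:moregens}. What the paper's construction buys is reusability: the adapted decomposition of Lemma \ref{lemma:xyA-orbits} is the workhorse for the relative cases in Sections \ref{subsec:ic=0} and \ref{subsec:ic=1}, where the automorphism must additionally respect $T$ and $\tilde{c}$, whereas your elimination would have to be redone under that extra constraint.
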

	
	\noindent
	To that aim, we prove the following lemma, which will be used in the next section, too.

	\begin{lemma}\label{lemma:xyA-orbits}
		Write $\lL(M)=(l,L)$. Then there exist $f,g\in\Hom(G,\F_p)$, 
		$x,y\in\cor{B}$ of orders respectively $p^l$ and $p^L$,  and $\tilde{M}\subseteq M$
		such that the following hold:
		\begin{enumerate}[label=$(\arabic*)$]
			\item $M=\ker f\cap \ker g$,
			\item $f(x)=1$, $g(x)=0$, $f(y)=0$, and $g(y)=1$,
			\item $G=\gen{x}\oplus\gen{y}\oplus \tilde{M}.$
		\end{enumerate}
	\end{lemma}
	
	\begin{proof}
		Let $x,y$ be as in Lemma \ref{lemma:lLstart}, where $\cor{C}$ is taken to be $\cor{B}$. Now (1)-(2) are direct consequences of Lemma~\ref{lemma:T=kerf'} while (3) follows from Theorem~\ref{th:gustavo} to $H=G$. 
	\end{proof}

	\begin{proof}[Proof of Proposition \ref{prop:lL}]
		Assume first that $[\omega]\sim_A[\vartheta]$. If $[\omega]=[\vartheta]=0$, then
		we are clearly done. If $[\omega],[\vartheta]$ are non-trivial elements of $\im\cup$, then Proposition \ref{prop:LevelsTindices}(3) yields that $\lL(M_{\omega})=\lL(M_{\vartheta})$. 
		
		For the other implication, we start by observing that $\lL([\omega])=(n+1,0)$ if and only if $M_{\omega}=G$. In particular, the trivial class is determined by its $G$-levels.
		We assume now that $[\omega],[\vartheta]$ are non-trivial and write  $\lL([\omega])=\lL([\vartheta])=(l,L)$.
		We will construct $(\sigma, \lambda)\in A$ such that $[\vartheta]=(\sigma,\lambda)[\omega]$. To this end, 
		we let $x_{\omega},y_{\omega}\in G$, $f_{\omega},g_{\omega}\in\Hom(G;\F_p)$, and  $\tilde{M}_{\omega}\leq M_{\omega}$ be equivalents of $x,y,f,g,\tilde{M}$ in Lemma \ref{lemma:xyA-orbits} for $M_{\omega}$.
		Analogously, we let
		$x_{\vartheta},y_{\vartheta},f_{\vartheta},g_{\vartheta}$, and $\tilde{M}_{\vartheta}$ be associated with $M_{\vartheta}$. Observe that 
		$[\omega]=[f_{\omega}\cup g_{\omega}]$ and $[\vartheta]=[f_{\vartheta}\cup g_{\vartheta}]$.
		We now choose an isomorphism $\tilde{M}_{\omega}\rightarrow \tilde{M}_{\vartheta}$ and extend it to an automorphism  
		$\sigma\in \Aut(G)$ satisfying 
		$\sigma(x_{\omega})= x_{\vartheta}$ and  $\sigma(y_{\omega})= y_{\vartheta}$.
		It is now a straightforward calculation to show that $(\sigma,1)[\omega]=[\vartheta]$.
	\end{proof}

	\subsection{Inclusion of the kernels}\label{subsec:ic=0}
	
	\noindent
	Until the end of Section \ref{subsec:ic=0}, we work under the assumption that $[c]\neq 0$; then $T=\ker \tilde c+pG$ is maximal in $G$. 
	We additionally assume that $M+M_{\omega}+M_{\vartheta}\subseteq T$ and observe that 
	$[\omega],[\vartheta]\neq 0$ and $i_c([\omega])=i_c([\vartheta])=0$. In this section we prove Proposition \ref{prop:ic=0}, which coincides with Proposition \ref{prop:imcup} under the last assumptions.
	
	\begin{proposition}\label{prop:ic=0}
		One has $[\omega]\sim_{A_c}[\vartheta]$ if and only if $\lL([\omega])=\lL([\vartheta])$.
	\end{proposition}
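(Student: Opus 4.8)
The plan is to prove the two implications separately, the forward one being immediate and the backward one hinging on a level computation internal to $T$.

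First, if $[\omega]\sim_{A_c}[\vartheta]$ then \emph{a fortiori} $[\omega]\sim_A[\vartheta]$, since $A_c\le A$; Proposition \ref{prop:lL} then gives $\lL([\omega])=\lL([\vartheta])$. For the converse I keep the running decomposition $G=\gen{b}\oplus R$ with $R=\ker\tilde c=\bigoplus_i\gen{b_i}$ and $T=\gen{pb}\oplus R$, and I write $p^m=|b|$; note that $m=\ell([c])=\L([c])$ by Example \ref{ex:index0}(2). Since $T$ is proper and $M_{\omega},M_{\vartheta}\subseteq T$, both classes are nonzero, so $M_{\omega},M_{\vartheta}$ have index $p^2$ in $G$, hence index $p$ in $T$.

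The key step is to show that $\lL([\omega])=\lL([\vartheta])$ forces the \emph{$T$-levels} to agree as well, i.e.\ $\lL_T(M_{\omega})=\lL_T(M_{\vartheta})$; this is what will let me stay inside $A_c$. Let $M$ be either $M_{\omega}$ or $M_{\vartheta}$. As $\gen{pb}\subseteq pG\subseteq M\subseteq T=\gen{pb}\oplus R$, the subgroup splits as $M=\gen{pb}\oplus N$ with $N=M\cap R$ maximal in $R$. Computing the maxima and minima defining the levels, $\lL_T(M)$ coincides with the pair of $R$-levels of $N$, which are equal to a single value $s$ because $N$ is maximal in $R$ (Example \ref{ex:index0}(2)); comparing moreover the $\gen{b}$-components of $G[p^i]$ and $T[p^i]$ gives $\ell(M)=\min(m,s)$ and $\L(M)=\max(m,s)$, whence
\[
\lL_T(M)=(s,s),\qquad s=\ell(M)+\L(M)-m.
\]
Thus $\lL_T(M)$ is determined by $\lL(M)$ and the fixed integer $m$ (for instance this recovers $\lL_T(M)=(3,3)$ from $\lL(M)=(2,3)$ and $m=2$ in Example \ref{ex:index0}(3)), and $\lL([\omega])=\lL([\vartheta])$ indeed implies $\lL_T(M_{\omega})=\lL_T(M_{\vartheta})$. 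I expect this computation to be the main obstacle, as it is where the hypothesis $i_c=0$, i.e.\ the inclusion in $T$, is genuinely used.

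With equal $T$-levels in hand, I conclude by transport of structure. Writing $M_{\omega}=\gen{pb}\oplus N_{\omega}$ and $M_{\vartheta}=\gen{pb}\oplus N_{\vartheta}$, the maximal subgroups $N_{\omega},N_{\vartheta}$ of $R$ have equal $R$-levels, so Proposition \ref{prop:abelian} applied to $R$ (which inherits the standing assumptions) together with the equivariant correspondence \eqref{eq:TauMap} yields $\sigma_R\in\Aut(R)$ with $\sigma_R(N_{\omega})=N_{\vartheta}$. Extending by $\sigma|_{\gen{b}}=\id$ produces $\sigma\in\Aut(G)$ fixing $T$ and $\tilde c$, so $(\sigma,1)\in A_c$, and $\sigma(M_{\omega})=M_{\vartheta}$. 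By Lemma \ref{lemma:kernels} one has $M_{(\sigma,1)[\omega]}=\sigma(M_{\omega})=M_{\vartheta}$, so $(\sigma,1)[\omega]$ and $[\vartheta]$ share the same kernel; since $\mathfrak{m}_G$ is injective on $\mathbb{P}\im\cup$ (Corollary \ref{cor:mmap}), we get $(\sigma,1)[\omega]=\nu[\vartheta]$ for some $\nu\in\F_p^*$. Finally $(\nu,\nu)\in A_c$ by Lemma \ref{lem:lambda=1}(2) and scales $\im\cup$ by $\nu^{-1}$, so $(\nu,\nu)(\sigma,1)\in A_c$ sends $[\omega]$ to $[\vartheta]$, proving $[\omega]\sim_{A_c}[\vartheta]$.
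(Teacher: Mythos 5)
Your proof is correct. Both implications are handled soundly: the forward direction via $A_c\le A$ and Proposition \ref{prop:lL} matches the paper (which cites Proposition \ref{prop:LevelsTindices} instead), and your splitting $M=\gen{pb}\oplus N$ with $N=M\cap R$ maximal in $R=\ker\tilde c$ is valid because $\gen{pb}\subseteq pG\subseteq M\subseteq T=\gen{pb}\oplus R$; from it your identities $\lL_T(M)=\lL_R(N)=(s,s)$, $\ell(M)=\min(m,s)$, $\L(M)=\max(m,s)$ all check out, and the final scalar fix via $\mathfrak{m}_G$ and Lemma \ref{lem:lambda=1}(2) is exactly what is needed.

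Where you diverge from the paper is in the middle of the argument. The paper reaches the same automorphism by explicit basis-matching: it invokes Lemma \ref{lem:i=0,overview M} (built on Lemma \ref{lemma:xyA-orbits}) to produce decompositions $G=\gen{b}\oplus\gen{y_\bullet}\oplus\tilde M_\bullet$ with controlled orders, records the dichotomy $G[p^l]\subseteq T$ versus $G[p^l]\not\subseteq T$ in Proposition \ref{prop:i=0,levels} to explain why the $T$-levels are redundant, and then needs Corollary \ref{cor:stabT} to find a $\lambda$ with $(\sigma,\lambda)\in A_c$, since its $\sigma$ only stabilizes $T$ rather than $\tilde c$. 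You instead reduce to the corank-one group $R$: the closed formula $s=\ell(M)+\L(M)-m$ replaces Proposition \ref{prop:i=0,levels} by an explicit computation, the matching of $N_\omega$ with $N_\vartheta$ is delegated to the already-established classification of maximal subgroups (Proposition \ref{prop:abelian} for $R$ via \eqref{eq:TauMap}), and your $\sigma$ fixes $\tilde c$ on the nose, so no $\lambda$-adjustment is needed before the final projective rescaling. Your route is a bit slicker and more self-contained modulo Section \ref{section AbelianExtensions}; the paper's route produces the explicit generator data $(y,\tilde M)$ that it reuses in the harder case $i_c=1$ of Section \ref{subsec:ic=1}, which your reduction would not directly supply. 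The only point worth making explicit is that $R$, being a direct summand of $G$ spanned by a subset of the fixed generators, inherits the standing assumptions (in particular the restriction for $p=2$), so Proposition \ref{prop:abelian} indeed applies to it.
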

	
	\noindent
	The next result explains why the values $\lL_c([\omega])$ and $\lL_c([\vartheta])$ do not appear in Proposition \ref{prop:ic=0}.
	
	\begin{proposition}\label{prop:i=0,levels}
		Write $\lL(M)=(l,L)$ and $\lL_T(M)=(l_c,L_c)$.
		Then the following hold:
		\begin{enumerate}[label=$(\arabic*)$]
			\item $G[p^l]\subseteq T$ is equivalent to $l=l_c=L_c<L$,
			\item $G[p^l]\not\subseteq T$ is equivalent to $l\leq l_c=L_c=L$.
		\end{enumerate}
	\end{proposition}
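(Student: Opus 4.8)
The plan is to work throughout under the standing hypotheses of this subsection, so that $M\subseteq T$ with $|G:M|=p^2$ and $|G:T|=p$; in particular $M$ is a maximal subgroup of $T$ and $M\cap T=M$. I would first reduce the entire statement to an analysis of the single number $l_c$, since I claim that here $l_c=L_c$ always holds. Indeed, because $M$ is maximal in $T$ and $T\not\subseteq M$, the definition of $l_c$ gives $T[p^{l_c-1}]\subseteq M$ and $T[p^{l_c}]\not\subseteq M$; maximality of $M$ in $T$ then forces $T[p^{l_c}]+M=T$, whence $L_c\leq l_c$, while Lemma \ref{lemma:lLbasic}(4) gives $l_c\leq L_c$. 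Writing $m:=l_c=L_c$, Lemma \ref{lemma:GTLevels} (applied with $M\subseteq T$) yields the chain $l\leq m\leq L$, which I would record as the basic frame for everything that follows.

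Next I would prove two implications whose right-hand sides are complementary. \emph{First implication:} if $G[p^l]\subseteq T$, then $l=m<L$. The equality $l=m$ is immediate once one observes that $T[p^i]=T\cap G[p^i]$, so that $T[p^{l-1}]=G[p^{l-1}]\subseteq M$ while $T[p^l]=G[p^l]\not\subseteq M$, the two memberships being exactly the defining properties of $l=\ell(M)$. For the strict inequality $m<L$, i.e.\ $l<L$, I would argue by contradiction: if $l=L$ then $G[p^l]=G[p^L]$ satisfies $G[p^L]+M=G$ by definition of $L$, yet $G[p^l]\subseteq T$ and $M\subseteq T$ force $G[p^l]+M\subseteq T\subsetneq G$, a contradiction.

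\emph{Second implication:} if $G[p^l]\not\subseteq T$, then $m=L$. When $l=L$ this is just the squeeze $l\leq m\leq L$, so the content is the case $l<L$. Here I would pick $z\in G[p^l]\setminus T$; since $|G:T|=p$ one has $G=T+\gen{z}$, and since $l\leq L-1$ the element $z$ lies in $G[p^{L-1}]$. Assuming for contradiction that $m=L_c<L$, the defining property of $L_c$ gives $T[p^{L-1}]+M=T$, and then
$$G=T+\gen{z}=\big(T[p^{L-1}]+M\big)+\gen{z}\subseteq G[p^{L-1}]+M,$$
contradicting the minimality of $L=\L(M)$. Hence $m=L$. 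I expect this step to be the main obstacle: one must locate a torsion element of controlled order lying outside $T$ and feed it back into the minimality that defines $L$, and the degenerate boundary case $l=L$ has to be separated out and dispatched by the frame $l\leq m\leq L$.

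Finally I would assemble the biconditionals. The hypotheses $G[p^l]\subseteq T$ and $G[p^l]\not\subseteq T$ are mutually exclusive and exhaustive, and the conclusions $l=m<L$ and $m=L$ are mutually exclusive (as $m<L$ versus $m=L$) and exhaustive (since $m\leq L$); therefore the two implications proved above automatically upgrade to equivalences. Explicitly, if $l=l_c=L_c<L$ then $G[p^l]\not\subseteq T$ would give $m=L$, a contradiction, so $G[p^l]\subseteq T$; and if $l\leq l_c=L_c=L$ (equivalently $m=L$, since $l\leq m$ always) then $G[p^l]\subseteq T$ would give $m<L$, a contradiction, so $G[p^l]\not\subseteq T$. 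This yields statements (1) and (2) and completes the proof.
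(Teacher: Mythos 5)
Your proof is correct and follows essentially the same route as the paper's: both first deduce $l_c=L_c$ from the fact that $M$ has index $p$ in $T$, then settle each direction using the chain $l\le l_c\le L_c\le L$ from Lemma~\ref{lemma:levelsTlevels} and minimality-of-$\L(M)$ contradictions. The only notable differences are organizational — you obtain the converse implications from the mutual exclusivity of the two conclusions rather than by the paper's separate direct contradiction, and your side remark that the conclusions are \emph{exhaustive} is not justified as literally stated (a priori $l<l_c<L$ is not excluded at that point), but your explicit assembly uses only mutual exclusivity, so nothing is lost.
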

	
	\begin{proof}
		By Lemma \ref{lemma:levelsTlevels},  we have $l\leq l_c\leq L_c\leq L$ and, since $T$ is maximal, Corollary~\ref{cor:levMax} yields 
		$l_c=L_c$.
		
		(1) Assume, for a start, that $G[p^l]\subseteq T$. Since $G[p^l]$ is not contained in $M$, we have that
		\[
		T=G[p^l]+M=T[p^l]+M=T[p^l]+(M\cap T)
		\]
		so the minimality of $L_c$ yields $l=L_c$.
		Moreover, since 
		$G=G[p^L]+M$, we also have that $L>l$. 
		
		Assume now that $l=l_c=L_c<L$ and, for a contradiction, that $G[p^l]$ is not contained in $T$. We then have that 
		$$G=G[p^l]+T=G[p^l]+M+T[p^{L_c}]=G[p^l]+M,$$
		contradicting the minimality of $L$.
		
		(2) Assume first that $G[p^l]$ is not contained in $T$.  Then we have
		$$
		G=T+G[p^l]=M+T[p^{L_c}]+G[p^l]=M+G[p^{L_c}]
		$$
		and so the minimality of $L$ yields $L=L_c$.
		The other implication follows from (1).
	\end{proof}
	
	\noindent
	The rest of the section is devoted to proving Proposition \ref{prop:ic=0}. 
	
	\begin{lemma}\label{lem:i=0,overview M}
		Write $\lL(M)=(l,L)$. Then there exist 
		$y\in\cor{B}$ and $\tilde{M}\subseteq M$
		such that $G=\gen{b}\oplus\gen{y}\oplus \tilde{M}$ and 
		$$(|b|,|y|)=\begin{cases}
			(p^l, p^L) & \textup{ if } G[p^l]\not \subseteq T,\\
			(p^L,p^l) & \textup{ if } G[p^l]\subseteq T.
		\end{cases}
		$$
	\end{lemma}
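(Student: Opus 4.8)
The plan is to first determine the order of the distinguished generator $b$ and then obtain the decomposition by transcribing the argument of Lemma~\ref{lemma:xyA-orbits}, playing $b$ against a suitable second generator. For the order of $b$, I would first record the shape of $T$: since $pG\subseteq M\subseteq T$ and $G=\gen{b}\oplus\ker\tilde{c}$, the equality $T=\ker\tilde{c}+pG$ becomes the internal direct sum $T=\gen{pb}\oplus\ker\tilde{c}$, whence $b\notin T\supseteq M$. As $|G:T|=p$, one has $\L(T)=\min\graffe{j:G[p^j]\not\subseteq T}$, and the displayed form of $T$ shows that the least order of an element of $G$ outside $T$ is exactly $|b|$. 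Hence $|b|=p^{\L(T)}$, and the whole problem reduces to computing $\L(T)$.

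Next I would pin down $\L(T)$ in each case. From $M\subseteq T$ together with $\ell(T)=\L(T)$ (Example~\ref{ex:index0}(2), as $T$ is maximal) one gets $l\le\ell(T)=\L(T)\le L$. If $G[p^l]\not\subseteq T$, then maximality of $T$ forces $G[p^l]+T=G$, so $\L(T)\le l$ and therefore $\L(T)=l$, i.e.\ $|b|=p^l$. If instead $G[p^l]\subseteq T$, then Proposition~\ref{prop:i=0,levels}(1) yields $\ell_T(M)=\L_T(M)=l<L$; unwinding $\L_T(M)=l$ and using $T[p^l]=T\cap G[p^l]=G[p^l]$ produces the key identity $T=G[p^l]+M$. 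Because $l\le L-1$, the subgroup $G[p^{L-1}]+M$ then contains $T$, is proper by minimality of $L$, and hence equals $T$ by maximality; thus $G[p^{L-1}]\subseteq T$, giving $\L(T)\ge L$ and so $\L(T)=L$, i.e.\ $|b|=p^L$.

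Finally I would build the decomposition, mimicking Lemma~\ref{lemma:xyA-orbits} with $b$ as one of the two distinguished generators. If $G[p^l]\not\subseteq T$, then $|b|=p^l$ and $b\notin M$, so $b$ may serve as the order-$p^l$ generator of Lemma~\ref{lemma:lLstart}; choosing $y\in\cor{B}$ of order $p^L$ outside $\gen{b}+M$ and running the adjustment step of Lemma~\ref{lemma:xyA-orbits} yields $\tilde{M}\subseteq M$ with $G=\gen{b}\oplus\gen{y}\oplus\tilde{M}$. If $G[p^l]\subseteq T$, then $|b|=p^L$; set $K=\gen{b}+M$, which is maximal, and note $G[p^l]\not\subseteq K$, since otherwise $K\supseteq G[p^l]+M=T$ would force $K=T$, contradicting $b\in K\setminus T$. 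The argument of the first paragraph of Lemma~\ref{lemma:lLstart} (using $pG,G[p^{l-1}]\subseteq M\subseteq K$) then supplies $y\in\cor{B}$ of order $p^l$ with $y\notin K$; consequently $b\notin\gen{y}+M$, so $b$ plays the role of the order-$p^L$ generator, and the adjustment step of Lemma~\ref{lemma:xyA-orbits} again gives $\tilde{M}\subseteq M$ with $G=\gen{b}\oplus\gen{y}\oplus\tilde{M}$.

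The main obstacle is the case $G[p^l]\subseteq T$: the value $|b|=p^L$ rests on the identity $T=G[p^l]+M$ and on locating an order-$p^l$ generator outside $K=\gen{b}+M$, both of which require the torsion-and-maximality bookkeeping above and the input of Proposition~\ref{prop:i=0,levels}. Once these are in place, the direct-sum construction is a routine transcription of Lemma~\ref{lemma:xyA-orbits}.
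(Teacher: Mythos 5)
Your argument is correct, but it reaches the conclusion by a longer route than the paper's. The paper simply applies Lemma~\ref{lemma:xyA-orbits} to get $x,y\in\cor{B}$ of orders $p^l$ and $p^L$ with $G=\gen{x}\oplus\gen{y}\oplus\tilde{M}$ and $\tilde{M}\subseteq M$, and then observes that, since every element of $\cor{B}\setminus\graffe{b}$ lies in $\ker\tilde{c}\subseteq T$ while $x$ and $y$ generate $G$ modulo $M\subseteq T$, one of $x,y$ must equal $b$; renaming the other one $y$ gives the claim. You instead determine $|b|$ a priori --- identifying $|b|=p^{\L(T)}$ from the decomposition $T=\gen{pb}\oplus\ker\tilde{c}$ and showing $\L(T)=l$ or $\L(T)=L$ according to whether $G[p^l]\not\subseteq T$ or $G[p^l]\subseteq T$ --- and only afterwards re-run the constructions of Lemmas~\ref{lemma:lLstart} and~\ref{lemma:xyA-orbits} with $b$ forced into the pair of distinguished generators; all the individual steps (the identity $T=G[p^l]+M$ via Proposition~\ref{prop:i=0,levels}, the maximality argument giving $G[p^{L-1}]\subseteq T$, and the adapted first paragraph of Lemma~\ref{lemma:lLstart} applied to $K=\gen{b}+M$) check out. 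What your version buys is an explicit verification of the order dichotomy $(|b|,|y|)=(p^l,p^L)$ versus $(p^L,p^l)$, which the paper's terse ``we get the claim'' leaves to the reader; the price is the detour through $\L(T)$ and Proposition~\ref{prop:i=0,levels}, which the paper's one-line observation about $\cor{B}\setminus\graffe{b}\subseteq T$ renders unnecessary.
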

	
	\begin{proof}
		Let $x$, $y$, and $\tilde{M}$ be as in Lemma \ref{lemma:xyA-orbits}: since $T/M$ is cyclic of order $p$, we have that $x=b$ or $y=b$. 
		By renaming $y$ to be the element of $\graffe{x,y}$ that is not equal to $b$, we get  the claim. 
	\end{proof}

	\begin{proof}[Proof of Proposition \ref{prop:ic=0}]
		The implication from left to right follows in a straightforward way from Proposition \ref{prop:LevelsTindices}.
		We now show that the other direction also holds true.
		For this, write $\lL([\omega])=\lL([\vartheta])=(l,L)$.
		Let $(y_{\omega}, \tilde{M}_{\omega})$ and $(y_{\vartheta},\tilde{M}_{\vartheta})$ be the equivalents of the pair $(y,\tilde{M})$ from Lemma \ref{lem:i=0,overview M} respectively for $M_{\omega}$ and $M_{\vartheta}$. It follows that $|y_{\omega}|=|y_{\vartheta}|$ and $\tilde M_{\omega}\cong \tilde M_{\vartheta}$. We now let $\sigma\in\Aut(G)$ be such that 
		\[
		\sigma(b)=b, \quad \sigma(y_{\omega})=y_{\vartheta}, \quad
		\sigma(\tilde{M}_{\omega})=\tilde{M}_{\vartheta}.
		\]
		By construction, $(\sigma,1)$ stabilizes $T$ and satisfies $(\sigma,1)\cdot M_{\vartheta}=M_{\omega}$. Let $\lambda\in\Z_p^*$ be such that $(\sigma,\lambda)\in A_c$, the existence of $\lambda$ being guaranteed by Corollary \ref{cor:stabT}. Set $a=(\sigma,\lambda)$. Then we have that $a\in A_c$ satisfies $a(T,M_{\vartheta})=(T,M_{\omega})$ and thus, as a consequence of Corollary \ref{cor:stabT}, the elements $[\omega]$ and $[\vartheta]$ are conjugate under $A_c$ up to a scalar. Lemma \ref{lem:lambda=1}(2) yields the claim.
	\end{proof}

	\subsection{Incomparable kernels}\label{subsec:ic=1}
	
	\noindent
	Until the end of Section \ref{subsec:ic=1}, we work under the following additional assumptions. Assume that $[c]\neq 0$ and thus that $T=\ker \tilde c+pG$ is a maximal subgroup of $G$. 
	We assume, moreover, that $M,M_{\omega},M_{\vartheta}$ are not contained in $T$ and that 
	$[\omega],[\vartheta]\neq 0$. In particular, we have that $i_c([\omega])=i_c([\vartheta])=1$
	and that $G=M+T=M_{\omega}+T=M_{\vartheta}+T$. The goal of the present section is to prove Proposition \ref{prop:ic=1}, which coincides with Proposition \ref{prop:imcup} under the last assumptions.
	
	\begin{proposition}\label{prop:ic=1}
		One has $[\omega]\sim_{A_c}[\vartheta]$ if and only if $(\lL([\omega]), \lL_c([\omega]))=(\lL([\vartheta]),\lL_c([\vartheta]))$.
	\end{proposition}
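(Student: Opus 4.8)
The plan is to prove the two implications separately, with the forward one immediate and the reverse one requiring a normal form adapted to $T$.

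For $(1)\Rightarrow(2)$: if $[\vartheta]=a\cdot[\omega]$ with $a=(\sigma,\lambda)\in A_c$, then $a$ fixes $[c]$, so in the notation of Proposition~\ref{prop:LevelsTindices} we have $[d]=[c]$ and $T_d=T_c=T$, while $M_\vartheta=\sigma(M_\omega)=M_a$ by Corollary~\ref{cor:mmap}. Part (3) of that proposition then gives $\lL(M_\omega)=\lL(M_\vartheta)$, $\lL_c(M_\omega)=\lL_c(M_\vartheta)$, and $i_c(M_\omega)=i_c(M_\vartheta)$, which is exactly (2). Thus all four level numbers are constant along $A_c$-orbits.

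The main work is $(2)\Rightarrow(1)$. Throughout I use the running conventions $G=\gen{b}\oplus K$ with $K=\bigoplus_{i=1}^r\gen{b_i}=\ker\tilde c$ and $T=\gen{pb}\oplus K$, together with $G=M_\omega+T=M_\vartheta+T$; write $m=\log_p|b|$. By Lemma~\ref{lemma:GTLevels} the invariants satisfy $\ell([\omega])\le\ell_c([\omega])\le\L([\omega])\le\L_c([\omega])$, and likewise for $[\vartheta]$. As in the proofs of Propositions~\ref{prop:lL} and~\ref{prop:ic=0}, it suffices to build $\sigma\in\Aut(G)$ with $\sigma(T)=T$ and $\sigma(M_\omega)=M_\vartheta$: indeed, Corollary~\ref{cor:stabT} then furnishes $\lambda\in\Z_p^*$ with $a=(\sigma,\lambda)\in A_c$, Corollary~\ref{cor:mmap} (via Lemma~\ref{lemma:kernels}) gives that $a\cdot[\omega]$ and $[\vartheta]$ have the same kernel and hence agree up to a scalar in $\F_p^*$, and that scalar is absorbed by an element $(\mu,\mu)\in A_c$ using Lemma~\ref{lem:lambda=1}(2).

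The heart of the argument is therefore a $T$-adapted normal form for a kernel $M$ with $i_c(M)=1$, playing the role that Lemma~\ref{lem:i=0,overview M} played in the inclusion case. Writing $\lL(M)=(l,L)$ and $\lL_T(M)=(l_c,L_c)$, I would produce level-realizing elements $x,y\in G$ and a complement $\tilde M\subseteq M\cap T$ with $G=\gen{x}\oplus\gen{y}\oplus\tilde M$, $|x|=p^l$, $|y|=p^L$, $G=\gen{x,y}+M$, and — the new ingredient — a prescribed placement of $b$ in these coordinates, so that $T$ is recovered from the decomposition in a manner depending only on $(l,L,l_c,L_c,m)$. The decisive point is how $b$ sits relative to $M$: since $G[p^i]$ and $T[p^i]$ coincide for $i<m$ and differ only in the $b$-direction for $i\ge m$ (where $G[p^i]$ contains $b$ but $T[p^i]$ only contains $pb$), the gaps $l<l_c$ and $L<L_c$ are governed precisely by whether $b$, respectively a level-$L$ generator, lies in $M$, and this dichotomy dictates which of $x,y$ must be taken transversal to $T$. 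Matching the normal forms of $M_\omega$ and $M_\vartheta$ — equal orders $|x_\omega|=|x_\vartheta|$ and $|y_\omega|=|y_\vartheta|$, an isomorphism $\tilde M_\omega\cong\tilde M_\vartheta$, and $b$ in the same slot — then yields the desired $\sigma$ fixing $T$.

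The main obstacle is exactly this normal form: unlike the $i_c=0$ case treated in Proposition~\ref{prop:ic=0}, four a priori independent invariants (subject only to $l\le l_c\le L\le L_c$) together with the transversality of $b$ to $T$ must all be honoured by a single automorphism fixing $T$, which forces a case analysis on the positions of the torsion layers $G[p^{\bullet}]$ and $T[p^{\bullet}]$ relative to $M$. One should keep in mind that the gap $l<l_c$ genuinely occurs (e.g.\ when $b$ has small order and $b\notin M$), so no invariant is redundant. An alternative that may streamline the bookkeeping is to first match $M_\omega\cap T$ with $M_\vartheta\cap T$ inside $T$ using Corollary~\ref{cor:lLiso} (applied with $T$ in place of $G$, noting that $\lL_c(M)$ is by definition the level vector of $M\cap T$ computed inside $T$, and that $pT\subseteq M\cap T$), and then to extend across the $b$-direction using the $G$-levels; the difficulty of making the extension respect both $T$ and $M$ is, however, the same.
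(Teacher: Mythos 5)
Your overall architecture is sound and matches the paper's: the implication $(1)\Rightarrow(2)$ via Proposition \ref{prop:LevelsTindices} is exactly right, and the reduction of $(2)\Rightarrow(1)$ to producing $\sigma\in\Aut(G)$ with $\sigma(T)=T$ and $\sigma(M_\omega)=M_\vartheta$ --- using Corollary \ref{cor:stabT} to recover a compatible $\lambda$, the injectivity of $\mathfrak{m}_G$ to conclude equality in $\mathbb{P}\im\cup$, and Lemma \ref{lem:lambda=1}(2) to absorb the residual scalar --- is precisely the mechanism the paper uses (e.g.\ at the end of the proofs of Proposition \ref{prop:ic=0} and Lemma \ref{lemma:(NS)twocases}).

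However, the proposal stops exactly where the proof begins. Everything after ``the heart of the argument is therefore a $T$-adapted normal form'' is a description of what such a normal form should accomplish, not a construction of it, and you say so yourself (``I would produce\dots'', ``the difficulty of making the extension respect both $T$ and $M$ is, however, the same''). What is missing is the entire content of Lemmas \ref{lemma:xTyT} and \ref{lemma:bM} and the three case lemmas \ref{lemma:split}, \ref{lemma:(NS)l_T<L}, \ref{lemma:(NS)twocases}: one must (i) exhibit $x,y\in\ker\tilde{c}$ of orders $p^{l_c},p^{L_c}$ and a complement $\tilde{M}\subseteq\ker\tilde{c}\cap M$ with $G=\gen{b}\oplus\gen{x}\oplus\gen{y}\oplus\tilde{M}$ (this requires a genuinely nontrivial inductive adjustment of generators $s_i\mapsto s_i''$, since a naive complement need not lie in $M$); and (ii) pin down how $b$ sits relative to $M$, namely determine for which of the configurations of $(l,l_c,L,L_c)$ the coefficients in $b_M=b-\alpha x-\delta y\in M$ are forced to be units or zero. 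Your heuristic that ``the gaps $l<l_c$ and $L<L_c$ are governed precisely by whether $b$, respectively a level-$L$ generator, lies in $M$'' is only approximately correct: the actual trichotomy is $l=l_c\leq L=L_c$ (where $(\alpha,\delta)$ is unconstrained and $|b_M|=|b|$), $l<l_c<L=L_c$ (where $\alpha\in\Z_p^*$, $\delta=0$), and the remaining cases $L<L_c$ or $l<l_c=L=L_c$ (where $\delta\in\Z_p^*$), and each case requires its own choice of the twisting scalar $\lambda$ (e.g.\ $\lambda=\alpha_\vartheta\alpha_\omega^{-1}$ in the second case) to make $\sigma$ both preserve $T$ and carry $M_\omega$ to $M_\vartheta$. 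Without this case analysis there is no argument that the four numbers in $(2)$ actually suffice to match the two configurations under $\Aut(G)_T$, which is the whole point of the proposition; so as it stands the proposal is a correct plan with the decisive step unproved.
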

	
	\noindent
	The proof of Proposition \ref{prop:ic=1} is divided into cases depending on the relations between $G$-levels and $T$-levels. 
	
	\begin{lemma}\label{lemma:xTyT}
		Write $\lL(M)=(l,L)$ and $\lL_T(M)=(l_c,L_c)$. Then there exist $f,g\in \Hom(G,\F_p)$, $x,y\in\graffe{b_1,\ldots,b_r}$ of orders respectively $p^{l_c}$ and $p^{L_c}$, and $\tilde{M}\subseteq \ker \tilde{c}\cap M$
		such that the following hold:
		\begin{enumerate}[label=$(\arabic*)$]
			\item $M=\ker f\cap \ker g$,
			\item $f(x)=1$, $g(x)=0$, $f(y)=0$, and $g(y)=1$,
			\item $G=\gen{b}\oplus\gen{x}\oplus\gen{y}\oplus {\tilde{M}}$.
		\end{enumerate}
		Moreover, there exist two distinct elements in $\graffe{b,x,y}$ of orders respectively $p^l$ and $p^L$.
	\end{lemma}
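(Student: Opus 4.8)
The plan is to exploit the special role of the generator $b$ by transferring the whole problem into the complement $N=\ker\tilde c=\bigoplus_{i=1}^r\langle b_i\rangle$ of $\langle b\rangle$ in $G$, where I no longer have to worry about $T$. First I would record the structural reductions. Since $pG\subseteq M$ we have $pb\in M$, and as $pb\in T$ we get $pb\in M\cap T$; by modularity this yields $M\cap T=\langle pb\rangle\oplus(M\cap N)$ with $M\cap N=M\cap T\cap N$. Comparing the indices $|T:M\cap T|=p^2$ and $|T:N|=|pb|$ shows that $M\cap N$ has index $p^2$ in $N$ and contains $pN$. Writing $T[p^i]=\langle pb\rangle[p^i]\oplus N[p^i]$ and using $M\cap T=\langle pb\rangle\oplus(M\cap N)$, one checks $T[p^i]\subseteq M\cap T\iff N[p^i]\subseteq M\cap N$ and $T[p^j]+(M\cap T)=T\iff N[p^j]+(M\cap N)=N$; hence the $N$-levels of $M\cap N$ coincide with the $T$-levels $(l_c,L_c)$ of $M$. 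Finally, $M\not\subseteq T$ forces $M+N=G$: choosing $w=ub+v\in M$ with $u\in\Z_p^*$ and $v\in N$, the element $ub$ lies in $M+N$, so $\langle b\rangle\subseteq M+N$. The second isomorphism theorem then gives $N/(M\cap N)\cong G/M$, and from $w\in M$ I read off $b\equiv -u^{-1}v\pmod M$.

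Second, I would construct $x,y,f,g,\tilde M$ by applying Lemma \ref{lemma:xyA-orbits} to the pair $(N,M\cap N)$: this is again a finite abelian $p$-group with an index-$p^2$ subgroup containing $pN$, so the lemma applies verbatim with generating set $\{b_1,\dots,b_r\}$. It produces $x,y\in\{b_1,\dots,b_r\}$ of orders $p^{l_c}$ and $p^{L_c}$, a complement $\tilde M\subseteq M\cap N\subseteq\ker\tilde c\cap M$ with $N=\langle x\rangle\oplus\langle y\rangle\oplus\tilde M$, and $f_N,g_N\in\Hom(N,\F_p)$ cutting out $M\cap N$ with $f_N(x)=g_N(y)=1$, $g_N(x)=f_N(y)=0$. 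Because I argue inside $N$, whose cyclic decomposition uses only the $b_i$, the generators $x,y$ automatically avoid $b$, as required. I would then extend $f_N,g_N$ to $G=\langle b\rangle\oplus N$ by setting $f(b)=-u^{-1}f_N(v)$ and $g(b)=-u^{-1}g_N(v)$; this is precisely the choice annihilating $w$, so that $\ker f\cap\ker g=M$, while $f|_N=f_N$ and $g|_N=g_N$. Properties $(1)$–$(3)$ then follow at once, since $G=\langle b\rangle\oplus N=\langle b\rangle\oplus\langle x\rangle\oplus\langle y\rangle\oplus\tilde M$ and $x,y\in N$ give the stated values under $f,g$.

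The remaining, and genuinely delicate, step is the final clause. Writing $p^m=|b|$, I would compute the $G$-levels $(l,L)$ of $M$ from the decomposition $G=\langle b\rangle\oplus N$, keeping track of the image $\bar b\equiv -u^{-1}\bar v$ in $G/M\cong N/(M\cap N)$. Using $pb\in M$, the point is that $\langle b\rangle[p^i]\subseteq M$ for all $i\le m-1$ and, when $b\notin M$, only for those $i$; this gives $G[p^i]\subseteq M\iff i\le\min(m-1,l_c-1)$, so $l=l_c$ if $b\in M$ and $l=\min(m,l_c)$ otherwise. Dually, $G[p^j]+M=G$ is governed by whether $\langle\bar b\rangle$ completes the image $W_j=(N[p^j]+(M\cap N))/(M\cap N)$ of $G[p^j]$ to all of $G/M$; since these images form a nested chain in the two-dimensional space $G/M$, the intermediate ones (for $l_c\le j<L_c$) all coincide with a single line $W$, which collapses $L$ to either $m$ (when $\bar v\notin W$) or $L_c$ (when $\bar v\in W$). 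The main obstacle is exactly this bookkeeping: I must rule out $m<L<L_c$ and show that whenever $l<l_c$ or $L<L_c$, the missing order is supplied by $p^m=|b|$. Enumerating the cases according to how $m$ compares with $l_c$ and $L_c$, and whether $\bar v$ lies in $W$, I expect to exhibit in each case two distinct elements of $\{b,x,y\}$ of orders $p^l$ and $p^L$, which completes the proof.
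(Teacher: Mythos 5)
Your proposal is correct, and it takes a genuinely different route from the paper's for the main construction. The paper works inside $T$: it applies Lemma \ref{lemma:lLstart} to the pair $(T, M\cap T)$ to produce $x,y$, gets $f,g$ from Lemma \ref{lemma:T=kerf'}, and then spends most of the proof massaging an arbitrary complement $S$ of $\gen{x}\oplus\gen{y}$ in $\ker\tilde{c}$ into a complement $\tilde M$ contained in $M$, via a multi-stage correction of its generators (with a separate contradiction argument to show the correction never gets stuck). You instead work inside $N=\ker\tilde c$: the splitting $M\cap T=\gen{pb}\oplus(M\cap N)$ (valid because $pb\in pG\subseteq M$) lets you check that $M\cap N$ has index $p^2$ in $N$, contains $pN$, and has $N$-levels equal to $\lL_T(M)$, so that Lemma \ref{lemma:xyA-orbits} applied to $(N,M\cap N)$ hands you $x,y\in\graffe{b_1,\ldots,b_r}$, the maps $f_N,g_N$, \emph{and} the complement $\tilde M\subseteq M\cap N$ in one stroke; the extension of $f_N,g_N$ to $G$ killing $w=ub+v$ is then routine. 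This buys a shorter and more structural argument (the delicate complement adjustment is already packaged in Lemma \ref{lemma:xyA-orbits}), at the mild cost of having to verify the transfer of levels from $T$ to $N$, which you do correctly.

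The only place where you fall short of a complete proof is the final clause, which you leave as a case-enumeration plan (``I expect to exhibit\dots''). The plan is sound and the cases do close; to finish, invoke Lemma \ref{lemma:levelsTlevels}, which gives $l\le l_c\le L\le L_c$ because $M\not\subseteq T$. If $l<l_c$, your formula $l=\min(m,l_c)$ forces $b\notin M$, $m<l_c$ and $|b|=p^l$, while your computation of $L$ gives $L\in\graffe{\max(m,l_c),L_c}=\graffe{l_c,L_c}$, so one of $x,y$ has order $p^L$. If $l=l_c$ but $L<L_c$, then $b\notin M$ and $L=\max(m,l_c)=m$, so $b$ has order $p^L$ and $x$ has order $p^l$. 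The remaining case $l=l_c$, $L=L_c$ is handled by $x,y$ themselves. (For comparison, the paper disposes of this clause with a one-line appeal to Lemma \ref{lemma:lLstart} applied with the decomposition $G=\gen{b}\oplus\gen{x}\oplus\gen{y}\oplus\tilde M$, which works because $\tilde M\subseteq M$; your explicit bookkeeping is more work but also more transparent.)
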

	
	\begin{proof}
		Let $x,y$ be as in Lemma \ref{lemma:lLstart}, where $\cor{C}$ is taken to be $\cor{B}$. Then (1) and (2) follow directly from Lemma~\ref{lemma:T=kerf'}. We now prove (3). To this end, define $\cor{B}'=\{pb,b_1,\ldots,b_{r+1}\}$ and let $M'=M\cap T$, which has index $p^2$ in $T$ and contains $pG$. Then, with $T$, $\cor{B}'$ and $M'$ in the roles of $G$, $\cor{C}$ and $M$, Lemma \ref{lemma:lLstart} yields $x,y\in\cor{B'}$ such that $T=\gen{x,y}+M'$ and $(|x|,|y|)=(p^{l_c},p^{L_c})$. Since $pb\in M'$, we derive that $x,y\in\cor{B}\setminus\{b\}$ and in particular $x,y\in\ker\tilde{c}$. Now applying Theorem~\ref{th:gustavo} to $T$, $M'$ and $H=\ker\tilde{c}$, we get a subgroup $\tilde{M}\subseteq M'\cap \ker \tilde{c}$ such that $\ker\tilde{c}=\gen{x}\oplus\gen{y}\oplus \tilde{M}$. Thanks to Lemma~\ref{lemma:lLstart}, two elements out of $\cor{C}=\{b,x,y\}$ have orders $p^l$ and $p^L$ and so we are done.
	\end{proof}
	
	\noindent
	Recall that, by Lemma \ref{lemma:levelsTlevels}, we have that $\ell(M)\leq \ell_T(M)\leq \L(M)\leq \L_T(M)$ and so, from the last result, we derive the following corollary in a straightforward way.
	
	\begin{corollary}
		One has $\ell(M)=\ell_T(M)$ or $\ell_T(M)=\L(M)$ or $\L(M)=\L_T(M)$. 
	\end{corollary}

	\noindent
	Until the end of Section \ref{subsec:ic=1}, we let $x$, $y$, and $\tilde{M}$ be as in Lemma \ref{lemma:xTyT}. We also write $\lL(M)=(l,L)$ and $\lL_T(M)=(l_c,L_c)$.
	
	\begin{lemma}\label{lemma:bM}
		There exist $\alpha,\delta\in\Z_p$ such that $b_M=b-\alpha x-\delta y\in M\setminus (M\cap T)$ and
		\[
		(\alpha,\delta)\in\begin{cases}
			\Z_p\times \Z_p & \textup{if } l=l_c\leq L=L_c, \\
			\Z_p^*\times \graffe{0} & \textup{if } l<l_c<L=L_c, \\
			\Z_p\times \Z_p^* & \textup{otherwise}.
		\end{cases}
		\]
		Moreover, if $l=l_c\leq L=L_c$, then $b_M$ and $b$ have the same order.
	\end{lemma}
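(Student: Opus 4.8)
The plan is to pin down the residues of $\alpha$ and $\delta$ from the position of $b$ relative to $M$, and then to choose their exact values so as to control $|b_M|$. I would begin with three reductions. Since $x,y\in\ker\tilde c\subseteq T$ whereas $b\notin T$, every element $b-\alpha x-\delta y$ lies outside $T$ and hence outside $M\cap T$; so it suffices to force $b_M\in M$. Because $M=\ker f\cap\ker g$ with $f(x)=1$, $g(x)=0$, $f(y)=0$, $g(y)=1$, the membership $b_M\in M$ is equivalent to $f(b_M)=g(b_M)=0$, that is, to $\alpha\equiv f(b)$ and $\delta\equiv g(b)\pmod p$. Finally, in the direct sum $G=\gen{b}\oplus\gen{x}\oplus\gen{y}\oplus\tilde M$ the order of $b_M$ is $\mathrm{lcm}(|b|,|\alpha x|,|\delta y|)$; once the residues are fixed, $|\alpha x|$ equals $|x|=p^{l_c}$ when $f(b)\neq 0$ and can be made $1$ (take $\alpha=0$) when $f(b)=0$, and similarly for $|\delta y|$ and $g(b)$. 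Thus the whole statement is governed by whether $f(b)$ and $g(b)$ vanish, and the crux is to read these off from the four levels.

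To do so I would work in $G/M\cong\F_p^2$, where $[x]=(1,0)$, $[y]=(0,1)$ and $[b]=(f(b),g(b))$. Intersecting the torsion subgroups with the summands gives $\gen{x}[p^i]\subseteq M$ exactly for $i<l_c$ and $\gen{y}[p^i]\subseteq M$ exactly for $i<L_c$, while $\gen{b}[p^i]\not\subseteq M$ only when $b\notin M$ and $i\geq\log_p|b|$. Comparing with the definitions of $\ell$ and $\L$ in Definition \ref{def:lLH} then yields, for $b\notin M$, the relation $l=\min\graffe{l_c,\log_p|b|}$ together with the fact that $\L(M)<L_c$ can occur only if the spanning of $G/M$ at a level below $L_c$ uses $[b]$, i.e. only if $g(b)\neq 0$. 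In particular $b\in M$ already forces $l=l_c$ and $L=L_c$.

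I would then treat the three cases. If $l=l_c\le L=L_c$, I take $\alpha,\delta$ to be the least nonnegative residues of $f(b),g(b)$; here $l=l_c$ gives $\log_p|b|\geq l_c$, and whenever $g(b)\neq 0$ the generation computation forces $\log_p|b|\geq L_c$, so $|\alpha x|\leq|x|\leq|b|$ and $|\delta y|\leq|y|\leq|b|$, whence $|b_M|=|b|$ with no constraint on $(\alpha,\delta)$; this also settles the final ``same order'' claim. If $l<l_c<L=L_c$, the orders $p^{l_c}\neq p^{L_c}$ fix the roles of $x$ and $y$; here $l<l_c$ forces $\log_p|b|=l<l_c$ and $b\notin M$, while $L=L_c$ excludes $g(b)\neq 0$ (otherwise $[x]$ and $[b]$ would span $G/M$ already at level $\max\graffe{l_c,\log_p|b|}=l_c<L_c$), so $g(b)=0$ and necessarily $f(b)\neq 0$, giving $\delta=0$ and $\alpha\in\Z_p^*$. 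In every remaining configuration I must instead produce $g(b)\neq 0$, and then $\delta\in\Z_p^*$: when $l_c<L_c$ one checks that the only configurations with $l_c<L_c$ and $L=L_c$ are the first two cases, so any other has $\L(M)<L_c$, which already forces $g(b)\neq 0$.

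The delicate point, and what I expect to be the main obstacle, is the boundary configuration $l<l_c=L=L_c$. Here $x$ and $y$ carry the same order $p^{l_c}$, and a careless choice may give $g(b)=0$, which would wrongly force $\delta=0$. The remedy is that Lemma \ref{lemma:xTyT} only guarantees \emph{some} admissible pair of order-$p^{l_c}$ generators of $T$ modulo $M\cap T$: since $b\notin M$, the image of $b$ in $G/(M\cap T)\cong\F_p^3$ has a nonzero component $\mu$ in the rank-two subspace $T/(M\cap T)$, and as both transverse directions there have order $p^{l_c}$ I may choose $x,y$ so that $\mu$ has nonzero $y$-coordinate, i.e. $g(b)\neq 0$; this delivers $\delta\in\Z_p^*$. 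Verifying that this choice is compatible with all of $(1)$--$(3)$ of Lemma \ref{lemma:xTyT}, and that the preceding case computations are exhaustive, is the part requiring the most care.
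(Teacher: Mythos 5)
Your proof is correct, and it rests on the same ingredients as the paper's: the decomposition $G=\gen{b}\oplus\gen{x}\oplus\gen{y}\oplus\tilde M$ from Lemma \ref{lemma:xTyT}, the observation that $b_M\in M$ forces $\alpha\equiv f(b)$ and $\delta\equiv g(b)\bmod p$, and the same final move of swapping $x$ and $y$ in the degenerate case $l<l_c=L=L_c$ (legitimate precisely because $|x|=|y|$ there). The organization is genuinely different, though: where the paper derives the constraints on $(\alpha,\delta)$ case by case through ad hoc contradiction arguments against the minimality of $\L(M)$ and $\L_T(M)$, you first compute the levels summand-wise in the direct sum, obtaining the closed-form relation $l=\min\graffe{l_c,\log_p|b|}$ for $b\notin M$ and the criterion that $\L(M)<\L_T(M)$ forces $g(b)\neq 0$; the case analysis then reduces to reading off which of $f(b),g(b)$ vanish, and the ``moreover'' clause on $|b_M|$ falls out uniformly from $|b_M|=\mathrm{lcm}(|b|,|\alpha x|,|\delta y|)$. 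One phrasing to repair in the last paragraph: with respect to the basis $\graffe{[b],[x],[y]}$ of $G/(M\cap T)$, the component of $[b]$ in $T/(M\cap T)=\gen{[x],[y]}$ is zero; what your argument actually needs (and uses) is that the image of $b$ in $G/M\cong T/(M\cap T)$ is nonzero because $b\notin M$, i.e.\ $(f(b),g(b))\neq(0,0)$, after which the swap goes through exactly as in the paper.
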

	
	\begin{proof}
		We start by recalling that $G=\gen{x,y}+M$ and $M$ contains $pG$ and has index $p^2$ in $G$. As a consequence there exist uniquely determined $\alpha,\delta\in\{0,\ldots,p-1\}$ and $b_M\in M$ with the property that $b=\alpha x+\delta y+b_M$. Fix such triple and note that $b_M\notin M\cap T$ because $b\notin T$ while $x,y\in T$. We will prove the following:
		\begin{enumerate}[label=$(\roman*)$]
			\item if $l=l_c\leq L=L_c$, then $|b|=|b_M|$,
			\item if $l<l_c<L=L_c$, then $\alpha\neq 0$ and $\delta=0$,
			\item in all other cases $\delta\neq 0$.
		\end{enumerate} 
		We start by assuming that $l=l_c\leq L=L_c$. If $|b|\geq p^L$, then clearly $|b|=|b_M|$ and, if $|b|<p^l$, then $b\in M$ and thus again $|b|=|b_M|$. We assume in conclusion that $p^l\leq |b|<p^L$. In this case $\delta=0$ because otherwise $y\in \gen{b,x}+M$ yielding to the contradiction $G=\gen{b,x}+M=G[p^{L-1}]+M$. Since $\delta=0$, we readily derive $|b|=|b_M|$. 
		
		Assume now that $l<l_c<L=L_c$. As one of $b,x,y$ has order $p^l$, we have that $|b|=p^l$. Thus, if $\delta$ were nonzero, we would get a similar contradiction as the one from the previous case. Note that, $\delta$ being zero, $\alpha$ can't be otherwise we would have $b\in M$. This would yield a contradiction because, in such case, we would have that 
		\[
		G[p^l]=\gen{x^{p^{l_c-l}}}\oplus\gen{y^{p^{L_c-l}}}\oplus\gen{b}\oplus\tilde{M}[p^{l}]\subseteq pG+M=M,
		\]
		contradicting the minimality of $l$.
		
		We conclude by looking at the remaining cases. 
		Assume first that $L<L_c$. Since two of the elements $b,x,y$ have order $p^l$ and $p^L$, we have $|b|, |x|\leq p^L<p^{L_c}$. If, for a contradiction, $\delta$ were zero, we would have $|b_M|<p^{L_c}$ and consequently
		\[
		G=M+G[p^L]=(M\cap T)+\gen{b_M}+G[p^L]=(M\cap T)+G[p^{L_c-1}].
		\]
		In particular, this would imply that $T=(M\cap T)+T[p^{L_c-1}]$, contradicting the definition of $L_c$.
		We are now left with considering the case $l<l_c=L=L_c$. It follows from Lemma \ref{lemma:xTyT} that $|b|=p^l$ and, in particular, $b$ is not contained in $M$.
		Now, the elements $x$ and $y$ having the same orders, we assume without loss of generality that $\delta$ is invertible.
	\end{proof}

	\begin{lemma}\label{lemma:split}
		Assume that $(\lL([\omega]),\lL_c([\omega]))=(\lL([\vartheta]),\lL_c([\vartheta]))=(l,L,l_c,L_c)$ and, additionally, that $l=l_c\leq L=L_c$.
		Then one has $[\omega]\sim_{A_c}[\vartheta]$. 
	\end{lemma}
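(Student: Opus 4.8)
The plan is to exhibit an explicit automorphism $\sigma$ of $G$ that simultaneously carries the cyclic decomposition attached to $M_\omega$ onto the one attached to $M_\vartheta$ and stabilizes $T$, lift it to an element of $A_c$ via Corollary \ref{cor:stabT}, and finally absorb a leftover scalar using Lemma \ref{lem:lambda=1}(2). First I would apply Lemma \ref{lemma:xTyT} to each of $M_\omega$ and $M_\vartheta$, obtaining $x_\omega,y_\omega$ and $x_\vartheta,y_\vartheta$ (all lying in $\ker\tilde c$) together with complements $\tilde{M}_\omega,\tilde{M}_\vartheta\subseteq\ker\tilde c$, and then Lemma \ref{lemma:bM} to produce $b_{M_\omega},b_{M_\vartheta}\in M\setminus(M\cap T)$. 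Since by hypothesis $\lL_c([\omega])=\lL_c([\vartheta])=(l_c,L_c)$, the orders match: $|x_\omega|=|x_\vartheta|=p^{l_c}$ and $|y_\omega|=|y_\vartheta|=p^{L_c}$; and since we are in the case $l=l_c\le L=L_c$, Lemma \ref{lemma:bM} gives $|b_{M_\omega}|=|b|=|b_{M_\vartheta}|$. As $b_M=b-\alpha x-\delta y$ generates the same cyclic summand as $b$, both $G=\gen{b_{M_\omega}}\oplus\gen{x_\omega}\oplus\gen{y_\omega}\oplus\tilde{M}_\omega$ and the analogous decomposition for $\vartheta$ hold; comparing the invariant factors of $G$ and cancelling the three matching summands forces $\tilde{M}_\omega\cong\tilde{M}_\vartheta$.

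Next I would define $\sigma\in\Aut(G)$ on this decomposition by $b_{M_\omega}\mapsto b_{M_\vartheta}$, $x_\omega\mapsto x_\vartheta$, $y_\omega\mapsto y_\vartheta$, and any fixed isomorphism $\tilde{M}_\omega\to\tilde{M}_\vartheta$; this is well defined precisely because the corresponding summands have matching orders and isomorphism types. A routine check on the two defining functionals $f,g$ shows $M_\omega=\gen{b_{M_\omega}}\oplus\gen{px_\omega}\oplus\gen{py_\omega}\oplus\tilde{M}_\omega$, with the identical formula for $\vartheta$, so that $\sigma(M_\omega)=M_\vartheta$ by construction. The crucial point is then $\sigma(T)=T$: using $\ker\tilde c=\gen{x_\omega}\oplus\gen{y_\omega}\oplus\tilde{M}_\omega$ (both sides are complements of $\gen{b}$ of the same size) and $pb\equiv pb_{M_\omega}$ modulo $\ker\tilde c$, one obtains $T=\gen{pb_{M_\omega}}\oplus\gen{x_\omega}\oplus\gen{y_\omega}\oplus\tilde{M}_\omega$, together with the same expression for $\vartheta$; reading off the image shows $\sigma(T)=T$, i.e.\ $\sigma\in A_T$. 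By Corollary \ref{cor:stabT} there exists $\lambda\in\Z_p^*$ with $a=(\sigma,\lambda)\in A_c$.

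Finally, Lemma \ref{lemma:kernels} gives $\mathfrak{m}_G(a[\omega])=\sigma(M_\omega)=M_\vartheta=\mathfrak{m}_G([\vartheta])$, so injectivity of $\mathfrak{m}_G$ on $\mathbb{P}\im\cup$ yields $a[\omega]=\mu[\vartheta]$ for some $\mu\in\F_p^*$. Multiplying by $(\mu,\mu)\in A_c$, which fixes $[c]$ and scales $[\vartheta]$ by $\mu^{-1}$ by Lemma \ref{lem:lambda=1}(2), clears the scalar and gives an element of $A_c$ sending $[\omega]$ to $[\vartheta]$, so $[\omega]\sim_{A_c}[\vartheta]$. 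I expect the main obstacle to be establishing $\sigma(T)=T$ and $\sigma(M_\omega)=M_\vartheta$ at the same time: this is exactly where the hypothesis $l=l_c\le L=L_c$ is essential, since it is what forces $b_M$ to retain the full order of $b$ and hence makes the single decomposition $G=\gen{b_M}\oplus\gen{x}\oplus\gen{y}\oplus\tilde{M}$ adapted to $M$ and $T$ simultaneously; in the remaining cases $b_M$ drops order and the two subgroups can no longer be handled by one common decomposition.
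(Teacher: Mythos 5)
Your argument is correct and follows the paper's own route: both proofs build the adapted decompositions $G=\gen{b_{\bullet}}\oplus\gen{x_{\bullet}}\oplus\gen{y_{\bullet}}\oplus\tilde{M}_{\bullet}$ from Lemmas \ref{lemma:xTyT} and \ref{lemma:bM} (using that $l=l_c\leq L=L_c$ keeps $|b_M|=|b|$) and then match the two decompositions by an automorphism fixing $T$ and carrying $M_\omega$ to $M_\vartheta$. The only difference is the endgame: the paper chooses $\lambda$ and twists $y_\omega\mapsto\lambda y_\vartheta$ so that $(\sigma,\lambda)\in A_c$ sends $[\omega]$ to $[\vartheta]$ on the nose, whereas you first land in the right projective class and then clear the residual scalar via Corollary \ref{cor:stabT} and Lemma \ref{lem:lambda=1}(2) --- exactly the device the paper itself uses in Proposition \ref{prop:ic=0} and Lemma \ref{lemma:(NS)twocases}, so this is an equally valid finish.
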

	
	\begin{proof}
		Let $f_{\omega},g_{\omega},f_{\vartheta},g_{\vartheta}$ play the roles of $f$ and $g$ from Lemma \ref{lemma:xTyT} respectively for $M_{\omega}$ and $M_{\vartheta}$. Let, analogously
		$x_\omega,y_\omega,x_\vartheta,y_\vartheta\in\ker\tilde{c}$ play the roles of $x$ and $y$ and let moreover $\tilde{M}_{\omega}$ and $\tilde{M}_{\vartheta}$ play the roles of $\tilde{M}$. 
		Write $b_{\omega}$ and $b_{\vartheta}$ for the equivalents of $b_M$, which we know have the same order thanks to the case $l=l_c\leq L=L_c$ in Lemma \ref{lemma:bM}. We have that $$G=\gen{b_\omega}\oplus\gen{x_\omega}\oplus\gen{y_\omega}\oplus\tilde{M}_\omega=\gen{b_\vartheta}\oplus\gen{x_\vartheta}\oplus\gen{y_\vartheta}\oplus\tilde{M}_\vartheta.$$
		Let now $\lambda\in\Z_p$ be such that $\tilde{c}(b_{\vartheta})=\lambda\tilde{c}(b_\omega)$ and note that such $\lambda$ exists by the definition of $b_M$. Let, moreover, $\sigma:G\rightarrow G$ be an isomorphism satisfying
		\[
		x_\omega\mapsto x_\vartheta, \quad y_\omega\mapsto \lambda y_\vartheta, \quad b_{\omega}\mapsto b_{\vartheta}, \quad \sigma(\tilde{M}_{\omega})={\tilde{M}_{\vartheta}}. 
		\]
		By construction, $a=(\sigma,\lambda)$ belongs to $A_c$ and satisfies $a[\omega]=[\vartheta]$.
	\end{proof}

	\begin{lemma}\label{lemma:(NS)l_T<L}
		Assume that $(\lL([\omega]),\lL_c([\omega]))=(\lL([\vartheta]),\lL_c([\vartheta]))=(l,L,l_c,L_c)$ and, additionally, that $l<l_c< L=L_c$.
		Then one has $[\omega]\sim_{A_c}[\vartheta]$. 
	\end{lemma}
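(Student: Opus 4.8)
The plan is to follow the template of Lemma~\ref{lemma:split} and exhibit a single $\sigma\in\Aut(G)$ with $(\sigma,1)\in A_c$ and $(\sigma,1)[\omega]=[\vartheta]$. First I would record the data of Lemmas~\ref{lemma:xTyT} and~\ref{lemma:bM} for both $M_\omega$ and $M_\vartheta$. Since $|x|=p^{l_c}$ and $|y|=p^{L_c}=p^L$, the generator of order $p^l$ supplied by Lemma~\ref{lemma:xTyT} must be $b$, so $|b|=p^l$; and in the present regime $l<l_c<L=L_c$ Lemma~\ref{lemma:bM} gives $b_M=b-\alpha x\in M$ of order $p^{l_c}$, with $\alpha=f(b)\in\Z_p^*$. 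As $\alpha$ is a unit and $|b|<|x|$, one has $\gen{b}\oplus\gen{b_M}=\gen{b}\oplus\gen{x}$, so the decomposition of Lemma~\ref{lemma:xTyT} may be rewritten as $G=\gen{b}\oplus\gen{b_M}\oplus\gen{y}\oplus\tilde{M}$, now exhibiting the generator $b_M\in M$, with $x=\alpha^{-1}(b-b_M)$, $\tilde{M}\subseteq\ker\tilde{c}\cap M$, and $\ker\tilde{c}=\gen{x,y}\oplus\tilde{M}$. Doing this for both classes gives $G=\gen{b}\oplus\gen{b_M^\omega}\oplus\gen{y_\omega}\oplus\tilde{M}_\omega=\gen{b}\oplus\gen{b_M^\vartheta}\oplus\gen{y_\vartheta}\oplus\tilde{M}_\vartheta$ with matching orders $p^l,p^{l_c},p^{L_c}$; by uniqueness of the cyclic decomposition of $G$ one has $\tilde{M}_\omega\cong\tilde{M}_\vartheta$, so I may fix an isomorphism between them. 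Write $\alpha_\omega=f_\omega(b)$ and $\alpha_\vartheta=f_\vartheta(b)$, both units.

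Next I would define $\sigma\in\Aut(G)$ by $\sigma(b)=b$, $\sigma(b_M^\omega)=b_M^\vartheta$, $\sigma(y_\omega)=\nu y_\vartheta$, and $\sigma|_{\tilde{M}_\omega}$ equal to the chosen isomorphism, where $\nu\in\Z_p^*$ lifts $\alpha_\omega\alpha_\vartheta^{-1}\in\F_p^*$; since $\sigma$ carries a direct sum of cyclic generators to one of the same orders, it is an automorphism. Two verifications are central. For $\sigma(T)=T$: from $x_\omega=\alpha_\omega^{-1}(b-b_M^\omega)$ one computes $\sigma(x_\omega)=\alpha_\omega^{-1}\alpha_\vartheta x_\vartheta$, whence $\sigma(\ker\tilde{c})=\sigma(\gen{x_\omega,y_\omega}\oplus\tilde{M}_\omega)=\gen{x_\vartheta,y_\vartheta}\oplus\tilde{M}_\vartheta=\ker\tilde{c}$ and therefore $\sigma(T)=\ker\tilde{c}+pG=T$. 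For $(\sigma,1)\in A_c$: as $\tilde{c}$ vanishes on $\ker\tilde{c}$ and $\tilde{c}(b_M^\omega)=\tilde{c}(b)=\tilde{c}(b_M^\vartheta)$, evaluating on the basis shows $\tilde{c}\circ\sigma^{-1}=\tilde{c}$, so $(\sigma,1)$ fixes $[c]=[\beta(\tilde{c})]$, consistently with Corollary~\ref{cor:stabT}.

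Finally I would compute the action on $[\omega]=[f_\omega\cup g_\omega]$ through~\eqref{eq:ActionAonHomToK}. Evaluating on the $\vartheta$-basis gives $f_\omega\circ\sigma^{-1}=\alpha_\omega\alpha_\vartheta^{-1}f_\vartheta$ and $g_\omega\circ\sigma^{-1}=\nu^{-1}g_\vartheta$, so that
\[
(\sigma,1)[\omega]=\bigl[(f_\omega\circ\sigma^{-1})\cup(g_\omega\circ\sigma^{-1})\bigr]=\alpha_\omega\alpha_\vartheta^{-1}\nu^{-1}[f_\vartheta\cup g_\vartheta]=[\vartheta],
\]
the last equality by the choice $\nu\equiv\alpha_\omega\alpha_\vartheta^{-1}\bmod p$; this proves $[\omega]\sim_{A_c}[\vartheta]$. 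Should tracking the cup product prove delicate, an alternative is to verify only $\sigma(M_\omega)=M_\vartheta$ (equivalently $\ker(f_\omega\circ\sigma^{-1})=\ker f_\vartheta$ and $\ker(g_\omega\circ\sigma^{-1})=\ker g_\vartheta$) and then invoke Corollary~\ref{cor:mmap} to get $(\sigma,1)[\omega]=c[\vartheta]$ for some $c\in\F_p^*$, finally absorbing $c$ by the element $(\mu\,\id_G,\mu)\in A_c$, which scales $\im\cup$ by $\mu^{-1}$ by Lemma~\ref{lem:lambda=1}(2).

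I expect the main obstacle to be the coupled bookkeeping of the middle step: the same $\sigma$ must stabilise $T$, which is what pins $\lambda$ down to $1$ and forces $\tilde{c}\circ\sigma^{-1}=\tilde{c}$, while simultaneously transporting the defining homomorphisms of $M_\omega$ to scalar multiples of those of $M_\vartheta$, the only free scalar being the dilation $\nu$ of $y$, which must be tuned to cancel the discrepancy $\alpha_\omega\alpha_\vartheta^{-1}$ between the two cocycles. The enabling observation is the re-decomposition $\gen{b}\oplus\gen{b_M}=\gen{b}\oplus\gen{x}$, legitimate precisely because $l<l_c$ makes $|b|<|b_M|$: it both supplies a generator of $M$ of order $p^{l_c}$ and keeps $b_M$ congruent to $b$ modulo $\ker\tilde{c}$, which is what renders $\tilde{c}\circ\sigma^{-1}=\tilde{c}$ transparent.
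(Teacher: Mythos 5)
Your proof is correct and is in substance identical to the paper's: your $\sigma$ (determined by $\sigma(b)=b$, $\sigma(b_M^\omega)=b_M^\vartheta$, $\sigma(y_\omega)=\alpha_\omega\alpha_\vartheta^{-1}y_\vartheta$) is exactly the paper's automorphism $x_\omega\mapsto\lambda x_\vartheta$, $y_\omega\mapsto\lambda^{-1}y_\vartheta$, $b\mapsto b$ with $\lambda=\alpha_\vartheta\alpha_\omega^{-1}$, just re-parametrized through the $b_M$-basis. You supply more of the verification (that $\tilde c\sigma^{-1}=\tilde c$ and that the cup-product scalars cancel) than the paper, which simply asserts these "by construction."
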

	
	\begin{proof}
		Let $f_{\omega},g_{\omega},f_{\vartheta},g_{\vartheta}$ play the roles of $f$ and $g$ from Lemma \ref{lemma:xTyT} respectively for $M_{\omega}$ and $M_{\vartheta}$. Let, analogously
		$x_\omega,y_\omega,x_\vartheta,y_\vartheta\in\ker\tilde{c}$ play the roles of $x$ and $y$ and let moreover $\tilde{M}_{\omega}$ and $\tilde{M}_{\vartheta}$ play the roles of $\tilde{M}$. 
		Write $b_{\omega}=b-\alpha_{\omega}x_{\omega}$ and $b_{\vartheta}=b-\alpha_{\vartheta}x_{\vartheta}$ for the equivalents of $b_M$ from Lemma \ref{lemma:bM}; then $b_{\omega}\in\ker g_\omega$ and $b_\vartheta\in\ker g_\vartheta$.
		Let now $\lambda=\alpha_\vartheta\alpha_\omega^{-1}$ and let $\sigma:G\rightarrow G$ be an isomorphism satisfying
		\[
		x_\omega\mapsto \lambda x_\vartheta, \quad y_\omega\mapsto \lambda^{-1} y_\vartheta, \quad b\mapsto b, \quad \sigma({\tilde{M}_{\omega}})={\tilde{M}_{\vartheta}}. 
		\]
		By construction we have $(\sigma,1)\tilde{c}=\tilde{c}$ and $(\sigma,1)[\omega]=[\vartheta]$. 
	\end{proof}

	\begin{lemma}\label{lemma:(NS)twocases}
		Assume that $(\lL([\omega]),\lL_c([\omega]))=(\lL([\vartheta]),\lL_c([\vartheta]))=(l,L,l_c,L_c)$ and, additionally, that $l<l_c= L=L_c$ or $L<L_c$.
		Then one has $[\omega]\sim_{A_c}[\vartheta]$. 
	\end{lemma}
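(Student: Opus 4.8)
The plan is to imitate the constructions in the proofs of Lemmas~\ref{lemma:split} and~\ref{lemma:(NS)l_T<L}: produce matching frames for $M_\omega$ and $M_\vartheta$ from Lemmas~\ref{lemma:xTyT} and~\ref{lemma:bM}, build an explicit $\sigma\in\Aut(G)$ carrying one to the other, check that $(\sigma,1)\in A_c$, and finally absorb the leftover scalar using Lemma~\ref{lem:lambda=1}(2). Writing $(\lL([\omega]),\lL_c([\omega]))=(\lL([\vartheta]),\lL_c([\vartheta]))=(l,L,l_c,L_c)$, the first thing I would record is the common feature of the two cases in the hypothesis: neither $l<l_c=L=L_c$ nor $L<L_c$ falls into the first two branches of Lemma~\ref{lemma:bM}, so in both situations the relevant $\delta$ lies in the ``otherwise'' branch and is therefore a unit.

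Concretely, I would invoke Lemmas~\ref{lemma:xTyT} and~\ref{lemma:bM} for each of $M_\omega$ and $M_\vartheta$, obtaining for $\bullet\in\graffe{\omega,\vartheta}$ homomorphisms $f_\bullet,g_\bullet$, elements $x_\bullet,y_\bullet\in\ker\tilde c$ of orders $p^{l_c}$ and $p^{L_c}$, a complement $\tilde M_\bullet\subseteq\ker\tilde c\cap M_\bullet$, and $b_\bullet=b-\alpha_\bullet x_\bullet-\delta_\bullet y_\bullet\in M_\bullet$ with $\delta_\bullet\in\Z_p^*$, so that $G=\gen{b}\oplus\gen{x_\bullet}\oplus\gen{y_\bullet}\oplus\tilde M_\bullet$.

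The decisive point, and where these cases genuinely depart from Lemma~\ref{lemma:split}, is that $b$ has order strictly smaller than $p^{L_c}$: by the last assertion of Lemma~\ref{lemma:xTyT} one checks that $|b|\in\graffe{p^l,p^L}$ with $l,L<L_c$ in both cases. Since $\delta_\bullet$ is a unit, it follows that $b_\bullet$ has order exactly $p^{L_c}$, so $b_\bullet$ may play the role of the top generator: from $y_\bullet=\delta_\bullet^{-1}(b-\alpha_\bullet x_\bullet-b_\bullet)$ one gets the alternative decomposition $G=\gen{b}\oplus\gen{x_\bullet}\oplus\gen{b_\bullet}\oplus\tilde M_\bullet$, in which $M_\bullet=\gen{b_\bullet}\oplus\gen{pb}\oplus\gen{px_\bullet}\oplus\tilde M_\bullet$. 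Because the levels agree we have $\tilde M_\omega\cong\tilde M_\vartheta$, and the two decompositions of $G$ have cyclic summands of matching orders; I would therefore define $\sigma\in\Aut(G)$ by $b\mapsto b$, $x_\omega\mapsto x_\vartheta$, $b_\omega\mapsto b_\vartheta$, together with a chosen isomorphism $\tilde M_\omega\to\tilde M_\vartheta$.

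It then remains to verify the two required properties. For $(\sigma,1)\in A_c$: since $x_\bullet,y_\bullet\in\ker\tilde c$ we have $\tilde c(b_\bullet)=\tilde c(b)$, so $\tilde c\circ\sigma^{-1}$ agrees with $\tilde c$ on each of $b,x_\vartheta,b_\vartheta$ and on $\tilde M_\vartheta$, whence $\tilde c\sigma^{-1}=\tilde c$ and $(\sigma,1)$ fixes $[c]=[\beta(\tilde c)]$. For the orbit equality, $\sigma$ carries $M_\omega$ onto $M_\vartheta$ by the displayed description of $M_\bullet$, so Lemma~\ref{lemma:kernels} forces $(\sigma,1)[\omega]$ and $[\vartheta]$ to have the same kernel and thus $(\sigma,1)[\omega]=\kappa[\vartheta]$ for some $\kappa\in\F_p^*$; composing with $(\kappa,\kappa)\in A_c$ and using Lemma~\ref{lem:lambda=1}(2) gives $[\omega]\sim_{A_c}[\vartheta]$. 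The one step requiring genuine care is the uniform order computation $|b_\bullet|=p^{L_c}$ across the sub-cases $l<l_c=L=L_c$, $l_c=L<L_c$, and $l=l_c<L<L_c$, where the unit-ness of $\delta_\bullet$ is exactly what is needed; once that is settled the construction is uniform and everything else is routine.
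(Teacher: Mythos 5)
Your proof is correct and follows essentially the same route as the paper: the automorphism you define ($b\mapsto b$, $x_\omega\mapsto x_\vartheta$, $b_\omega\mapsto b_\vartheta$, $\tilde M_\omega\to\tilde M_\vartheta$) is literally the paper's $\sigma$ (which instead sends $y_\omega\mapsto\delta_\vartheta\delta_\omega^{-1}y_\vartheta-\delta_\omega^{-1}(\alpha_\omega-\alpha_\vartheta)x_\vartheta$), your rebasing of the decomposition through $b_\bullet$ merely making explicit the order check $|b_\omega|=|b_\vartheta|=p^{L_c}$ that the paper leaves implicit. One small imprecision: in the sub-case $l<l_c=L=L_c$ the asserted inequality $L<L_c$ fails, but there $|b|=p^l<p^{L_c}$ anyway, so the conclusion you actually need survives.
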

	
	\begin{proof}
		Let $f_{\omega},g_{\omega},f_{\vartheta},g_{\vartheta}$ play the roles of $f$ and $g$ from Lemma \ref{lemma:xTyT} respectively for $M_{\omega}$ and $M_{\vartheta}$. Let, analogously
		$x_\omega,y_\omega,x_\vartheta,y_\vartheta\in\ker\tilde{c}$ play the roles of $x$ and $y$ and let moreover $\tilde{M}_{\omega}$ and $\tilde{M}_{\vartheta}$ play the roles of $\tilde{M}$. 
		Write $b_{\omega}=b-\alpha_{\omega}x_\omega-\delta_\omega y_\omega$ and $b_{\vartheta}=b-\alpha_{\vartheta}x_\vartheta-\delta_\vartheta y_\vartheta$ for the equivalents of $b_M$ from Lemma \ref{lemma:bM}.
		Let now $\lambda=\delta_\vartheta\delta_\omega^{-1}$ and let $\sigma:G\rightarrow G$ be an isomorphism satisfying
		\[
		x_\omega\mapsto x_\vartheta, \quad y_\omega\mapsto \lambda y_\vartheta -\delta_\omega^{-1}(\alpha_\omega - \alpha_\vartheta)x_\vartheta, \quad b\mapsto b, \quad \sigma({\tilde{M}_\omega})={\tilde{M}_{\vartheta}}. 
		\]
		We start by observing that by construction $(\sigma,1)\tilde{c}=\tilde{c}$; moreover, $\sigma(M_{\omega})=M_\vartheta$ and $\sigma(T)=T$.  
		It follows from Corollary \ref{cor:stabT} that, up to a scalar, the elements $[\omega]$
		and $[\vartheta]$ are conjugate under $A_c$. Lemma \ref{lem:lambda=1}(2) yields the claim.
	\end{proof}
	
	\begin{proof}[Proof of Proposition \ref{prop:ic=1}]
		The implication from left to right follows in a straightforward way from Proposition \ref{prop:LevelsTindices}. We show the opposite one holds, too. Assume that $\lL([\omega])=\lL([\vartheta])=(l,L)$ and $\lL_c([\omega])=\lL_c([\vartheta])=(l_c,L_c)$. By Lemma \ref{lemma:levelsTlevels} we have that $l\leq l_c\leq L\leq L_c$. In case $(l,L)=(l_c,L_c)$, we are done by Lemma \ref{lemma:split}. Morover, if $l<l_c< L$, then we apply Lemma \ref{lemma:(NS)l_T<L}.
		The leftover cases are $L<L_c$ and $l<l_c=L=L_c$, which we resolve using Lemma \ref{lemma:(NS)twocases}.
	\end{proof}

	\begin{proof}[Proof of Proposition \ref{prop:imcup}]
		The implication $(1)\Rightarrow(2)$ is given by Proposition \ref{prop:LevelsTindices}(3). We now prove that $(2)\Rightarrow(1)$. For this, we assume that $(\lL([\omega]),\lL_c([\omega]),i_c([\omega]))=(\lL([\vartheta]),\lL_c([\vartheta]),i_c([\vartheta]))$.
		If $[c]=0$, then $i_c([\omega])=i_c([\vartheta])=0$ and $\lL_c([\omega])=\lL([\omega])=\lL([\vartheta])=\lL_c([\vartheta])$; we conclude by applying Proposition \ref{prop:lL}.  Assume now that $[c]\neq 0$.  We note that $\lL([\omega])=(n+1,0)$ if and only if $M_{\omega}=G$, equivalently $[\omega]=0$. In particular, if $\lL([\omega])=\lL([\vartheta])=(n+1,0)$, then $[\omega]=[\vartheta]$. Assume now that $\lL([\omega])=\lL([\vartheta])\neq(n+1,0)$ and so $[\omega]$ and $[\vartheta]$ are non-trivial. We finish by applying Propositions \ref{prop:ic=0} and \ref{prop:ic=1}.
	\end{proof}

	\section{Main result and applications}\label{sec:apps}

	We devote the present section to the proof of our main Theorem \ref{th:main} and to presenting some of its applications. In Sections \ref{subsec:2gen} and \ref{subsec:3gen} we explicitly compute the orbit sizes of the action of $A$ on $\hc^2(G;\F_p)$ respectively in the cases of $2$-generated and $3$-generated abelian $p$-groups, equivalently the cases when $r=1$ resp.\ $r=2$. We remark that in such cases the sizes of orbits are polynomial in $p$. We do not discuss the case of cyclic $G$, i.e. $r=0$, as in such case $\hc^2(G;\F_p)=\hab(G;\F_p)$; see Section \ref{subsec:strongisoLongexact}.  In Section \ref{subsec:moregens}, we collect some general remarks regarding the computability of the $A$-orbits in $\hc^2(G;\F_p)$.
	Until the end of Section \ref{sec:apps}, we denote by $\cor{O}$ the collection of orbits of the action of $A$ on $\hc^2(G;\F_p)$ and by $\mathfrak{S}=(|o|)_{o\in\cor{O}}$ the vector of the orbit sizes. For a more informative presentation of the data, the vector $\mathfrak{S}$ will be decorated by vertical bars to isolate
	\begin{itemize}
		\item the vector $\mathbf{o}$ of orbits associated to elements of $\hab(G;\F_p)$,
		\item each vector of orbits derived from a fixed orbit choice in $\hab(G;\F_p)$, following the order in $\mathbf{o}$.
	\end{itemize}
	Redundant brackets are ignored in the display of $\mathfrak{S}$.
	
	\subsection{The main theorem}\label{subsec:main}
	
	The following is our main result, which gives a combinatorial description of the $A$-orbits of the $A$-stable subset $\hab(G;\F_p)\times\im\cup$ of $\hc^2(G;\F_p)$. 
	
	\begin{theorem}\label{th:main}
		Let $[c],[d]\in\hab(G;\F_p)$ and $[\omega], [\vartheta]\in\im\cup$. Then the following are equivalent:
		\begin{enumerate}[label=$(\arabic*)$]
			\item $[c]+[\omega] \sim_A [d]+[\vartheta]$, and
			\item $(\lL([c]),\lL([\omega]), \lL_c([\omega]),  i_c([\omega]))=(\lL([d]),\lL([\vartheta]), \lL_d([\vartheta]),  i_d([\vartheta]))$.
		\end{enumerate} 
	\end{theorem}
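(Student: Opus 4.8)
The plan is to exploit the $A$-stable direct sum decomposition $\hc^2(G;\F_p)=\hab(G;\F_p)\oplus\gen{\im\cup}$, which renders the action of $A$ on elements of the form $[c]+[\omega]$ ``diagonal''. Indeed, both $\hab(G;\F_p)$ (by Lemma~\ref{lem:ImBeta=Hab} and the remark following it) and $\im\cup$ (by Corollary~\ref{cor:mmap}) are $A$-stable, so for every $a\in A$ one has $a\cdot([c]+[\omega])=(a\cdot[c])+(a\cdot[\omega])$ with $a\cdot[c]\in\hab(G;\F_p)$ and $a\cdot[\omega]\in\im\cup$. Since the sum is direct, the equality $a\cdot([c]+[\omega])=[d]+[\vartheta]$ holds if and only if both $a\cdot[c]=[d]$ and $a\cdot[\omega]=[\vartheta]$ hold. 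This reduces the theorem to understanding a fibered action: first the $A$-orbit of $[c]$ in $\hab(G;\F_p)$, and then, within the stabilizer $A_c$, the orbit of $[\omega]$ in $\im\cup$.

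The implication $(1)\Rightarrow(2)$ is then a direct read-off from the compatibility results. If $a=(\sigma,\lambda)$ realizes $[c]+[\omega]\sim_A[d]+[\vartheta]$, the diagonal splitting gives $a\cdot[c]=[d]$ and $a\cdot[\omega]=[\vartheta]$. Proposition~\ref{prop:abelian} yields $\lL([c])=\lL([d])$, while Proposition~\ref{prop:LevelsTindices}(3), applied with $M=M_{\omega}$ (so that $M_a=M_{\vartheta}$ is the kernel of $a\cdot[\omega]=[\vartheta]$), yields $\lL([\omega])=\lL([\vartheta])$, $\lL_c([\omega])=\lL_d([\vartheta])$, and $i_c([\omega])=i_d([\vartheta])$. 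This is exactly condition $(2)$.

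For the converse $(2)\Rightarrow(1)$, I would match the base and then the fiber. First, from $\lL([c])=\lL([d])$ and Proposition~\ref{prop:abelian} I obtain some $a\in A$ with $a\cdot[c]=[d]$. Setting $[\omega']=a\cdot[\omega]\in\im\cup$ gives $a\cdot([c]+[\omega])=[d]+[\omega']$, and Proposition~\ref{prop:LevelsTindices}(3) transports the fiber data into the $[d]$-frame: $\lL([\omega'])=\lL([\omega])$, $\lL_d([\omega'])=\lL_c([\omega])$, and $i_d([\omega'])=i_c([\omega])$. Combining with the hypothesis $(2)$, the classes $[\omega']$ and $[\vartheta]$ share, relative to $[d]$, the full triple $(\lL,\lL_d,i_d)$. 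Second, Proposition~\ref{prop:imcup} with $[d]$ in the role of $[c]$ then gives $[\omega']\sim_{A_d}[\vartheta]$, i.e.\ there is $a'\in A_d$ with $a'\cdot[\omega']=[\vartheta]$. As $a'$ fixes $[d]$, we get $a'\cdot([d]+[\omega'])=[d]+[\vartheta]$, whence $(a'a)\cdot([c]+[\omega])=[d]+[\vartheta]$ and $(1)$ follows.

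In essence the argument is a formal assembly of Sections~\ref{section AbelianExtensions} and~\ref{sec:stabilizers}, so the substantial work is already done. The one delicate point—and the main obstacle—is the frame change for the fiber invariants: the $\im\cup$-data of $[\omega]$ is defined relative to $[c]$, whereas Proposition~\ref{prop:imcup} must be invoked over the new base point $[d]$. It is precisely Proposition~\ref{prop:LevelsTindices}(3) that guarantees these invariants are preserved under $a$ while being correctly reinterpreted with respect to $[d]$, which is what makes the two-step matching legitimate.
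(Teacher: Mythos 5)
Your proposal is correct and follows essentially the same route as the paper: both directions are assembled from Proposition~\ref{prop:abelian}, Proposition~\ref{prop:LevelsTindices} (to transport the kernels, levels and $c$-index along the element $a$ realizing $a\cdot[c]=[d]$), and Proposition~\ref{prop:imcup} applied over the new base point $[d]$. Your explicit justification of the ``diagonal'' splitting via the $A$-stable direct sum $\hab(G;\F_p)\oplus\gen{\im\cup}$ is a slightly more detailed account of what the paper encodes in the identity $a\cdot([c]+[\omega])=[d]+[\omega_a]$ of Proposition~\ref{prop:LevelsTindices}, but the argument is the same.
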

	
	\begin{proof}
		$(1)\Rightarrow(2)$ Assume that $[c]+[\omega]\sim_A [d]+[\vartheta]$ and let $a=(\sigma,\lambda)\in A$ be such that 
		$a\cdot [c]+a\cdot [\omega]=a\cdot ([c]+[\omega])=[d]+[\vartheta]$. 
		With the notation from Proposition \ref{prop:LevelsTindices}, we then have that 
		$[\vartheta]=a\cdot[\omega]=[\omega_a]$
		and thus $\lL([\omega])=\lL([\vartheta])$, $\lL_c([\omega])=\lL_d([\vartheta])$, $\lL([c])=\lL([d])$, and $i_c([\omega])=i_d([\vartheta])$. 
		
		$(2)\Rightarrow(1)$ Assume that $\lL([\omega])=\lL([\vartheta])$, $\lL_c([\omega])=\lL_d([\vartheta])$, $\lL([c])=\lL([d])$, and $i_c([\omega])=i_d([\vartheta])$. Then, thanks to Proposition \ref{prop:abelian}, there exists $a\in A$ such that $a\cdot[c]=[d]$. Fix such $a$. Then, by Proposition \ref{prop:LevelsTindices}, we have that 
		$a\cdot ([c]+[\omega])=[d]+[\omega_a]$
		and, as a consequence, also that $\lL([\vartheta])=\lL([\omega_a])$, $\lL_d([\vartheta])=\lL_d([\omega_a])$, and $i_d([\vartheta])=i_d([\omega_a])$. Now, 
		Proposition \ref{prop:imcup} yields that there exists $a'\in A_d$ such that $a'\cdot[\omega_a]=[\vartheta]$ and thus such that 
		$a'a\cdot ([c]+[\omega])=[d]+[\vartheta]$. 
	\end{proof}
	
	\noindent
	We remark that, in view of Proposition \ref{prop:abelian}, one could replace $\lL([c])$ in Theorem \ref{th:main} with any of $\ell([c])$ or $\L([c])$ and, symmetrically, $\lL([d])$ with $\ell([d])$ or $\L([d])$. 
	We explicitly compute the vectors in Theorem \ref{th:main}(2) in Sections \ref{subsec:2gen} and \ref{subsec:3gen}, in the case when $G$ has a minimal generating set of $2$ or $3$ elements, respectively. 
	It would be interesting to understand the combinatorial nature of the collection of such vectors for an arbitrary number of generators.
	
	\subsection{The case of $2$-generated groups}\label{subsec:2gen}
	
	Assume that $G=\Z/(p^{m_1})\oplus\Z/(p^{m_2})$ for positive integers $m_1\leq m_2$ and, in the case that $p=2$, assume that $m_1>1$.
	We will show that the following hold:
	\[
	|\cor{O}|=
	\begin{cases}
		4 & \textup{if }  m_1=m_2, \\
		6 & \textup{otherwise },
	\end{cases}
	\]
	and
	\[
	\mathfrak{S}=
	\begin{cases}
		(1,p^2-1\mid p-1,(p-1)(p^2-1)) & \textup{if } m_1=m_2, \\
		(1,p-1,p^2-p\mid p-1,(p-1)^2,(p-1)(p^2-p)) & \textup{otherwise}.
	\end{cases}
	\]
	Thanks to Proposition \ref{prop:abelian}, the subspace $\hab(G;\F_p)$ consists of $2$ or $3$ orbits under $A$ respectively when $m_1=m_2$ or $m_1\neq m_2$. Let now $[\omega]\in\im\cup$. Then we have that 
	\[
	M_\omega=\begin{cases}
		G &\textup{if } [\omega]=0, \\
		pG &\textup{ otherwise},
	\end{cases}
	\]
	and, in particular, $i_c([\omega])=1$ if and only if $[c]\neq 0$ and $[\omega]=0$.
	Since both $G$ and $pG$ are characteristic in $G$, it follows from Lemma \ref{lem:lambda=1} that, for each $[c]\in\hab(G;\F_p)$, the set $\im\cup$ is the union of two orbits under $A_c$ with cardinalities $1$ and $p-1$. Now, the cup product being surjective (see Section \ref{subsec:plucker}) onto $\gen{\im\cup}$, it follows that the number of orbits is twice the number of orbits in $\hab(G;\F_p)$ and their sizes are 
	\[
	\mathfrak{S}=
	\begin{cases}
		(1,p^2-1\mid p-1, (p-1)(p^2-1)) & \textup{if } m_1=m_2, \\
		(1,p-1,p^2-p\mid p-1,(p-1)^2,(p-1)(p^2-p)) & \textup{otherwise}.
	\end{cases}
	\]
	For completeness, we include the levels-indices vectors from Theorem \ref{th:main}(2).
	If  \fbox{$m_1=m_2$}, then we have
	\begin{center}
		\begin{tabular}{c|c|c}
			& $[\omega]=0$ & $[\omega]\neq 0$ \\
			\hline
			$[c]=0$ & $(m_1+1,0\mid  m_1+1,0\mid m_1+1,0\mid 0)$ & $(m_1+1,0\mid m_1,m_1\mid m_1,m_1\mid  0)$ \\
			\hline
			$[c]\neq 0$ & $(m_1,m_1\mid  m_1+1,0\mid m_1+1,0\mid 1)$ & $(m_1,m_1\mid m_1,m_1\mid m_1,m_1\mid 0)$
		\end{tabular}
	\end{center}
	while, if \fbox{$m_1\neq m_2$}, the vectors are
	\begin{center}
		\begin{tabular}{c | c| c}
			& $[\omega]=0$ & $[\omega]\neq 0$ \\
			\hline
			$[c]=0$ & $(m_2+1,0\mid m_2+1,0\mid m_2+1,0\mid 0)$ & $(m_2+1,0\mid m_2,m_2\mid m_2,m_2\mid  0)$ \\
			\hline
			$[c]=\beta(\gamma_1^*)$ & $(m_1,m_1\mid  m_2+1,0\mid m_2+1,0\mid 1)$ & $(m_1,m_1\mid m_1,m_2\mid m_2,m_2\mid 0)$ \\
			\hline
			$[c]=\beta(\gamma_2^*)$ & $(m_2,m_2\mid  m_2+1,0\mid m_2,0\mid 1)$ & $(m_2,m_2\mid m_1,m_2\mid m_1,m_1\mid 0)$
		\end{tabular}
	\end{center}

	\subsection{The case of $3$-generated groups}\label{subsec:3gen}
	
	Assume that $G=\Z/(p^{m_1})\oplus\Z/(p^{m_2})\oplus\Z/(p^{m_3})$ where $m_1\leq m_2\leq m_3$ are positive integers with the additional condition that, if $p=2$, then $m_1>1$. We will show that the following hold: 
	\[
	\mid \cor{O}\mid =
	\begin{cases}
		5 & \textup{if }  m_1=m_2=m_3, \\
		11 & \textup{if } m_1<m_2=m_3, \\
		11 & \textup{if } m_1=m_2<m_3, \\
		19 & \textup{if } m_1<m_2<m_3.
	\end{cases}
	\]
	We will, additionally, give  the orbit sizes in each of the listed cases. For this, note that, as a consequence of Proposition \ref{prop:abelian}, the sizes of the $A$-orbits of $\hab(G;\F_p)$ are
	\[
	\mathfrak{S}_{\mathrm{ab}}=
	\begin{cases}
		(1,p^3-1) & \textup{if }  m_1=m_2=m_3, \\
		(1,p-1,p^3-p) & \textup{if } m_1<m_2=m_3, \\
		(1,p^2-1,p^3-p^2) & \textup{if } m_1=m_2<m_3, \\
		(1,p-1,p^2-p,p^3-p^2) & \textup{if } m_1<m_2<m_3.
	\end{cases}
	\]
	We proceed by looking at the specific cases, one by one. For this, observe that $\im\cup =\gen{\im\cup}$ and $\dim_{\F_p}\im\cup=3$; see Sections \ref{subsec:strongisoLongexact} and \ref{subsec:plucker}.
	\vspace{5pt}\\
	\noindent
	We start by assuming that \fbox{$m_1=m_2=m_3$}. 
	Let $[c]\in\graffe{0,\beta(\gamma_1^*)}$ and write $[\omega]$ for a generic element in $\im\cup$. 
	Then, following the notation in Theorem \ref{th:main}(2), we obtain the following possible values parametrizing the $A$-orbits in $\hc^2(G;\F_p)$:
	\begin{center}
		\begin{tabular}{c| c| c}
			& $[0]$ & $[\omega]\neq 0$ \\
			\hline
			$[c]=0$ & $(m_1+1,0\mid m_1+1,0\mid m_1+1,0\mid 0)$ & $(m_1+1,0\mid m_1,m_1\mid m_1+1,0\mid 0)$ \\
			\hline
			$[c]\neq 0$ & $(m_1,m_1\mid m_1+1,0\mid m_1+1,0\mid 1)$ & $(m_1,m_1\mid m_1,m_1\mid m_1,m_1 \mid 0)$\\
			&& $(m_1,m_1\mid m_1,m_1\mid m_1,m_1 \mid 1)$
		\end{tabular}
	\end{center}
	In particular, $\im\cup\setminus\graffe{0}$ consists of a unique $A$-orbit of cardinality $p^3-1$. Assume now that $[c]=\beta(\gamma_1^*)$. In this case, we obtain
	\begin{itemize}
		\item $\cor{I}_0=\graffe{ [\omega]\in \im\cup\setminus \graffe{0} : i_c([\omega])=0}=\graffe{\lambda_1 [v_{11}^*\cup v_{21}^*]+\lambda_2[v_{11}^*\cup v_{31}^*] : \lambda_i\in\F_p, (\lambda_1,\lambda_2)\neq(0,0)},$
		\item $\cor{I}_1=\graffe{ [\omega]\in \im\cup\setminus \graffe{0} : i_c([\omega])=1}=\graffe{\lambda_1[v_{11}^*\cup v_{21}^*]+\lambda_2[v_{11}^*\cup v_{31}^*]+\lambda_3[v_{21}^*\cup v_{31}^*] : \lambda_i\in\F_p,\, \lambda_3\neq 0}.$
	\end{itemize}
	It follows that $|\cor{I}_0|=p^2-1$ and $|\cor{I}_1|=p^3-p^2$ and thus Proposition \ref{prop:imcup} yields that 
	\[
	\mathfrak{S}=(1,p^3-1 \mid p^3-1\mid (p^3-1)(p^2-1),(p^3-1)(p^3-p^2)).
	\]
	Assume now that \fbox{$m_1<m_2=m_3$}. Define $[c_1]=\beta(\gamma_1^*)$ and $[c_2]=\beta(\gamma_2^*)$. Write, moreover, $[\omega]$ for a generic element in $\im\cup$. 
	Then, following the notation in Theorem \ref{th:main}(2), the values parametrizing the $A$-orbits in $\hc^2(G;\F_p)$ are collected below:
	\begin{center}
		\begin{tabular}{c | c| c}
			& $[\omega]=0$ & $[\omega]\neq 0$ \\
			\hline
			$[c]=0$ & $(m_2+1,0 \mid m_2+1,0 \mid m_2+1,0 \mid 0)$ & $(m_2+1,0\mid m_1,m_2 \mid m_1,m_2 \mid  0)$ \\
			&& $(m_2+1,0\mid m_2,m_2 \mid m_2,m_2\mid  0)$ \\
			\hline
			$[c_1]=\beta(\gamma_1^*)$ & $(m_1,m_1\mid m_2+1,0 \mid m_2+1,0\mid 1)$ & $(m_1,m_1\mid m_1,m_2 \mid m_2,m_2\mid 0)$ \\
			&& $(m_1,m_1\mid m_1,m_2 \mid m_2,m_2\mid 1)$ \\
			&& $(m_1,m_1\mid m_2,m_2 \mid m_2,m_2\mid 1 )$ \\
			\hline
			$[c_2]=\beta(\gamma_2^*)$ & $(m_2,m_2\mid m_2+1,0 \mid m_2+1,0 \mid 1 )$ & $(m_2,m_2 \mid m_1,m_2 \mid m_1,m_1 \mid 0)$ \\
			&& $(m_2,m_2 \mid m_1,m_2 \mid m_1,m_2\mid 1)$ \\
			&& $(m_2,m_2 \mid m_2,m_2 \mid  m_2,m_2\mid 0)$
		\end{tabular}
	\end{center}
	Note also that the two $A$-orbits in $\im \cup\setminus\{0\}$ are represented by $[v_{11}^*\cup v_{21}^*]$ and $[v_{21}^*\cup v_{22}^*]$ and correspond respectively to the $G$-levels $(m_1,m_2)$ and $(m_2,m_2)$.
	It is a straightforward computation to show that the following hold:
	\begin{align*}
		\cor{I}^1_0(m_1,m_2)&=\graffe{ [\omega]\in \im\cup\setminus \graffe{0} : \lL([\omega])=(m_1,m_2),  i_{c_1}([\omega])=0} \\
		&=\graffe{\lambda_1 [v_{11}^*\cup v_{21}^*]+\lambda_2[v_{11}^*\cup v_{22}^*] : \lambda_i\in\F_p, (\lambda_1,\lambda_2)\neq(0,0)}, \\
		\cor{I}^1_1(m_1,m_2)&=\graffe{ [\omega]\in \im\cup\setminus \graffe{0} :\lL([\omega])=(m_1,m_2), i_{c_1}([\omega])=1} \\
		&=\graffe{\lambda_1[v_{11}^*\cup v_{21}^*]+\lambda_2[v_{11}^*\cup v_{22}^*]+\lambda_3[v_{21}^*\cup v_{22}^*] : \lambda_i\in\F_p,\, (\lambda_1,\lambda_2)\neq(0,0), \lambda_3\neq 0}, \\
		\cor{I}^1_1(m_2,m_2)& =\graffe{ [\omega]\in \im\cup\setminus \graffe{0} : \lL([\omega])=(m_2,m_2), i_{c_1}([\omega])=1}\\
		&=\graffe{\lambda_3[v_{21}^*\cup v_{22}^*] : \lambda_3\in\F_p,\, \lambda_3\neq 0}, \\
		\cor{I}^2_0(m_1,m_2) & =\graffe{ [\omega]\in \im\cup\setminus \graffe{0} : \lL([\omega])=(m_1,m_2), i_{c_2}([\omega])=0} \\
		& =\graffe{\lambda_1 [v_{11}^*\cup v_{21}^*]+\lambda_2[v_{21}^*\cup v_{22}^*] : \lambda_i\in\F_p, \lambda_1\neq 0}, \\
		\cor{I}^2_0(m_2,m_2) & =\graffe{ [\omega]\in \im\cup\setminus \graffe{0} : \lL([\omega])=(m_2,m_2), i_{c_2}([\omega])=0}\\
		& =\graffe{\lambda_2[v_{21}^*\cup v_{22}^*] : \lambda_2\in\F_p, \lambda_2\neq 0}, \\
		\cor{I}^2_1(m_1,m_2) &=\graffe{ [\omega]\in \im\cup\setminus \graffe{0} :\lL([\omega])=(m_1,m_2), i_{c_2}([\omega])=1}\\
		&=\graffe{\lambda_1[v_{11}^*\cup v_{21}^*]+\lambda_2[v_{21}^*\cup v_{22}^*]+\lambda_3[v_{11}^*\cup v_{22}^*] : \lambda_i\in\F_p,\, \lambda_3\neq 0}.
	\end{align*}
	It follows that 
	\begin{alignat*}{3}
		|\cor{I}^1_0(m_1,m_2)| &=p^2-1,\quad |\cor{I}^1_1(m_2,m_2)| &= p-1, \quad   |\cor{I}^1_1(m_1,m_2)|& = p^3-p^2-p+1,  \\
		|\cor{I}^2_0(m_1,m_2)|& = p^2-p, \quad   |\cor{I}^2_0(m_2,m_2)|& = p-1, \quad  |\cor{I}^2_1(m_1,m_2)|& =p^3-p^2,
	\end{alignat*}
	and so we derive from our table of possibilities 
	and Theorem \ref{th:main} that 
	\begin{align*}
		\mathfrak{S} =(&1, p-1, p^3-p\mid p^3-p, p-1 \mid  \\
		& (p-1)(p^2-1), (p-1)^2,  (p-1)(p^3-p^2-p+1) \mid  \\
		& (p^3-p)(p^2-p), (p^3-p)(p-1), (p^3-p)(p^3-p^2)).
	\end{align*}
	We have developed the current case in full detail to show how Theorem \ref{th:main} yields the orbit count. One can compute the orbit sizes in the remaining cases in a similar manner and so we present them in a slightly more synthetic way.
	\vspace{5pt}\\
	\noindent
	Assume that \fbox{$m_1=m_2<m_3$}. 
	Write $[c_1]=\beta(\gamma_1^*)$ and $[c_2]=\beta(\gamma_2^*)$. We also write $[\omega]$ for a generic element in $\im\cup$. 
	Then, following the notation in Theorem \ref{th:main}(2), the values parametrizing the $A$-orbits in $\hc^2(G;\F_p)$ are listed in the next table:
	\begin{center}
		\begin{tabular}{c | c| c}
			& $[\omega]=0$ & $[\omega]\neq 0$ \\
			\hline
			$[c]=0$ & $( m_3+1,0\mid m_3+1,0  \mid m_3+1  \mid 0)$ & $(m_3+1,0\mid m_1,m_1  \mid m_1,m_1  \mid  0)$ \\
			&& $( m_3+1,0\mid m_1,m_3 \mid m_1,m_3 \mid  0)$ \\
			\hline
			$[c_1]=\beta(\gamma_1^*)$ & $(m_1,m_1\mid m_3+1,0 \mid m_3+1, 0 \mid 1)$ & $(m_1,m_1\mid m_1,m_1  \mid m_1,m_1 \mid 0 )$ \\
			&& $(m_1,m_1\mid m_1,m_1  \mid m_1,m_3  \mid 1 )$\\
			&& $(m_1,m_1\mid m_1,m_3  \mid m_3,m_3 \mid 0)$ \\
			&& $(m_1,m_1\mid m_1,m_3 \mid m_1,m_3 \mid 1 )$ \\
			\hline
			$[c_2]=\beta(\gamma_2^*)$ & $(m_3,m_3\mid m_3+1,0 \mid m_3,0 \mid 1 )$ & $(m_3,m_3 \mid m_1,m_1  \mid m_1,m_1  \mid 1)$ \\
			&& $(m_3,m_3 \mid m_1,m_3 \mid m_1,m_1 \mid 0)$ 
		\end{tabular}
	\end{center}
	We observe that the two $A$-orbits in $\im \cup\setminus\{0\}$ are represented by $[v_{11}^*\cup v_{12}^*]$ and $[v_{11}^*\cup v_{21}^*]$ and correspond respectively to the $G$-levels $(m_1,m_1)$ and $(m_1,m_3)$: these orbits have sizes respectively 
	$p^2-1$ and $p^3-p^2$. Analogously to the previous case, one can compute that
	\begin{align*}
		\mathfrak{S} =(&1, p^2-1, p^3-p^2\mid p^3-p^2,p^2-1 \mid  \\
		& (p^2-1)(p^2-p),(p^2-1)(p^3-2p^2+p), (p^2-1)(p-1),(p^2-1)(p^2-p) \mid  \\
		&(p^3-p^2)^2,(p^3-p^2)(p^2-1) ).
	\end{align*}
	We conclude with the case \fbox{$m_1<m_2<m_3$}. Write $[c_1]=\beta(\gamma_1^*)$,  $[c_2]=\beta(\gamma_2^*)$, and $[c_3]=\beta(\gamma_3^*)$. 
	Analogously to the previous cases, we collect the possible levels-indices vectors from Theorem \ref{th:main}(2) in the next table:
	\begin{center}
		\begin{tabular}{c | c| c}
			& $[\omega]=0$ & $[\omega]\neq 0$ \\
			\hline
			$[c]=0$ & $( m_3+1,0\mid m_3+1,0  \mid m_3+1  \mid 0)$ & $(m_3+1,0\mid m_1,m_2  \mid m_1,m_2  \mid  0)$ \\
			&& $( m_3+1,0\mid m_1,m_3 \mid m_1,m_3 \mid  0)$ \\
			&& $( m_3+1,0\mid m_2,m_3 \mid m_2,m_3 \mid  0)$ \\
			\hline
			$[c_1]=\beta(\gamma_1^*)$ & $(m_1,m_1\mid m_3+1,0\mid m_3+1,0 \mid 1)$ & $(m_1,m_1\mid m_1,m_2  \mid m_2,m_2 \mid  0)$ \\
			&& $(m_1,m_1\mid m_1,m_2  \mid m_2,m_3  \mid 1)$ \\
			&& $(m_1,m_1\mid m_1,m_3  \mid m_3,m_3 \mid 0)$ \\
			&& $(m_1,m_1\mid m_1,m_3 \mid m_2,m_3 \mid  1)$ \\
			&& $(m_1,m_1\mid m_2,m_3 \mid m_2,m_3 \mid  1)$ \\
			\hline
			$[c_2]=\beta(\gamma_2^*)$ & $(m_2,m_2\mid m_3+1,0 \mid  m_3+1,0\mid 1 )$ & $( m_2,m_2\mid m_1,m_2  \mid m_1,m_1  \mid 0)$ \\
			&& $( m_2,m_2\mid m_1,m_2  \mid m_1,m_3  \mid 1)$ \\
			&& $( m_2,m_2\mid m_1,m_3  \mid m_1,m_3  \mid 1)$ \\
			&& $( m_2,m_2\mid m_2,m_3  \mid m_3,m_3                                                                                                                                                                                                                                                                                                      \mid 0)$ \\
			\hline
			$[c_3]=\beta(\gamma_3^*)$ & $(m_3,m_3\mid m_3+1,0 \mid m_3,0 \mid 1)$ & $(m_3,m_3\mid m_1,m_2  \mid m_1,m_2 \mid 1 )$ \\
			&& $(m_3,m_3\mid  m_1,m_3 \mid m_1,m_1 \mid  0)$ \\
			&& $(m_3,m_3\mid  m_2,m_3 \mid m_2,m_2 \mid 0 )$ 
		\end{tabular}
	\end{center}
	Representatives of the $A$-orbits of $\im\cup\setminus\graffe{0}$ are $[v_{11}^*\cup v_{21}^*]$, $[v_{11}^*\cup v_{31}^*]$, and $[v_{21}^*\cup v_{31}^*]$ corresponding respectively to the levels $(m_1,m_2)$, $(m_1,m_3)$, and $(m_2,m_3)$.
	As a consequence, one computes that
	\begin{align*}
		\mathfrak{S} =(&1,p-1, p^2-p, p^3-p^2 \mid p^3-p^2, p^2-p,p-1  \mid  \\
		& (p-1)^2p,(p-1)^3p,(p-1)^2,(p-1)^3,(p-1)^2 \mid  \\
		& (p^2-p)^2,(p^2-p)(p^3-2p^2+p),(p^2-p)^2,(p^2-p)(p-1) \mid \\
		& (p^3-p^2)^2,(p^3-p^2)(p^2-p),(p^3-p^2)(p-1)).
	\end{align*}

	\subsection{Higher number of generators}\label{subsec:moregens}
	
	In Sections \ref{subsec:2gen} and \ref{subsec:3gen}, we have made use of Theorem \ref{th:main} to compute the orbit sizes of the action of $A$ on $\hc^2(G;\F_p)$. As the careful reader might have observed, however, we did not need the full information from the vectors in Theorem \ref{th:main}(2) to exploit the cases of $2$- and $3$-generated groups. In the case of $2$-generated groups, the $c$-levels and $c$-index can always be derived from the knowledge of $\lL([c])$ and $\lL([\omega])$ because $V$ has dimension $2$. In the case of $3$-generated groups, the knowledge of the vector $(\lL([c]),\lL([\omega]),i_c([\omega]))$ suffices for the computation of $\lL_c([\omega])$ because $V$ has only dimension $3$. 
	When $G$ requires a generating set of larger cardinality, the full information carried by the vectors described in Theorem \ref{th:main}(2) is needed, as the following example shows. 
	
	\begin{ex}
		Assume $p$ is odd and $G$ is given by
		\[
		G= \Z/(p)\oplus (\Z/(p^2))^2\oplus \Z/(p^3)\oplus(\Z/(p^4))^2=\langle \gamma_{11}, \gamma_{21},\gamma_{22},\gamma_{31},\gamma_{41},\gamma_{42}\rangle.
		\]
		Let moreover $T,M,M'$ be subgroups of $G$ given by
		\begin{align*}
			T&=\langle \gamma_{11},\gamma_{22},\gamma_{31},\gamma_{41}, \gamma_{42} \rangle+pG, \\
			M&=\langle \gamma_{11},\gamma_{21}-\gamma_{31},\gamma_{22},\gamma_{42} \rangle +pG,\\
			M'&=\langle \gamma_{11}, \gamma_{21}, \gamma_{31}, \gamma_{41} \rangle+pG,
		\end{align*}
		and observe that $T$ is maximal in $G$, while $G/M$ and $G/M'$ are elementary abelian of rank $2$. Additionally, we have that $\lL(M)=(2,4)=\lL(M')$ and 
		\begin{align*}
			M\cap T&=\langle \gamma_{11},\gamma_{22},\gamma_{42} \rangle+pG  \textup{ and }  M'\cap T=\langle \gamma_{11},\gamma_{31},\gamma_{41} \rangle+pG
		\end{align*} 
		and so, in particular, $M$ and $M'$ are not contained in $T$. Equivalently, if $[c]\in\hab(G;\F_p)$ represents $T$ via \eqref{eq:TauMap}, then $i_c(M)=i_c(M')=1$.
		Nevertheless, the $T$-levels of $M$ and $M'$ do not coincide: indeed one can compute
		$\lL_T(M)=(3,4)\neq(2,4)=\lL_T(M').$
	\end{ex}
	
	\noindent
	We close the current section and the paper with some observations concerning the determination of the $A$-orbits in $\hc^2(G;\F_p)$ for arbitrary $G$. Our main theorem allows us to compute the orbits contained in $\hab(G;\F_p)\times\im\cup$, which is -- for $\dG(G)\geq 4$ -- a proper subset of $\hc^2(G;\F_p)$. A key ingredient in the proof of Theorem \ref{th:main} is 
	Corollary \ref{cor:stabT} and we are confident that a generalization of it to elements of higher rank in $\mathbb{P}\gen{\im\cup}$ will yield a description of the orbits of $\hc^2(G;\F_p)$. 
	Such a generalization will most likely build upon the
	geometry of $\mathbb{P}(\Lambda^2V)$, which is very well-understood, via identifying its elements with equivalence classes of tuples of subspaces of $V$. We hope to come back to this interesting problem in a future paper.

\providecommand{\bysame}{\leavevmode\hbox to3em{\hrulefill}\thinspace}
\providecommand{\MR}{\relax\ifhmode\unskip\space\fi MR }
\providecommand{\MRhref}[2]{%
  \href{http://www.ams.org/mathscinet-getitem?mr=#1}{#2}
}
\providecommand{\href}[2]{#2}

	\vspace*{2em}
	\noindent
	{\footnotesize
		\begin{minipage}[t]{0.43\textwidth}
			Oihana Garaialde Oca\~na \\
			Matematika Saila, 
			Euskal Herriko Unibertsitatearen Zientzia eta Teknologia Fakultatea \\
			Posta-kutxa 644 \\
			48080 Bilbo \\
			Spain \\
			\quad\\
			E-mail:  \href{mailto:oihana.garayalde@ehu.eus}{oihana.garayalde@ehu.eus}
		\end{minipage}
		\hfill
		\begin{minipage}[t]{0.36\textwidth}
			Mima Stanojkovski\\
			Universit\`a di Trento\\
			Dipartimento di Matematica\\
			via Sommarive 14\\
			38123 Povo di Trento\\
			Italia\\
			\quad\\
			E-mail: \href{mailto:mima.stanojkovski@unitn.it}{mima.stanojkovski@unitn.it}
		\end{minipage}
	}

\end{document}